\numberwithin{equation}{section}
\newtheorem{theorem}{Theorem}[section]
\newtheorem{proposition}[theorem]{Proposition}
\newtheorem{lemma}[theorem]{Lemma}
\theoremstyle{definition}
\newtheorem{remark}[theorem]{Remark}
\begin{document}
\title
 [Schr\"{o}dinger-Poisson equations with critical Sobolev exponents]
 {Standing waves for a class of Schr\"{o}dinger-Poisson equations in ${\mathbb{R}^3}$ involving critical Sobolev exponents*}\footnotetext{*This work was supported by Natural Science Foundation of China (Grant No. 11371159), Hubei Key Laboratory of Mathematical Sciences and Program for Changjiang Scholars and Innovative Research Team in University $\# $ IRT13066.}

\maketitle
 \begin{center}
\author{Yi He}
and
\author{Gongbao Li}\footnote{Corresponding Author: Gongbao Li. Email addresses: heyi19870113@163.com (Y. He), ligb@mail.ccnu.edu.cn(G. Li).}
\end{center}

\begin{center}
\address{Hubei Key Laboratory of Mathematical Sciences and School of Mathematics and Statistics, Central China
Normal University, Wuhan, 430079, P. R. China }
\end{center}

\maketitle

\begin{abstract}
We are concerned with the following Schr\"{o}dinger-Poisson equation with critical nonlinearity:
\[
\left\{ \begin{gathered}
   - {\varepsilon ^2}\Delta u + V(x)u + \psi u = \lambda |u{|^{p - 2}}u + |u{|^4}u{\text{ in }}{\mathbb{R}^3}, \hfill \\
   - {\varepsilon ^2}\Delta \psi  = {u^2}{\text{ in }}{\mathbb{R}^3},{\text{ }}u > 0,{\text{ }}u \in {H^1}({\mathbb{R}^3}), \hfill \\
\end{gathered}  \right.
\]
where $\varepsilon > 0$ is a small positive parameter, $\lambda  > 0$, $3 < p \le 4$. Under certain assumptions on the potential $V$, we construct a family of positive solutions ${u_\varepsilon } \in {H^1}({\mathbb{R}^3})$ which concentrates around a local minimum of $V$ as $\varepsilon  \to 0$.

Although, subcritical growth Schr\"{o}dinger-Poisson equation
\[
\left\{ \begin{gathered}
   - {\varepsilon ^2}\Delta u + V(x)u + \psi u = f(u){\text{ in }}{\mathbb{R}^3}, \hfill \\
   - {\varepsilon ^2}\Delta \psi  = {u^2}{\text{ in }}{\mathbb{R}^3},{\text{ }}u > 0,{\text{ }}u \in {H^1}({\mathbb{R}^3}) \hfill \\
\end{gathered}  \right.
\]
has been studied extensively, where the assumption for $f(u)$ is that $f(u) \sim |u{|^{p - 2}}u$ with $4 < p < 6$ and satisfies the Ambrosetti-Rabinowitz condition which forces the boundedness of any Palais-Smale sequence of the corresponding energy functional of the equation. The more difficult critical case is studied in this paper. As $g(u): = \lambda {| u |^{p - 2}}u + {| u |^4u}$ with $3<p\leq 4$ does not satisfy the  Ambrosetti-Rabinowitz condition ($\exists \mu  > 4, 0 < \mu \int_0^u {g(s)ds \le g(u)u}$), the boundedness of Palais-smale sequence becomes a major difficulty in proving the existence of a positive solution. Also, the fact that the function $\frac{{g(s)}}
{{{s^3}}}$ is not increasing for $s > 0$ prevents us from using the Nehari manifold directly as usual. The main result we obtained in this paper is new.

{\bf Key words }: existence; concentration; Schr\"{o}dinger-Poisson equation; critical growth.

{\bf 2010 Mathematics Subject Classification }: Primary 35J20, 35J60, 35J92
\end{abstract}

\maketitle

\section{Introduction and Main Result}

\setcounter{equation}{0}

In this paper, we study the following Schr\"{o}dinger-Poisson equation with critical nonlinearity:
\begin{equation}\label{1.1}
\left\{ \begin{gathered}
   - {\varepsilon ^2}\Delta u + V(x)u + \psi u = \lambda |u{|^{p - 2}}u + |u{|^4}u{\text{ in }}{\mathbb{R}^3}, \hfill \\
   - {\varepsilon ^2}\Delta \psi  = {u^2}{\text{ in }}{\mathbb{R}^3},{\text{ }}u > 0,{\text{ }}u \in {H^1}({\mathbb{R}^3}), \hfill \\
\end{gathered}  \right.
\end{equation}
where $\varepsilon > 0 $ is a small positive parameter, $\lambda  > 0$, $3 < p \le 4$. We assume that the potential $V$ satisfies:\\
$(V_1)$ $V \in C({\mathbb{R}^3},\mathbb{R})$ and $\mathop {\inf }\limits_{x \in {\mathbb{R}^3}} V(x) = \alpha  > 0$;\\
$(V_2)$ There is a bounded domain $\Lambda $ such that
\[
{V_0}: = \mathop {\inf }\limits_\Lambda  V < \mathop {\min }\limits_{\partial \Lambda } V.
\]
We also set $\mathcal{M}: = \{ x \in \Lambda :V(x) = {V_0}\} $. Without loss of generality, we may assume that $0 \in \mathcal{M}$.

Problem \eqref{1.1} is a variant of the following Schr\"{o}dinger-Poisson problem
\begin{equation}\label{1.2}
\left\{ \begin{gathered}
  \frac{{{\hbar ^2}}}
{{2m}}\Delta v - v - \omega \phi v + f(v) = 0{\text{ in }}{\mathbb{R}^3}, \hfill \\
  \Delta \phi  + 4\pi \omega {v^2} = 0{\text{ in }}{\mathbb{R}^3}, \hfill \\
  v,\phi  > 0,{\text{ }}v,\phi  \to 0{\text{ as }}|x| \to \infty , \hfill \\
\end{gathered}  \right.
\end{equation}
where $\hbar ,m,\omega  > 0$, $v,\phi :{\mathbb{R}^3} \to \mathbb{R}$, $f:\mathbb{R} \to \mathbb{R}$. This equation arises in Quantum Mechanics: in 1998, V. Benci and D. Fortunato \cite{bf} firstly introduced it as a model to describe the interaction of a charged particle with the electrostatic field. In \eqref{1.2}, $m$ denotes the mass of the particle, $\omega$ denotes the electric charge and $\hbar$ is a constant which is known under the name of Planck's constant. The unknowns of the equation are the wave function $v$ associated to the particle and the electric potential $\phi$. The presence of the nonlinear term $f(v)$ simulates the interaction effect among many particles.

In the last years, there has been a great deal of works dealing with the Schr\"{o}dinger-Poisson equations by means of variational tools.

V. Benci and D. Fortunato \cite{bf} considered the eigenvalue problem for \eqref{1.2} of the following form
\begin{equation}\label{1.3}
\left\{ \begin{gathered}
   - \frac{1}
{2}\Delta u - \phi u = \omega u{\text{ in }}\Omega {\text{,}} \hfill \\
  \Delta \phi  = 4\pi {u^2}{\text{ in }}\Omega {\text{,}} \hfill \\
  u(x) = 0,{\text{ }}\phi (x) = g{\text{ on }}\partial \Omega ,{\text{ }}{\left\| u \right\|_{{L^2}(\Omega )}} = 1,{\text{ }}\omega  > 0, \hfill \\
\end{gathered}  \right.
\end{equation}
where $\Omega $ is a bounded set in ${\mathbb{R}^3}$ and $g$ is a smooth function on the closure ${\bar \Omega }$. They used a constrained minimization argument to show that, there is a sequence $({\omega _n},{u_n},{\phi _n})$ with $\{ {\omega _n}\}  \subset \mathbb{R}$, ${\omega _n} \to \infty $ and ${u_n}$, ${\phi _n}$ real functions, solving \eqref{1.3}.

D. Ruiz \cite{r1} considered the following Schr\"{o}dinger-Poisson equation:
\begin{equation}\label{1.4}
\left\{ \begin{gathered}
   - \Delta u + u + \lambda \phi u = {u^{p - 1}}{\text{ in }}{\mathbb{R}^3}, \hfill \\
   - \Delta \phi  = {u^2}{\text{ in }}{\mathbb{R}^3}, \hfill \\
\end{gathered}  \right.
\end{equation}
where $\lambda  > 0$ is a positive parameter and $2 < p < 6$. Ruiz proved that when $2 < p < 3$ (respectively $p=3$), \eqref{1.4} has at least two (respectively one) positive solutions for $\lambda  > 0$ small by using the Mountain-Pass theorem (see \cite{ar}) and Ekeland's variational principle (see \cite{e}) and \eqref{1.4} has no nontrivial solution if $2 < p \le 3$, $\lambda  > \frac{1}
{4}$. For the case $3 < p < 6$, it was shown in \cite{r1} that there is a positive radial nontrivial solution to \eqref{1.4} by using the constrained minimization method on a new manifold which is obtained by combining
the usual Nehari manifold and the Pohozaev's identity.

A. Azzollini, P. d'Avenia and A. Pomponio \cite{adp} used a technique due to L. Jeanjean (\cite{j} Theorem 1.1) to show that the equation
\[\left\{ \begin{gathered}
   - \Delta u + q\phi u = g(u){\text{ in }}{\mathbb{R}^3}, \hfill \\
   - \Delta \phi  = q{u^2}{\text{ in }}{\mathbb{R}^3} \hfill \\
\end{gathered}  \right.\]
has a nontrivial positive radial solution $(u,\phi ) \in {H^1}({\mathbb{R}^3}) \times {D^{1,2}}({\mathbb{R}^3})$ for $q > 0$ small where the nonlinear term $g$ satisfies :\\
$(g_1)$ $g \in C(\mathbb{R},\mathbb{R})$;\\
$(g_2)$ $ - \infty  < \mathop {\underline {\lim } }\limits_{s \to {0^ + }} g(s)/s \le \mathop {\overline {\lim } }\limits_{s \to {0^ + }} g(s)/s =  - m < 0$;\\
$(g_3)$ $ - \infty  \le \mathop {\overline {\lim } }\limits_{s \to  + \infty } g(s)/{s^5} \le 0$;\\
$(g_4)$ $\exists \xi  > 0$ such that
\[
G(\xi ): = \int_0^\xi  {g(s)} ds > 0.
\]
Note that the hypotheses on $g$ was firstly introduced by H. Berestycki and P. L. Lions, in their celebrated paper \cite{bl1}.

D. Mugnai \cite{m} proved that for any $\omega  > 0$, there exist $\lambda  > 0$ such that the following Schr\"{o}dinger-Poisson equation
\begin{equation}\label{1.5}
\left\{ \begin{gathered}
   - \Delta u + \omega u - \lambda u\phi  + {W_u}(x,u) = 0{\text{ in }}{\mathbb{R}^3}, \hfill \\
   - \Delta \phi  = {u^2}{\text{ in }}{\mathbb{R}^3} \hfill \\
\end{gathered}  \right.
\end{equation}
has a nontrivial radial function $(u,\phi ) \in {H^1}({\mathbb{R}^3}) \times {D^{1,2}}({\mathbb{R}^3})$ by using the minimization argument on an appropriate manifold when the nonlinear term $W:{\mathbb{R}^3} \times \mathbb{R} \to \mathbb{R}$ satisfies:\\
$(W_1)$ $W:{\mathbb{R}^3} \times \mathbb{R} \to [0,\infty )$ is such that the derivative ${W_u}:{\mathbb{R}^3} \times \mathbb{R} \to \mathbb{R}$ ia a Carath\'{e}odory function, $W(x,s) = W(|x|,s)$ for a.e. $x \in {\mathbb{R}^3}$ and for every $s \in \mathbb{R}$, and $W(x,0) = {W_u}(x,0) = 0$ for a.e. $x \in {\mathbb{R}^3}$;\\
$(W_2)$ $\exists {C_1},{C_2} > 0$ and $1 < q < p < 5$ such that $|{W_u}(x,s)| \le {C_1}|s{|^q} + {C_2}|s{|^p}$ for every $s \in \mathbb{R}$ and a.e. $x \in {\mathbb{R}^3}$;\\
$(W_3)$ $\exists k \ge 2$ such that $0 \le s{W_u}(x,s) \le kW(x,s)$ for every $s \in \mathbb{R}$ and a.e. $x \in {\mathbb{R}^3}$.

Recently, Y. Jiang and H. Zhou \cite{jz} studied the Schr\"{o}dinger-Poisson equation
\begin{equation}\label{1.6}
\left\{ \begin{gathered}
   - \Delta u + (1 + \mu g(x))u + \lambda \phi u = |u{|^{p - 2}}u{\text{ in }}{\mathbb{R}^3}, \hfill \\
   - \Delta \phi  = {u^2}{\text{ in }}{\mathbb{R}^3},{\text{  }}\mathop {\lim }\limits_{|x| \to \infty } \phi (x) = 0, \hfill \\
\end{gathered}  \right.
\end{equation}
where $\lambda$, $\mu $ are positive parameters, $p \in (2,6)$, $g(x) \in {L^\infty }({\mathbb{R}^3})$ is
nonnegative, $g(x) \equiv 0$ on a bounded domain in ${\mathbb{R}^3}$ and $\mathop {\lim }\limits_{|x| \to \infty } g(x) = 1$. They used a priori estimate and approximation methods to show that \eqref{1.6} with $p \in (2,3)$ has a ground state solution if $\mu$ large and $\lambda$ small. Meanwhile, they also proved that \eqref{1.6} with $p \in [4,6)$ has a nontrivial solution for any $\lambda > 0$
and $\mu$ large.

As far as we know, there is no result on the existence of positive ground state solutions for \eqref{1.4} when the nonlinearity ${u^{p - 1}}(2 < p < 6)$ is replaced by $\lambda |u{|^{p - 2}}u + |u{|^4}u(3 < p \le 4)$. In this paper, we will fill this gap.

We note that problem \eqref{1.2} with  $\omega  = 0$ and $\frac{{{\hbar ^2}}}
{{2m}} = 1$ is motivated by the search for standing wave solutions  for the nonlinear Schr\"{o}dinger equation, which is one of the main subjects in nonlinear analysis. Different approaches have been taken to deal with this problem under various hypotheses on the potentials and the nonlinearities (see \cite{bl1,bl2} and so on).

Our motivation to study \eqref{1.1} mainly comes from the results of perturbed Schr\"{o}dinger equations, i.e.
\begin{equation}\label{1.7}
 - {\varepsilon ^2}\Delta u + V(x)u = |u{|^{q - 2}}u,{\text{ }}x \in {\mathbb{R}^N},
\end{equation}
where $2 < q < 2N/(N - 2)$, $N \ge 1$.

Many mathematicians proved the existence, concentration and multiplicity of solutions for \eqref{1.7}.

A. Floer and A. Weinstein \cite{fw} studied \eqref{1.7} in the case where $N = 1$, $q = 4$, $V \in {L^\infty }$ with $\inf V > 0$. They construct a single peak solution which concentrates around any given non-degenerate critical point of the potential $V$. Y. G. Oh \cite{o1,o2} extended this result in higher dimensions when $2 < q < 2N/(N - 2)$ and the potential $V$ belongs to a Kato class which means that $V$ satisfies the following condition:
\[
{(V)_a}:V \equiv a{\text{ or }}V > a{\text{ and }}{(V - a)^{ - \frac{1}
{2}}} \in {\text{Lip}}({\mathbb{R}^N}){\text{ for some }}a \in \mathbb{R}.
\]
Furthermore, Y. G. Oh \cite{o3} proved the existence of multi-peak solutions which concentrate around any finite subsets of the non-degenerate critical points of $V$. The arguments in \cite{fw,o1,o2,o3} are mainly based on a Lyapunov-Schmidt reduction.

P. Rabinowitz \cite{r} studied \eqref{1.7} under the conditions:\\
$(V_3)$ ${V_\infty } = \mathop {\lim \inf }\limits_{|x| \to \infty } V(x) > {V_0} = \mathop {\inf }\limits_{x \in {\mathbb{R}^N}} V(x) > 0$.\\
Rabinowitz proved that \eqref{1.7} possesses a positive ground state solution for $\varepsilon  > 0$ small by using the Mountain Pass Theorem (see \cite{ar}).

The concentration behavior for the family of positive ground state solutions, which was obtained in \cite{r}, was proved by X. Wang \cite{w}. Wang proved that the positive ground state solutions of \eqref{1.7} must concentrate at global minima of $V$ as $\varepsilon  \to 0$.

Under the same condition $(V_3)$ on $V(x)$, S. Cingolani and N. Lazzo \cite{cl} proved the multiplicity of positive ground state solutions for \eqref{1.7} by using Ljusternik-Schnirelmann theory(see \cite{c}, for example).

M. del Pino and P. L. Felmer \cite{pf} studied \eqref{1.7} with the conditions on $V$ replaced by $(V_1)$ and $(V_2)$. They proved that \eqref{1.7} possesses a positive bound state solution for $\varepsilon  > 0$ small which concentrates around the local minima of $V$ in $\Lambda$ as $\varepsilon  \to 0$.

C. Gui \cite{g} studied \eqref{1.7} under the conditions $(V_1)$ and\\
$(V_4)$ There exist $k$ disjoint bounded regions ${\Omega _1},...,{\Omega _k}$ such that
\[
{V_0}: = \mathop {\inf }\limits_{{\Omega _i}} V < \mathop {\min }\limits_{\partial {\Omega _i}} V,{\text{ }}i = 1,...k.
\]
Gui showed that \eqref{1.7} possesses a positive classial bound state solution for $\varepsilon  > 0$ small which exactly possesses $k$ local maximum ${P_{\varepsilon ,1}},...,{P_{\varepsilon ,k}}$ satisfying ${P_{\varepsilon ,i}} \in {\Omega _i}$  and $\mathop {\lim }\limits_{\varepsilon  \to 0} V({P_{\varepsilon ,i}}) = \mathop {\inf }\limits_{{\Omega _i}} V$.

T. D'Aprile and J. Wei \cite{dw} studied \eqref{1.2} and extended the method in \cite{fw,o1,o2,o3,o3}, which was based on Lyapunov-Schmidt reduction, to conclude a similar result in the Schr\"{o}dinger-Poisson equation \eqref{1.2}.

Under the same condition $(V_3)$ on $V(x)$, X. He \cite{h} studied \eqref{1.1} with the nonlinearity replaced by $f(u)$, where $f \in {C^1}({\mathbb{R}^ + },{\mathbb{R}^ + })$ and satisfies the Ambrosetti-Rabinowitz condition ((AR) condition in short)
\[
\exists \mu  > 4,{\text{ }}0 < \mu \int_0^u {f(s)} ds \le f(u)u,
\]
$\mathop {\lim }\limits_{s \to 0} \frac{{f(s)}}
{{{s^3}}} = 0$, $\mathop {\lim }\limits_{|s| \to \infty } \frac{{f(s)}}
{{|s{|^q}}} = 0$ for some $3 < q < 5$ and $\frac{{f(s)}}
{{{s^3}}}$ is strictly increasing for $s > 0$. They obtained the existence, concentration and multiplicity of solutions for \eqref{1.7} by the same arguments as in \cite{r,w,cl}.

For more results, we can refer to \cite{a,ar1,a1,bf1,cv,dm,dw1,r2,v} and the references therein.

Our main result is the following:

\begin{theorem}\label{1.1.}
Let $(V_1)$, $(V_2)$ hold. There exist ${\lambda ^ * } > 0$ and ${\varepsilon ^ * } > 0$ such that for each $\lambda  \in [{\lambda ^ * },\infty )$ and $\varepsilon  \in (0,{\varepsilon ^ * })$, \eqref{1.1} possesses a positive solution ${u_\varepsilon } \in {H^1}({\mathbb{R}^3})$ such that\\
$(i)$ there exists a maximum point ${x_\varepsilon }$ of ${u_\varepsilon }$ such that
\[
\mathop {\lim }\limits_{\varepsilon  \to 0} {\text{dist}}({x_\varepsilon },\mathcal{M}) = 0;
\]
$(ii)$ $\exists {C_1},{C_2} > 0$, such that
\[
{u_\varepsilon }(x) \le {C_1}\exp \Bigl( { - \frac{{{C_2}}}
{\varepsilon }|x - {x_\varepsilon }|} \Bigr),
\]
where $C_1$, $C_2$ are independent of $\varepsilon$.
\end{theorem}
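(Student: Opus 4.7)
The plan is to adapt the local penalization method of del Pino and Felmer \cite{pf} to the critical-growth nonlocal setting. After rescaling $u(x)=v(x/\varepsilon)$, problem \eqref{1.1} transforms into
\[
-\Delta v + V(\varepsilon y)v + \psi_v v = \lambda v_+^{p-1} + v_+^5 \quad \text{in } \mathbb{R}^3,
\]
with $-\Delta \psi_v = v^2$. I introduce a modified nonlinearity $\tilde g_\varepsilon(y,s)$ that coincides with $g(s)=\lambda s_+^{p-1}+s_+^5$ for $y\in\Lambda/\varepsilon$ and is truncated to $\min\{g(s),V(\varepsilon y)s/\kappa\}$ for $y\notin\Lambda/\varepsilon$, with $\kappa>4$ fixed. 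The associated functional $\Gamma_\varepsilon$ then satisfies an Ambrosetti-Rabinowitz-type estimate on $(\Lambda/\varepsilon)^c$, while the critical structure of the problem is preserved inside $\Lambda/\varepsilon$.

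Next I verify mountain pass geometry for $\Gamma_\varepsilon$ and extract a (PS) sequence at the mountain pass level $c_\varepsilon$. The main analytical difficulty is that $g$ fails (AR) when $3<p\le 4$ and $g(s)/s^3$ is not monotone, which rules out both the standard (AR) boundedness argument and the direct use of the Nehari manifold. I would bound (PS) sequences by combining Jeanjean's monotonicity trick \cite{j} (applied to an auxiliary family $\Gamma_{\varepsilon,\theta}$ that yields bounded (PS) sequences for almost every parameter) with a Pohozaev-Nehari identity in the spirit of Ruiz \cite{r1}, exploiting $p>3$ to produce a useful quadratic control of the norm. Compactness of the weak limit then reduces to the strict inequality $c_\varepsilon<\tfrac{1}{3}S^{3/2}$, where $S$ is the best Sobolev constant for $D^{1,2}(\mathbb{R}^3)\hookrightarrow L^6(\mathbb{R}^3)$.

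The hardest step is verifying this strict level estimate uniformly for small $\varepsilon$, and this is where the condition $\lambda\ge\lambda^*$ enters. Using a cut-off Aubin-Talenti instanton $U_\delta$ centered at a point of $\mathcal M$ as test path leads to
\[
\max_{t\ge0}\Gamma_\varepsilon(tU_\delta)\le \tfrac{1}{3}S^{3/2}+C_1 V_0\,\delta+C_2\iint\frac{U_\delta(x)^2 U_\delta(y)^2}{|x-y|}\,dx\,dy-C_3\lambda\int U_\delta^p\,dx,
\]
in which the subcritical term contributes a gain of order $\delta^{3-p/2}$, the $V$-term a loss of order $\delta$, and the nonlocal Coulomb energy (via Hardy-Littlewood-Sobolev) a loss of order $\delta^2$. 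The borderline case $p=4$ is decisive: gain and $V$-loss are both of order $\delta$, so $\lambda^*$ must be chosen sufficiently large to push the level below the compactness threshold. The presence of the nonlocal Coulomb energy complicates the classical Brezis-Nirenberg computation and requires careful bookkeeping of the interplay between $\lambda$ and $\varepsilon$; this is the principal technical obstacle of the proof.

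Finally, I return to the original problem. A Moser iteration adapted to the nonlocal term gives a uniform $L^\infty$ bound on the solution $v_\varepsilon$ of the modified problem, and an elliptic comparison argument on $(\Lambda/\varepsilon)^c$ shows that $v_\varepsilon$ is small outside a small neighborhood of its maximum point $y_\varepsilon$, so that $\tilde g_\varepsilon(y,v_\varepsilon)=g(v_\varepsilon)$ on all of $\mathbb{R}^3$ and hence $u_\varepsilon(x):=v_\varepsilon(x/\varepsilon)$ solves \eqref{1.1}. A blow-up argument around $y_\varepsilon$ shows that $v_\varepsilon(\,\cdot+y_\varepsilon)$ converges in $H^1(\mathbb{R}^3)$ to a positive least-energy solution of the limit autonomous problem with constant potential $V_0$, which forces $x_\varepsilon=\varepsilon y_\varepsilon$ to approach $\mathcal M$ and yields $(i)$. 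The exponential decay $(ii)$ follows from a maximum-principle comparison with the radial barrier $C_1 e^{-C_2|y-y_\varepsilon|}$ in the region where $v_\varepsilon$ is small, using $(V_1)$ to secure a positive lower bound for the linearized operator outside the concentration region.
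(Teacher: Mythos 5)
Your proposal and the paper share the same broad blueprint (penalization, critical level estimate below $\tfrac13 S^{3/2}$, Moser iteration, elliptic comparison), but the penalization scheme and the mechanism for producing a bounded, convergent Palais--Smale sequence are genuinely different, and in both places your route has gaps that the paper's route was designed precisely to avoid.

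First, you penalize by truncating the nonlinearity outside $\Lambda/\varepsilon$ in the style of del Pino--Felmer, whereas the paper adds the Byeon--Wang penalty $Q_\varepsilon(v)=(\int\chi_\varepsilon v^2-1)_+^2$ to the functional and leaves the critical nonlinearity untouched. The authors flag explicitly that the del Pino--Felmer arguments do not transfer here, and the reason is structural: the truncation only modifies the equation outside $\Lambda/\varepsilon$, so inside $\Lambda$ the nonlinearity $\lambda s^{p-1}+s^5$ with $3<p\le 4$ still violates the Ambrosetti--Rabinowitz condition with $\mu>4$ (one needs $\mu>4$ because the Coulomb term $\tfrac14\int\phi_v v^2$ is quartic), so the usual del Pino--Felmer bound on Palais--Smale sequences collapses. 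You propose to repair this with Jeanjean's monotonicity trick combined with a Pohozaev--Nehari identity, but this step is not justified in the non-autonomous, truncated setting. To control the family $\theta\mapsto v_\theta$ as $\theta\to 1$ you would need a Pohozaev identity for the penalized equation; this identity involves $\varepsilon\int(\nabla V)(\varepsilon x)\cdot x\,v^2$, which is not available since $(V_1)$ gives only continuity of $V$, and moreover your truncated nonlinearity $\tilde g_\varepsilon(y,s)$ is only measurable (not $C^1$) in $y$, so a Pohozaev identity for the truncated problem does not hold in the usual form. The paper instead (a) treats the autonomous limit problem first, using the Hirata--Ikoma--Tanaka rescaled path $\theta\mapsto e^{2\theta}u(e^\theta x)$ to build a $(PS)_{c_a}$ sequence satisfying the extra property $G_a(u_n)\to 0$ (a combination $2\langle I_a',u_n\rangle-P_a(u_n)\to 0$) which then trivially bounds $\|u_n\|$; and (b) handles the non-autonomous penalized problem via the Figueiredo--Ikoma--Santos Junior quantitative deformation argument, which constructs a Palais--Smale sequence inside the $d_0$-neighborhood $X_\varepsilon^{d_0}$ of a compact set built from $S_{V_0}$, so boundedness is automatic and no Jeanjean-type argument is needed at this stage. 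This is the key idea your proposal misses.

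Second, the concentration step in your outline ("a blow-up argument around $y_\varepsilon$ shows $H^1$-convergence to a ground state of the limit problem") is a placeholder for what is actually the hardest part of the proof. Because the growth is critical, mere boundedness of $\{v_\varepsilon\}$ does not prevent a bubble from forming or escaping; the paper's Lemma 4.3 rules this out through a multi-case analysis combining Lemma 2.5 (vanishing), Lemma 2.6 (Benci--Cerami blow-up), the strict level inequality, and a careful annular decomposition $u_{\varepsilon,1},u_{\varepsilon,2}$. Nothing in your proposal addresses why the weak limit is nontrivial and why no energy is lost to a concentrating Aubin--Talenti profile or to infinity. Finally, your order bookkeeping in the level estimate is off: with the normalized instanton one has $\int v_\delta^2=O(\delta^{1/2})$ (not $O(\delta)$), $\int\phi_{v_\delta}v_\delta^2\le C(\int v_\delta^{12/5})^{5/3}=O(\delta)$ (not $O(\delta^2)$), and $\int v_\delta^p\sim\delta^{(6-p)/4}$ (not $\delta^{3-p/2}$). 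With the correct exponents, the $\lambda$-gain term $\lambda\delta^{(6-p)/4}$ has exponent $\ge\tfrac12$ for all $p\in(3,4]$ and therefore does not dominate the $O(\delta^{1/2})$ loss from $\int v_\delta^2$ unless $\lambda$ is taken large (the paper takes $\lambda=1/\delta$); so $\lambda^*$ large is required for every $p\in(3,4]$, not just at the borderline $p=4$ as you suggest.
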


We note that, to the best of our knowledge, there is no result on the existence and concentration of positive bound state solutions for Schr\"{o}dinger-Poisson type equation with the nonlinearity $\lambda |u{|^{p - 2}}u + |u{|^4}u(3 < p \le 4)$.

The proof of Theorem~\ref{1.1.} is based on variational method. The main difficulties in proving Theorem~\ref{1.1.} lie in two aspects: (i) The nonlinearity $\lambda |u{|^{p - 2}}u + |u{|^4}u$ with $p \in (3,4]$ does not satisfy $({\text{AR}})$ condition and the fact that the function $\frac{{\lambda {u^{p - 1}} + {u^5}}}
{{{u^3}}}$ is not increasing for $u > 0$  prevent us from obtaining a bounded Palais-smale sequence ((PS) sequence in short) and using the Nehari manifold respectively. The arguments in \cite{pf} can not be applied in this paper. (ii) The unboundedness of the domain $\mathbb{R}^3$ and the nonlinearity $\lambda |u{|^{p - 2}}u + |u{|^4}u(3 < p \le 4)$ with the critical Sobolev growth lead to the lack of compactness. As we will see later, the above two aspects prevent us from using the variational method in a standard way.

To overcome these difficulties, inspired by \cite{bj,fis}, we use a version of quantitative deformation lemma due to G. M. Figueiredo, N. Ikoma, J. R. Santos Junior (see Proposition~\ref{4.6.} below) to construct a special bounded (PS) sequence and recover the compactness by using a penalization method which was firstly introduced in \cite{bw}.

To complete this section, we sketch our proof.

Firstly, we need to consider the existence of ground state solutions of the associated "limiting problem" of \eqref{1.1}, which is given as
\begin{equation}\label{1.8}
\left\{ \begin{gathered}
   - \Delta u + au + \phi u = \lambda |u{|^{p - 2}}u + |u{|^4}u{\text{ in }}{\mathbb{R}^3}, \hfill \\
   - \Delta \phi  = {u^2}{\text{ in }}{\mathbb{R}^3},{\text{ }}u > 0,{\text{ }}u \in {H^1}({\mathbb{R}^3}), \hfill \\
  a > 0,{\text{ }}3 < p \le 4 \hfill \\
\end{gathered}  \right.
\end{equation}
with the corresponding energy functional
\[\begin{gathered}
  {I_a}(u) = \frac{1}
{2}\int_{{\mathbb{R}^3}} {|\nabla u{|^2}}  + \frac{a}
{2}\int_{{\mathbb{R}^3}} {{u^2}}  + \frac{1}
{{16\pi }}\int_{{\mathbb{R}^3}} {\int_{{\mathbb{R}^3}} {\frac{{{u^2}(x){u^2}(y)}}
{{|x - y|}}} } dxdy \hfill \\
  {\text{     }} - \frac{\lambda }
{p}\int_{{\mathbb{R}^3}} {{{({u^ + })}^p}}  - \frac{1}
{6}\int_{{\mathbb{R}^3}} {{{({u^ + })}^6}} ,{\text{ }}u \in {H^1}({\mathbb{R}^3}). \hfill \\
\end{gathered} \]

In \cite{hit}, J. Hirata, N. Ikoma and K. Tanaka studied the following Schr\"{o}dinger equation
\[
 - \Delta u = g(u),{\text{ }}u \in {H^1}({\mathbb{R}^N})
\]
with the corresponding energy functional
\[
I(u) = \frac{1}
{2}\int_{{\mathbb{R}^N}} {|\nabla u{|^2}}  - \int_{{\mathbb{R}^N}} {G(u)} ,{\text{ }}u \in H_r^1({\mathbb{R}^N}),
\]
where $G(u) = \int_0^u {g(s)} ds$ and $g$ satisfies the conditions due to the celebrated work by H. Berestycki and P. L. Lions \cite{bl1}. By studing the behavior of $I(u({e^{ - \theta }}x))$ for $\theta  \in \mathbb{R}$, they constructed a ${{\text{(PS)}}_c}$ sequence $\{ {u_n}\} _{n = 1}^\infty $ with an extra property $P({u_n}) \to 0$ as $n \to \infty $ where $c$ is the mountain pass level of $I$ and $P(u)=0$ is the corresponding Pohozaev's identity and then proved that the ${{\text{(PS)}}_{{c}}}$ sequence is bounded. But for the Schr\"{o}dinger-Poisson equation \eqref{1.8}, one still need something more than $P({u_n}) \to 0$ as $n \to \infty $.

For the critical case \eqref{1.8}, the constrained minimization on a new manifold due to D. Ruiz \cite{r1} seems to be difficult to be applied directly.

Motivated by \cite{hit}, by studying the behavior of ${I_a}({e^{2\theta }}u({e^\theta }x))$ for $\theta  \in \mathbb{R}$, we construct a ${{\text{(PS)}}_{{c_a}}}$ sequence $\{ {u_n}\} _{n = 1}^\infty $ with an extra property ${G_a}({u_n}) \to 0$ as $n \to \infty $ where $c_a$ is the mountain pass level of $I_a$, ${G_a}(u) = 2\left\langle {{{I'}_a}(u),u} \right\rangle  - {P_a}(u)$ and ${P_a}(u)=0$ is the Pohozaev's identity of \eqref{1.8} (see Proposition~\ref{add3} below). From this fact, the boundedness of the ${{\text{(PS)}}_{{c_a}}}$ sequence is proved easily. Proceeding by the standard arguments, the existence of ground state solution \eqref{1.8} follows (see Proposition~\ref{3.6.} below). Denoting $S_a$ the set of ground state solutions $U$ of \eqref{1.8} satisfying $U(0) = \mathop {\max }\limits_{x \in {\mathbb{R}^3}} U(x)$, we then show that $S_a$ is compact in ${H^1}({\mathbb{R}^3})$ (see Proposition~\ref{3.7.} below).

To study \eqref{1.1}, We will work with the following equivalent equation
\begin{equation}\label{1.9}
\left\{ \begin{gathered}
   - \Delta u + V(\varepsilon x)u + \phi u = \lambda |u{|^{p - 2}}u + |u{|^4}u{\text{ in }}{\mathbb{R}^3}, \hfill \\
   - \Delta \phi  = {u^2}{\text{ in }}{\mathbb{R}^3},{\text{ }}u > 0,{\text{ }}u \in {H^1}({\mathbb{R}^3}) \hfill \\
\end{gathered}  \right.
\end{equation}
with the energy functional
\[
\begin{gathered}
  {I_\varepsilon }(u) = \frac{1}
{2}\int_{{\mathbb{R}^3}} {|\nabla u{|^2}}  + \frac{1}
{2}\int_{{\mathbb{R}^3}} {V(\varepsilon x){u^2}}  + \frac{1}
{{16\pi }}\int_{{\mathbb{R}^3}} {\int_{{\mathbb{R}^3}} {\frac{{{u^2}(x){u^2}(y)}}
{{|x - y|}}} }  \hfill \\
  {\text{   }} - \frac{\lambda }
{p}\int_{{\mathbb{R}^3}} {{{({u^ + })}^p}}  - \frac{1}
{6}\int_{{\mathbb{R}^3}} {{{({u^ + })}^6}} ,{\text{ }}u \in {H_\varepsilon }, \hfill \\
\end{gathered}
\]
where ${H_\varepsilon }: = \{ {v \in {H^1}({\mathbb{R}^3})| {\int_{{\mathbb{R}^3}} {V(\varepsilon x){v^2}}  < \infty } } \}$ endowed with the norm
\[
{\left\| v \right\|_{{H_\varepsilon }}}: = {\Bigl( {\int_{{\mathbb{R}^3}} {|\nabla v{|^2}}  + \int_{{\mathbb{R}^3}} {V(\varepsilon x){v^2}} } \Bigr)^{1/2}}.
\]

Unlike \cite{h}, where the minimum of $V(x)$ is global and the nonlinear term $f(u)$ satisfies the $({\text{AR}})$ condition, the Mountain Pass Theorem can be used globally, here in the present paper, the condition $(V_2)$ is local and $3 < p \le 4$, we need to use a penalization method introduced in \cite{bw}, which helps us to overcome the obstacle caused by the non-compactness due to the
unboundedness of the domain and the lack of $({\text{AR}})$ condition. To this end, we should modify the energy functional.

Following \cite{bj}, we set ${J_\varepsilon }:{H_\varepsilon } \to \mathbb{R}$ be given by
\[
{J_\varepsilon }(v) = {I_\varepsilon }(v) + {Q_\varepsilon }(v),
\]
where
\[
{Q_\varepsilon }(v) = \Bigl( {\int_{{\mathbb{R}^3}} {{\chi _\varepsilon }{v^2}}  - 1} \Bigr)_ + ^2
\]
and
\[
{\chi _\varepsilon }(x) = \left\{ \begin{gathered}
  0{\text{ if }}x \in \Lambda /\varepsilon , \hfill \\
  {\varepsilon ^{ - 1}}{\text{ if }}x \notin \Lambda /\varepsilon.  \hfill \\
\end{gathered}  \right.
\]
It will be shown that the functional ${Q_\varepsilon }$ will acts as a penalization to force the concentration phenomena to occur inside $\Lambda $ (see Lemma~\ref{4.3.} below).

Using a version of quantitative deformation lemma due to G. M. Figueiredo, N. Ikoma, J. R. Santos Junior (see Proposition~\ref{4.6.} below) to construct a special bounded and convergent (PS) sequence of ${J_\varepsilon }$ in a neighborhood of the compact set ${S_{{V_0}}}$ for $\varepsilon  > 0$ small, i.e. ${J_\varepsilon }$ possesses a critical point ${v_\varepsilon }$. To verify the critical point ${v_\varepsilon }$ of ${J_\varepsilon }$ is indeed a solution of the original problem \eqref{1.9}, we need to establish a uniform estimate on ${L^\infty }$-norm of ${v_\varepsilon }$ (independent of $\varepsilon $) by using the idea of Brezis-Kato type argument and the Moser iteration technique (see also \cite{l1,zy} and Lemma~\ref{2.4.} below).

Moreover, for the critical case, the existence and concentration phenomenon of problem \eqref{1.1} has not been studied so far by variational methods. In the present paper, we will adopt some ideas of Byeon and Jeanjean \cite{bj} to study the existence and concentration of positive solutions for equation \eqref{1.1} with critical growth. But the method of Byeon and Jeanjean \cite{bj} can not be used directly and more careful analysis is needed. For this aspect, we refer to \cite{bc,rww,zcz}.

This paper is organized as follows, in Section 2, we give some
preliminary results. In Section 3, we analyze the "limiting problem" \eqref{1.8} and show the existence of ground state solutions. In Section 4, we prove the main result Theorem~\ref{1.1.}.

\section{Preliminaries}

\setcounter{equation}{0}

In the following, we recall that by the Lax-Milgram theorem, for each $u \in {H^1}({\mathbb{R}^3})$, there exists a unique ${\phi _u} \in {D^{1,2}}({\mathbb{R}^3})$ such that $ - \Delta {\phi _u} = {u^2}$. Moreover, ${\phi _u}$ can be expressed as
\[
{\phi _u}(x) = \frac{1}
{{4\pi }}\int_{{\mathbb{R}^3}} {\frac{{{u^2}(y)}}
{{|x - y|}}} dy.
\]
The function ${\phi _u}$ has the following property, see \cite{cv} and \cite{r1}.

\begin{lemma}\label{2.1.}
For any $u \in {H^1}({\mathbb{R}^3})$, we have\\
(i) $\left\| {{\phi _u}} \right\|_{{D^{1,2}}({\mathbb{R}^3})}^2 = \int_{{\mathbb{R}^3}} {{\phi _u}{u^2}}  \le C\left\| u \right\|_{{L^{12/5}}({\mathbb{R}^3})}^4 \le C\left\| u \right\|_{{H^1}({\mathbb{R}^3})}^4$;\\
(ii) ${\phi _u} \ge 0$;\\
(iii) If ${u_n} \rightharpoonup u$ in ${H^1}({\mathbb{R}^3})$, then ${\phi _{{u_n}}} \rightharpoonup {\phi _u}$ in ${{D^{1,2}}({\mathbb{R}^3})}$ and $\int_{{\mathbb{R}^3}} {{\phi _u}{u^2}}  \le \mathop {\underline {\lim } }\limits_{n \to \infty } \int_{{\mathbb{R}^3}} {{\phi _{{u_n}}}u_n^2} $;\\
(iv) If $y \in {\mathbb{R}^3}$ and $\tilde u(x) = u(x + y)$, then ${\phi _{\tilde u}}(x) = {\phi _u}(x + y)$ and $\int_{{\mathbb{R}^3}} {{\phi _{\tilde u}}{{\tilde u}^2}}  = \int_{{\mathbb{R}^3}} {{\phi _u}{u^2}} $.
\end{lemma}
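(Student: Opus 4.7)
The plan is to prove (i)--(iv) using the representation formula for $\phi_u$ together with standard Sobolev inequalities and the uniqueness statement already granted by Lax--Milgram. The whole argument is essentially elementary; the only piece that requires any care is the weak-continuity part of (iii).

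For (i), I would test the weak formulation $-\Delta \phi_u = u^2$ with $\phi_u$ itself to get $\|\phi_u\|_{D^{1,2}}^2 = \int_{\mathbb{R}^3} \phi_u u^2$. Then I would bound the right-hand side by H\"older with exponents $6$ and $6/5$:
\[
\int_{\mathbb{R}^3} \phi_u u^2 \le \|\phi_u\|_{L^6} \|u\|_{L^{12/5}}^2 \le C \|\phi_u\|_{D^{1,2}} \|u\|_{L^{12/5}}^2,
\]
using the Sobolev embedding $D^{1,2}(\mathbb{R}^3) \hookrightarrow L^6(\mathbb{R}^3)$. Dividing through gives $\|\phi_u\|_{D^{1,2}} \le C\|u\|_{L^{12/5}}^2$, hence $\|\phi_u\|_{D^{1,2}}^2 \le C\|u\|_{L^{12/5}}^4$. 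The final estimate in terms of $\|u\|_{H^1}^4$ follows because $12/5 \in [2,6]$ is reached by interpolation between $L^2$ and $L^6$, both continuously embedded images of $H^1(\mathbb{R}^3)$. For (ii), the explicit Newtonian-potential formula makes $\phi_u \ge 0$ obvious because the kernel $1/|x-y|$ and $u^2$ are both nonnegative. For (iv), substituting $w = z+y$ in the representation integral for $\phi_{\tilde u}(x)$ immediately yields $\phi_{\tilde u}(x) = \phi_u(x+y)$, and then a further change of variables in $\int \phi_{\tilde u}\tilde u^2$ gives the claimed equality of integrals.

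The step needing the most attention is (iii). From $u_n \rightharpoonup u$ in $H^1(\mathbb{R}^3)$ we obtain a uniform bound on $\|u_n\|_{L^{12/5}}$, so by (i) the sequence $\{\phi_{u_n}\}$ is bounded in $D^{1,2}(\mathbb{R}^3)$ and admits a weakly convergent subsequence with some limit $\phi^*$. To identify $\phi^* = \phi_u$ I would test against an arbitrary $\varphi \in C_c^\infty(\mathbb{R}^3)$: the left-hand side $\int \nabla \phi_{u_n}\cdot \nabla \varphi$ passes to $\int \nabla \phi^* \cdot \nabla \varphi$ by weak convergence, while the right-hand side $\int u_n^2 \varphi$ passes to $\int u^2\varphi$ by Rellich's theorem (local strong $L^2$ convergence on the support of $\varphi$). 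Hence $-\Delta \phi^* = u^2$ distributionally, and uniqueness in Lax--Milgram forces $\phi^* = \phi_u$. Since the limit is independent of the subsequence, the full sequence converges weakly in $D^{1,2}$. Finally, the lower semicontinuity inequality for the Dirichlet energy together with the identity $\int \phi_u u^2 = \|\phi_u\|_{D^{1,2}}^2$ from (i) yields
\[
\int_{\mathbb{R}^3} \phi_u u^2 = \|\phi_u\|_{D^{1,2}}^2 \le \liminf_{n\to\infty}\|\phi_{u_n}\|_{D^{1,2}}^2 = \liminf_{n\to\infty}\int_{\mathbb{R}^3} \phi_{u_n} u_n^2,
\]
completing (iii). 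The main conceptual obstacle, if any, is ensuring that passing to the limit in the nonlinear right-hand side $u_n^2$ is legitimate; this is exactly where the compact support of the test function $\varphi$ and Rellich--Kondrachov do the work, so no truncation or concentration-compactness machinery is needed.
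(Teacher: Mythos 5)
The paper does not actually prove Lemma~\ref{2.1.}; it simply refers the reader to \cite{cv} and \cite{r1} for these standard facts about the Newtonian potential. Your argument is correct and is the expected proof: (i) by testing the weak Poisson equation with $\phi_u$ and applying H\"older with exponents $6$ and $6/5$ together with the Sobolev embedding $D^{1,2}(\mathbb{R}^3)\hookrightarrow L^6(\mathbb{R}^3)$; (ii) from positivity of the kernel; (iii) by extracting a weakly convergent subsequence of $\{\phi_{u_n}\}$, identifying its limit via Rellich and the Lax--Milgram uniqueness, using the subsequence principle to upgrade to convergence of the full sequence, and then invoking weak lower semicontinuity of the $D^{1,2}$ norm together with the identity from (i); and (iv) by a change of variables. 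One small point worth spelling out in (iii): passing to the limit in $\int u_n^2\varphi$ requires not just $u_n\to u$ in $L^2_{\mathrm{loc}}$ but the Cauchy--Schwarz estimate $\int_K|u_n^2-u^2|\le\|u_n-u\|_{L^2(K)}\|u_n+u\|_{L^2(K)}$ with the second factor bounded; this is a one-line step, but it is slightly more than ``Rellich alone.''
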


Define $N:{H^1}({\mathbb{R}^3}) \to \mathbb{R}$ by
\[
N(u) = \int_{{\mathbb{R}^3}} {{\phi _u}{u^2}}.
\]
Then, the functional $N$ and its derivatives ${N'}$ and ${N''}$ possess Brezis-Lieb splitting property, which is similar to the well-known Brezis-Lieb's Lemma (see \cite{bl}) and can be stated as the following form (see \cite{zz}).

\begin{lemma}\label{2.2.}
Let ${u_n} \rightharpoonup u$ in ${H^1}({\mathbb{R}^3})$ and ${u_n} \to u$ a.e. in ${\mathbb{R}^3}$, then, as $n \to \infty $,\\
(i) $N({u_n} - u) = N({u_n}) - N(u) + o(1)$;\\
(ii) $N'({u_n} - u) = N'({u_n}) - N'(u) + o(1)$ in ${H^{ - 1}}({\mathbb{R}^3})$ and $N':{H^1}({\mathbb{R}^3}) \to {H^{ - 1}}({\mathbb{R}^3})$ is weakly sequentially continuous;\\
(iii) $N''({u_n} - u) = N''({u_n}) - N''(u) + o(1)$ in $L({H^1}({\mathbb{R}^3}),{H^{ - 1}}({\mathbb{R}^3}))$ and $N''(u) \in L({H^1}({\mathbb{R}^3}),{H^{ - 1}}({\mathbb{R}^3}))$ is compact for any $u \in {H^1}({\mathbb{R}^3})$.
\end{lemma}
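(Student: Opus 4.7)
The key device is the symmetric bilinear form
\[
B(f,g) := \iint_{\mathbb{R}^3 \times \mathbb{R}^3} \frac{f(x)\,g(y)}{|x-y|}\,dx\,dy,
\]
which, by the Hardy-Littlewood-Sobolev inequality, is bounded on $L^{6/5}(\mathbb{R}^3) \times L^{6/5}(\mathbb{R}^3)$. A direct computation gives $N(u) = \tfrac{1}{4\pi} B(u^2,u^2)$, $N'(u)[\varphi] = \tfrac{1}{\pi} B(u\varphi, u^2)$, and $N''(u)[\varphi,\psi] = \tfrac{2}{\pi} B(u\varphi, u\psi) + \tfrac{1}{\pi} B(u^2, \varphi\psi)$. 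Set $v_n := u_n - u$; then $v_n \rightharpoonup 0$ in $H^1(\mathbb{R}^3)$ and $v_n \to 0$ a.e. Because $\{u_n\}$ is bounded in $H^1 \hookrightarrow L^{12/5}$, the quantities $u^2$, $v_n^2$, $uv_n$ all lie in a bounded subset of $L^{6/5}(\mathbb{R}^3)$.

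For (i), expanding $u_n^2 = v_n^2 + 2uv_n + u^2$ and using the bilinearity and symmetry of $B$ reduces $N(u_n) - N(u) - N(v_n)$ to a finite sum of cross terms of the form $B(v_n^2, u^2)$, $B(uv_n, u^2)$, $B(uv_n, uv_n)$, and $B(v_n^2, uv_n)$. The prototypical term is
\[
B(v_n^2, u^2) = 4\pi \int_{\mathbb{R}^3} v_n^2\, \phi_u \, dx.
\]
Since $\phi_u \in L^6(\mathbb{R}^3)$ by Lemma~\ref{2.1.}, a cut-off argument handles it: for $R$ large, H\"older gives $|\int_{B_R^c} v_n^2 \phi_u| \le \|v_n\|_{L^{12/5}}^2 \|\phi_u\|_{L^6(B_R^c)}$, small uniformly in $n$; on $B_R$, the Rellich-Kondrachov embedding yields $v_n \to 0$ in $L^{12/5}(B_R)$, so the inner integral vanishes. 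The mixed terms containing $uv_n$ are handled by the same split-at-$R$ argument, which moreover shows $uv_n \to 0$ strongly in $L^{6/5}(\mathbb{R}^3)$ (using $u \in L^{12/5}$); HLS then gives the desired $o(1)$ bound.

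For (ii), the same expansion of $N'(u_n)[\varphi] - N'(u)[\varphi] - N'(v_n)[\varphi]$ yields a sum of cross terms, each linear in $\varphi$. The HLS estimate $|B(f, g\varphi)| \le C \|f\|_{L^{6/5}} \|g\|_{L^{12/5}} \|\varphi\|_{L^{12/5}}$, together with $H^1 \hookrightarrow L^{12/5}$, converts the analysis of (i) into an operator-norm bound, yielding the splitting in $H^{-1}(\mathbb{R}^3)$. Weak sequential continuity of $N'$ follows from this splitting together with Lemma~\ref{2.1.}(iii), which gives $\phi_{v_n} \rightharpoonup 0$ in $D^{1,2}$ and hence $N'(v_n) \rightharpoonup 0$ in $H^{-1}$, so that $N'(u_n) \rightharpoonup N'(u)$.

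For (iii), the same strategy applies to $N''$: each cross term in the expansion is now bilinear in $(\varphi,\psi)$, and an HLS estimate uniform over $\|\varphi\|_{H^1}, \|\psi\|_{H^1} \le 1$ produces the splitting in $\mathcal{L}(H^1(\mathbb{R}^3), H^{-1}(\mathbb{R}^3))$. Compactness of $N''(u)$ is the cleanest instance: given $\varphi_n \rightharpoonup 0$ in $H^1$, the cut-off argument shows $u\varphi_n \to 0$ in $L^{6/5}$ and $\varphi_n \to 0$ in $L^{12/5}_{\mathrm{loc}}$, so both $B(u\varphi_n, u\psi)$ and $B(u^2, \varphi_n \psi)$ vanish uniformly for $\|\psi\|_{H^1} \le 1$, whence $\|N''(u)[\varphi_n]\|_{H^{-1}} \to 0$. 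The main obstacle throughout is promoting the pointwise-in-$n$ vanishing of scalar integrals to \emph{uniform} vanishing over the unit ball of test functions, but this requires only careful bookkeeping of the H\"older/HLS exponents combined with the local compactness of $H^1 \hookrightarrow L^{12/5}_{\mathrm{loc}}$.
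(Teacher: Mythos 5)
The paper gives no proof of this lemma; it is cited from Zhao--Zhao \cite{zz}, so the comparison below is with a correct argument rather than with the paper's own. Your framework via the bilinear form $B$ and the decomposition $u_n = u + v_n$ is the natural one, and part (i) together with the compactness assertion in (iii) are handled correctly: the pivotal facts that $uv_n \to 0$ strongly in $L^{6/5}(\mathbb{R}^3)$ and that $\phi_u \in L^6$ has small tails are both identified and used.

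The gap lies in upgrading the scalar convergence of (i) to the \emph{uniform} (operator-norm) convergence needed in (ii) and in the splitting part of (iii). Expanding $N'(u_n)-N'(u)-N'(v_n)$ produces, among harmless terms controlled by $\|uv_n\|_{L^{6/5}}\to 0$, the cross-term $B(v_n^2, u\varphi)$; in (iii) the analogous terms are $B(u\varphi, v_n\psi)$ and $B(v_n\varphi, u\psi)$. The HLS bookkeeping you invoke gives, e.g., $|B(v_n^2,u\varphi)| \le C\|v_n^2\|_{L^{6/5}}\|u\|_{L^{12/5}}\|\varphi\|_{L^{12/5}}$, and the first factor is only $O(1)$, not $o(1)$; likewise $\|v_n\psi\|_{L^{6/5}}$ is merely bounded. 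Nor are these terms covered verbatim by ``the same cut-off argument'' as in (i), because the function playing the role of $\phi_u$ there now depends on the test function. To close the gap one must either rewrite $B(v_n^2, u\varphi) = 4\pi\int\phi_{v_n}\,u\varphi$, note that Lemma~\ref{2.1.}(iii) plus Rellich give $\phi_{v_n}\to 0$ in $L^3_{\mathrm{loc}}$, and split with H\"older as $\|u\|_{L^6}\|\varphi\|_{L^2}\|\phi_{v_n}\|_{L^3(B_R)}$ on $B_R$ and $\|\phi_{v_n}\|_{L^6}\|u\|_{L^{12/5}(B_R^c)}\|\varphi\|_{L^{12/5}}$ off $B_R$; or perform a two-scale truncation (cut $u$ outside $B_R$, cut $v_n$ inside $B_{2R}$, use Rellich on $B_{2R}$, and control the separated piece $B(u\chi_{B_R}\varphi, v_n\chi_{B_{2R}^c}\psi)$ via the kernel bound $|x-y|^{-1}\le R^{-1}$). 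Either route gives the asserted uniformity over the unit ball of $\varphi$ (and $\psi$); as written, your argument for (ii) and the splitting in (iii) does not.
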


\begin{lemma}\label{2.3.}
(General Minimax Principle) (\cite{w1} Theorem 2.8)\\
Let $X$ be a Banach space. Let ${M_0}$ be a closed subspace of the metric space $M$ and ${\Gamma _0} \subset C({M_0},X)$. Define
\[
\Gamma : = \left\{ {\gamma  \in C(M,X):\gamma \left| {_{{M_0}} \in {\Gamma _0}} \right.} \right\}.
\]
If $\varphi  \in {C^1}(X,\mathbb{R})$ satisfies
\[
\infty  > c: = \mathop {\inf }\limits_{\gamma  \in \Gamma } \mathop {\sup }\limits_{u \in M} \varphi (\gamma (u)) > a: = \mathop {\sup }\limits_{{\gamma _0} \in {\Gamma _0}} \mathop {\sup }\limits_{u \in {M_0}} \varphi ({\gamma _0}(u)),
\]
then, for every $\varepsilon  \in (0,(c - a)/2)$, $\delta  > 0$ and ${\gamma  \in \Gamma }$ such that $\mathop {\sup }\limits_M \varphi  \circ \gamma  \le c + \varepsilon $, there exists $u \in X$ such that\\
$(a)$ $c - 2\varepsilon  \le \varphi (u) \le c + 2\varepsilon $,\\
$(b)$ ${\text{dist}}(u,\gamma (M)) \le 2\delta $,\\
$(c)$ $\left\| {\varphi '(u)} \right\| \le 8\varepsilon /\delta $.
\end{lemma}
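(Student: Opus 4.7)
The plan is to argue by contradiction using a cut-off pseudo-gradient flow, which is the standard route for this kind of minimax principle (as in Willem's book). Suppose no point $u \in X$ satisfies (a)--(c) simultaneously. Writing
\[
\mathcal{N} := \Bigl\{ u \in X : c - 2\ep \le \varphi(u) \le c + 2\ep \text{ and } \mathrm{dist}(u,\gamma(M)) \le 2\delta \Bigr\},
\]
the failure of the conclusion means $\|\varphi'(u)\| > 8\ep/\delta$ for every $u \in \mathcal{N}$.

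Next I would build a descent flow localized near $\gamma(M)$ in the strip $\{|\varphi - c| \le 2\ep\}$. Let $V$ be a locally Lipschitz pseudo-gradient field on $\{\varphi' \ne 0\}$ with $\|V(u)\| \le 1$ and $\langle \varphi'(u),V(u)\rangle \ge \tfrac{1}{2}\|\varphi'(u)\|$, and pick a Lipschitz cut-off $\eta_0 : X \to [0,1]$ equal to $1$ on the inner set
\[
\mathcal{N}_0 := \Bigl\{ u : c - \ep \le \varphi(u) \le c + \ep \text{ and } \mathrm{dist}(u,\gamma(M)) \le \delta \Bigr\}
\]
and vanishing outside $\mathcal{N}$. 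Solving
\[
\frac{d\sigma}{dt}(t,u) = -\eta_0(\sigma(t,u))\, V(\sigma(t,u)), \qquad \sigma(0,u) = u,
\]
on $[0,\delta/2]$, the bound $\|\dot\sigma\|\le 1$ forces $\mathrm{dist}(\sigma(t,v),\gamma(M)) \le \delta$ whenever $v \in \gamma(M)$. Along the active portion of the trajectory, $\tfrac{d}{dt}\varphi(\sigma) \le -\tfrac{1}{2}\|\varphi'(\sigma)\| < -4\ep/\delta$, so a short case analysis (depending on whether $\sigma$ ever leaves $\mathcal{N}_0$ through the $\varphi$-sides or not) gives $\varphi(\sigma(\delta/2,v)) \le c - \ep$ for every $v \in \gamma(M)$ with $\varphi(v) \le c + \ep$.

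Finally, define $\tilde\gamma := \sigma(\delta/2,\gamma(\cdot)) \in C(M,X)$. Since $\ep < (c-a)/2$, every $u \in M_0$ satisfies $\varphi(\gamma(u)) \le a < c - 2\ep$, so $\gamma(u) \notin \mathcal{N}$ and the flow fixes $\gamma(u)$; hence $\tilde\gamma|_{M_0} = \gamma|_{M_0} \in \Gamma_0$ and therefore $\tilde\gamma \in \Gamma$. Combined with the previous step this yields $\sup_{u \in M} \varphi(\tilde\gamma(u)) \le c - \ep$, contradicting the definition of $c$. The main obstacle is the joint localization of the deformation: one must confine it to a $\delta$-neighborhood of $\gamma(M)$ so that conclusion (b) can be read off, freeze it on the sublevel $\{\varphi \le a\}$ so that $\tilde\gamma$ inherits the boundary condition from $\gamma$, and calibrate the integration time and the cut-off against the ratio $8\ep/\delta$ so that the flow produces a genuine drop of at least $2\ep$ in $\varphi$ along $\gamma(M)$.
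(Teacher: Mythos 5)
The paper does not prove Lemma~\ref{2.3.}; it simply cites it as Theorem~2.8 of Willem's \emph{Minimax Theorems} \cite{w1}. Your sketch reproduces exactly the pseudo-gradient deformation argument used there, and it is correct: the normalization $\|V\|\le 1$, $\langle\varphi',V\rangle\ge\frac12\|\varphi'\|$ gives $\langle\varphi',V\rangle>4\ep/\delta$ on $\mathcal N$ under the contradiction hypothesis; integrating the cut-off flow for time $\delta/2$ keeps trajectories within $\delta/2$ of $\gamma(M)$ (so the distance condition in $\mathcal N_0$ can never be the exit face), and monotonicity of $\varphi\circ\sigma$ together with the case split on whether the lower level $c-\ep$ is reached yields $\sup_M\varphi\circ\tilde\gamma\le c-\ep$; the condition $\ep<(c-a)/2$ ensures $\gamma(M_0)$ lies strictly below $c-2\ep$, hence outside $\mathcal N$, so the flow fixes it and $\tilde\gamma\in\Gamma$, giving the contradiction. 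In short, your proposal is a faithful and correct reconstruction of the proof the paper delegates to \cite{w1}.
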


Consider the following equation
\begin{equation}\label{2.1}
 - \Delta u + {V_n}(x)u = {f_n}( {x,u} ){\text{ in }}{\mathbb{R}^3},
\end{equation}
where $\{ {V_n}\} $ is a sequence of continuous functions satisfying for some positive constant $\alpha $ independent of $n$ such that
\[
{V_n}(x) \ge \alpha  > 0{\text{ for all }}x \in {\mathbb{R}^3}
\]
and ${f_n}({x,t})$ is a Carathedory function such that for any $\delta  > 0$, there exists ${C_\delta } > 0$ and
\[
|{f_n}( {x,t} )| \le \delta | t | + {C_\delta }{| t |^5},{\text{ }}\forall (x,t) \in {\mathbb{R}^3} \times \mathbb{R},
\]
where $\delta$ is independent of $n$.

From the process of proof of Theorem 1 in \cite{zy} and Theorem 1.11 in \cite{l1}, we have the following lemma:

\begin{lemma}\label{2.4.}
Assume that $\{ {v_n}\} $ is a sequence of weak solutions to \eqref{2.1} satisfying ${\left\| {{v_n}} \right\|_{{H^1}({\mathbb{R}^3})}} \le C$ for $n \in \mathbb{N}$.\\
(i) If $\{ |{v_n}{|^6}\} $ is uniformly integrable in any bounded domain in ${\mathbb{R}^3}$, then for any ${x_0} \in {\mathbb{R}^3}$, $\exists {R_0}({x_0}) > 0$ such that
\[
{\left\| {{v_n}} \right\|_{{L^\infty }\left( {{B_{{R_0}({x_0})/4}}\left( {{x_0}} \right)} \right)}} \le C({R_0}({x_0})),
\]
where ${R_0}({x_0})$ and $C({R_0}({x_0}))$ are independent of $n$.\\
(ii) If $\{ |{v_n}{|^6}\} $ is uniformly integrable near $\infty $, i.e. $\forall \varepsilon  > 0$, $\exists R > 0$, for any $r > R$, $\int_{{\mathbb{R}^3}\backslash {B_r}(0)} {|{v_n}{|^6}}  < \varepsilon $, then
\[
\mathop {\lim }\limits_{|x| \to \infty } {v_n}(x) = 0{\text{ uniformly for }}n.
\]
\end{lemma}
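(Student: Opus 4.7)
The plan is to prove both parts by a Brezis--Kato / Moser iteration scheme, using a suitable truncation of $v_n$ as a test function together with a cutoff to localize. The key nonlinear contribution to control is the critical term $|v_n|^5$, which sits exactly at the Sobolev borderline in dimension three; a uniform $H^1$ bound alone does not suffice, which is why the uniform integrability of $\{|v_n|^6\}$ enters the hypothesis.

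For part (i), fix $x_0 \in \mathbb{R}^3$, let $\eta \in C_c^\infty(B_{R_0}(x_0))$ be a standard cutoff with $\eta \equiv 1$ on $B_{R_0/2}(x_0)$, and for $s \geq 0$, $L > 0$ set $\phi_n^{(L)} = \min(|v_n|^s, L)$. Testing the equation with $\eta^2 v_n (\phi_n^{(L)})^2$, dropping the nonnegative $V_n$-term, and applying the growth bound $|f_n(x,t)| \leq \delta|t| + C_\delta|t|^5$, one arrives at an inequality of the shape
\[
\int \eta^2 |\nabla v_n|^2 (\phi_n^{(L)})^2 \leq C_\delta \int \eta^2 |v_n|^6 (\phi_n^{(L)})^2 + (\text{lower order and } \nabla\eta \text{ terms}).
\]
The critical term is handled by the splitting
\[
\int \eta^2 |v_n|^6 (\phi_n^{(L)})^2 = \int |v_n|^4 \bigl(\eta v_n \phi_n^{(L)}\bigr)^2 \leq \Bigl(\int_{B_{R_0}(x_0)} |v_n|^6\Bigr)^{2/3} \bigl\|\eta v_n \phi_n^{(L)}\bigr\|_{L^6}^2,
\]
combined with the Sobolev inequality $\|\eta v_n \phi_n^{(L)}\|_{L^6}^2 \leq S\,\|\nabla(\eta v_n \phi_n^{(L)})\|_{L^2}^2$ (whose right-hand side is controlled, up to an $(s+1)^2$ factor, by $\int\eta^2|\nabla v_n|^2(\phi_n^{(L)})^2$ plus $\nabla\eta$ terms). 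By the uniform integrability of $\{|v_n|^6\}$, I shrink $R_0(x_0) > 0$ so that $\int_{B_{R_0}(x_0)} |v_n|^6$ is below a fixed threshold for every $n$; the critical term is then absorbed into the left-hand side. Sending $L \to \infty$ yields $v_n \in L^{6(s+1)}(B_{R_0/2}(x_0))$ with a bound independent of $n$.

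A standard Moser iteration, starting from $s = 0$ and updating $s$ at each step while shrinking the ball geometrically from $B_{R_0/2}(x_0)$ down to $B_{R_0/4}(x_0)$, produces the uniform $L^\infty$ bound claimed in (i). For part (ii), I run the same argument with $\eta$ supported outside $B_R(0)$ and identically $1$ outside $B_{R+1}(0)$; the smallness of $\int_{\mathbb{R}^3 \setminus B_R(0)} |v_n|^6$ provided by uniform integrability near infinity now plays the role of the small local $L^6$ mass. Tracking the constant through the iteration, the resulting $L^\infty$ bound on $\mathbb{R}^3 \setminus B_{R'}(0)$ depends only on the tail $\int_{\mathbb{R}^3 \setminus B_R(0)} |v_n|^6$, which tends to $0$ uniformly in $n$ as $R \to \infty$, giving the uniform decay $v_n(x) \to 0$.

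The main obstacle is the critical exponent: without the uniform integrability assumption the absorption step would fail, because $(\int |v_n|^6)^{2/3}$ need not be small on any ball regardless of its radius (concentration of mass is not precluded by an $H^1$ bound alone). The entire strategy is designed precisely to use the hypothesis to kill the critical term locally and then bootstrap via Moser; once this is secured, handling the subcritical part $\delta|t|$, the zeroth-order piece involving $V_n$, and the $\nabla\eta$ remainders is routine.
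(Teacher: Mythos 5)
Your proposal is essentially the same argument that the paper relies on. The paper itself does not give the proof in detail; it cites Lemma 2.10 of He--Li \cite{hl} (together with Zhu--Yang \cite{zy} and Li \cite{l1}), all of which use exactly the Brezis--Kato / Moser scheme you describe: localize with a cutoff, split the critical term $|v_n|^4(\eta v_n\phi_n^{(L)})^2$ by H\"older and Sobolev, use the uniform smallness of the local (or tail) $L^6$ mass to absorb it, and bootstrap.

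One detail in your write-up should be made precise, because as literally stated it has a gap. You shrink $R_0(x_0)$ once so that $\sup_n\int_{B_{R_0}}|v_n|^6$ lies below a fixed threshold, and then run the iteration over all $s$. But the absorption inequality at exponent $s$ carries the $(s+1)^2$ factor you yourself point out, so the smallness required to absorb the critical term is $(s+1)^{-2}$-dependent; a single fixed threshold does not work uniformly in $s$, and re-shrinking $R_0$ at each step would collapse the final ball to a point. The standard fix, which you should state explicitly, is to use the smallness hypothesis only in the first step to pass from $L^6$ to $L^{q}$ for some fixed $q>6$ (e.g.\ $q=12$). After that, $|v_n|^4\in L^{q/4}_{\mathrm{loc}}$ uniformly with $q/4>3/2$, so the coefficient in $-\Delta v_n\le (\delta+C_\delta|v_n|^4)\,v_n$ is subcritical and the remainder of the Moser iteration (or, alternatively, a single application of the local boundedness estimate such as Lemma 8.17 of Gilbarg--Trudinger, Lemma~\ref{2.7.} in the paper) proceeds without any further smallness. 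The same remark applies to your part (ii). With that correction your outline is complete and coincides with the route taken by the paper's references.
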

\begin{proof}
See Lemma 2.10 of \cite{hl}.
\end{proof}

\begin{lemma}\label{2.5.}
(\cite{rww}) Let $R$ be a positive number and $\{ {u_n}\} $ a bounded sequence in ${H^1}({\mathbb{R}^N})$$(N \ge 3)$. If
\[
\mathop {\lim }\limits_{n \to \infty } \mathop {\sup }\limits_{x \in {\mathbb{R}^N}} \int_{{B_R}(x)} {|{u_n}{|^{2N/(N - 2)}}}  = 0,
\]
then ${u_n} \to 0$ in ${L^{2N/(N - 2)}}({\mathbb{R}^N})$ as $n \to \infty $.
\end{lemma}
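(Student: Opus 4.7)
The plan is to combine the Sobolev embedding on balls with a standard covering argument, exploiting the fact that the hypothesis already provides uniform smallness of $\|u_n\|_{L^{2^*}(B_R(x))}$ where $2^* = 2N/(N-2)$. Set $\eta_n := \sup_{x \in \mathbb{R}^N} \int_{B_R(x)} |u_n|^{2^*}$, so by assumption $\eta_n \to 0$.

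First, for every $x \in \mathbb{R}^N$, I would split the ball integral as
\[
\int_{B_R(x)} |u_n|^{2^*} = \Bigl(\int_{B_R(x)} |u_n|^{2^*}\Bigr)^{(2^*-2)/2^*} \cdot \Bigl(\int_{B_R(x)} |u_n|^{2^*}\Bigr)^{2/2^*},
\]
bound the first factor by $\eta_n^{(2^*-2)/2^*}$, and apply the Sobolev embedding $H^1(B_R(x)) \hookrightarrow L^{2^*}(B_R(x))$ to the second factor, yielding
\[
\int_{B_R(x)} |u_n|^{2^*} \le C(R,N)\, \eta_n^{(2^*-2)/2^*} \int_{B_R(x)} \bigl(|\nabla u_n|^2 + u_n^2\bigr).
\]
Next I would cover $\mathbb{R}^N$ by a family of balls $\{B_R(x_i)\}_{i \in \mathbb{N}}$ with uniformly bounded overlap $K = K(N)$ (for instance, take the balls centered on a suitable rescaled lattice). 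Summing the above inequality over $i$ gives
\[
\int_{\mathbb{R}^N} |u_n|^{2^*} \le K\, C(R,N)\, \eta_n^{(2^*-2)/2^*} \, \|u_n\|_{H^1(\mathbb{R}^N)}^2,
\]
and since $\{u_n\}$ is bounded in $H^1(\mathbb{R}^N)$ while $\eta_n \to 0$, this forces $u_n \to 0$ in $L^{2^*}(\mathbb{R}^N)$.

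The argument is essentially routine; the only points that need care are the uniformity in $x$ of the Sobolev constant for $H^1(B_R(x)) \hookrightarrow L^{2^*}(B_R(x))$ (which follows immediately from the translation invariance of both sides) and the existence of a covering with overlap depending only on $N$ (standard). The real content of the lemma is captured in the elementary but crucial step of extracting the factor $\eta_n^{(2^*-2)/2^*}$, which is what upgrades a uniform local-smallness hypothesis at the critical exponent into a global $L^{2^*}$ decay statement; this trick is necessary precisely because at the critical exponent one cannot pass to $L^{2^*}$ via the more familiar interpolation-with-$L^2$ form of Lions' lemma.
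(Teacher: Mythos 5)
Your argument is correct. The paper itself states Lemma~\ref{2.5.} without proof, citing \cite{rww}; the proof you give is the standard one for the critical-exponent version of Lions' vanishing lemma and is, in essence, the argument in that reference. The three ingredients you isolate are exactly the ones needed: the trivial factorization $\int_{B_R(x)}|u_n|^{2^*} = (\int_{B_R(x)}|u_n|^{2^*})^{(2^*-2)/2^*}(\int_{B_R(x)}|u_n|^{2^*})^{2/2^*}$, the translation-uniform Sobolev embedding $H^1(B_R(x))\hookrightarrow L^{2^*}(B_R(x))$ applied to the second factor, and a cover of $\mathbb{R}^N$ by $R$-balls with overlap $K=K(N)$ so that summing gives $\sum_i\|u_n\|_{H^1(B_R(x_i))}^2\le K\|u_n\|_{H^1(\mathbb{R}^N)}^2$. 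Your closing remark is also apt: the reason you cannot simply invoke the usual Lions lemma is that its conclusion holds only on the open range $2<p<2^*$, so extracting the small factor $\eta_n^{(2^*-2)/2^*}$ directly from the hypothesis is precisely what closes the endpoint case.
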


\begin{lemma}\label{2.6.}
(Lemma 2.7 of \cite{bc})
Let $\{ {u_n}\}  \subset H_{{\text{loc}}}^1({\mathbb{R}^N})$$(N \ge 3)$ be a sequence of functions such that
\[
{u_n} \rightharpoonup 0{\text{ in }}{H^1}({\mathbb{R}^N}).
\]
Suppose that there exist a bounded open set $Q \subset {\mathbb{R}^N}$ and a positive constant $\gamma  > 0$ such that
\[
\int_Q {|\nabla {u_n}{|^2}}  \ge \gamma  > 0,{\text{ }}\int_Q {|{u_n}{|^{2N/(N - 2)}}}  \ge \gamma  > 0.
\]
Moreover suppose that
\[
\Delta {u_n} + |{u_n}{|^{4/(N - 2)}}{u_n} = {\chi _n},
\]
where ${\chi _n} \in {H^{ - 1}}({\mathbb{R}^N})$ and
\[
|\langle {{\chi _n},\varphi } \rangle | \le {\varepsilon _n}{\left\| \varphi  \right\|_{{H^1}({\mathbb{R}^N})}},{\text{ }}\forall \varphi  \in C_c^\infty (U),
\]
where $U$ is an open neighborhood of $Q$ and $\{ {\varepsilon _n}\} $ is a sequence of positive numbers converging to $0$. Then there exist a sequence of points $\{ {y_n}\}  \subset {\mathbb{R}^N}$ and a sequence of positive numbers $\{ {\sigma _n}\}$ such that
\[
{v_n}(x): = \sigma _n^{(N - 2)/2}{u_n}({\sigma _n}x + {y_n})
\]
converges weakly in ${D^{1,2}}({\mathbb{R}^N})$ to a nontrivial solution $v$ of
\[
 - \Delta u = |u{|^{4/(N - 2)}}u,{\text{ }}u \in {D^{1,2}}({\mathbb{R}^N}).
\]
Moreover,
\[
{y_n} \to \bar y \in \bar Q{\text{ and }}{\sigma _n} \to 0.
\]
\end{lemma}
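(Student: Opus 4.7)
The plan is to extract a bubble from $(u_n)$ by combining Lions's second concentration-compactness principle with a blow-up analysis. Since $\{u_n\}$ is bounded in $H^1(\mathbb{R}^N)$ with $u_n \rightharpoonup 0$, after a subsequence one has $|\nabla u_n|^2 \rightharpoonup^{*} \mu$ and $|u_n|^{2^*} \rightharpoonup^{*} \nu$ weakly-$*$ in the sense of Radon measures (with $2^* = 2N/(N-2)$), and the concentration-compactness lemma gives atomic decompositions
\[
\nu = \sum_{j\in J} \nu_j \delta_{x_j}, \qquad \mu \ge \sum_{j\in J} \mu_j \delta_{x_j}, \qquad \mu_j \ge S \nu_j^{2/2^{*}},
\]
with $S$ the best Sobolev constant. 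The bound $\int_Q |u_n|^{2^*} \ge \gamma$ forces $\nu(\bar Q) \ge \gamma$, so at least one atom $\bar y = x_{j_0}$ lies in $\bar Q$. Testing the equation $-\Delta u_n = |u_n|^{2^{*}-2} u_n - \chi_n$ against $u_n \phi$ for a cut-off $\phi \in C_c^{\infty}(U)$ concentrated at $\bar y$, using $u_n \to 0$ in $L^2_{\mathrm{loc}}$ to kill the cross term and the hypothesis on $\chi_n$, yields $\mu_{j_0} \le \nu_{j_0}$ in the limit; combined with $\mu_{j_0} \ge S\nu_{j_0}^{2/2^*}$ this gives $\nu_{j_0} \ge S^{N/2}$.

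Fix $\delta > 0$ with $\bar B_\delta(\bar y) \subset U$ containing no atom of $\nu$ other than $\bar y$, let $K := \bar B_\delta(\bar y) \cap \bar Q \ni \bar y$, fix $\beta \in (0, S^{N/2})$, and define
\[
\sigma_n := \inf\Bigl\{ r > 0 : \sup_{y \in K} \int_{B_r(y)} |u_n|^{2^*} \ge \beta \Bigr\},
\]
selecting $y_n \in K$ with $\int_{B_{\sigma_n}(y_n)} |u_n|^{2^*} = \beta$ (possible by compactness of $K$ and continuity in $r$). Since $\int_{B_{r_*}(\bar y)} |u_n|^{2^*} \to \nu(\bar B_{r_*}(\bar y)) = \nu_{j_0} > \beta$ for any fixed $r_* \in (0,\delta)$ with $\nu(\partial B_{r_*}(\bar y)) = 0$, we obtain $\sigma_n < r_*$ for $n$ large, hence $\sigma_n \to 0$, while $y_n \in K \subset \bar Q$ is bounded. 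Set
\[
v_n(x) := \sigma_n^{(N-2)/2} u_n(\sigma_n x + y_n).
\]
By the scale invariance of the Dirichlet integral $(v_n)$ is bounded in $D^{1,2}(\mathbb{R}^N)$, and a direct change of variables gives $-\Delta v_n = |v_n|^{4/(N-2)} v_n + \tilde\chi_n$ in $\mathcal{D}'(\mathbb{R}^N)$, where $\langle \tilde\chi_n, \varphi \rangle = \langle \chi_n, \psi_n \rangle$ with $\psi_n(y) := \sigma_n^{-(N-2)/2} \varphi((y-y_n)/\sigma_n)$. One checks $\|\psi_n\|_{H^1(\mathbb{R}^N)} \le C \|\varphi\|_{H^1(\mathbb{R}^N)}$ for $\sigma_n \le 1$, and since $\sigma_n \to 0$ and $y_n \in K \subset U$, the support of $\psi_n$ lies in $U$ for $n$ large whenever $\varphi \in C_c^{\infty}(\mathbb{R}^N)$; hence $|\langle \tilde\chi_n, \varphi \rangle| \le C \varepsilon_n \|\varphi\|_{H^1} \to 0$.

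Pass to a further subsequence so that $v_n \rightharpoonup v$ in $D^{1,2}(\mathbb{R}^N)$ and $v_n \to v$ a.e. The localized L\'evy condition translates, via $B_1(z) = (y_n + \sigma_n z) + \sigma_n B_1$, into $\int_{B_1(z)} |v_n|^{2^*} \le \beta$ for every $z$ with $y_n + \sigma_n z \in K$, which for each fixed $R > 0$ holds uniformly in $|z| \le R$ once $n$ is large. Applying concentration-compactness to $(v_n)$ on bounded sets, any atom $z_k$ of the limit measure $\nu_v$ of $|v_n|^{2^*}$ would satisfy $\tilde\nu_k \ge S^{N/2}$ (from the vanishing of $\tilde\chi_n$ on $C_c^{\infty}(\mathbb{R}^N)$, by the same bubble argument as in the first paragraph), yet $\tilde\nu_k \le \beta < S^{N/2}$ from the uniform mass bound: impossible. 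Hence $\nu_v$ has no atoms, $v_n \to v$ strongly in $L^{2^{*}}_{\mathrm{loc}}(\mathbb{R}^N)$, and $\int_{B_1(0)} |v|^{2^*} = \beta > 0$, so $v \not\equiv 0$. Vitali's theorem then passes the nonlinearity to the limit to yield $-\Delta v = |v|^{4/(N-2)} v$ in $\mathcal{D}'(\mathbb{R}^N)$; the claimed $\sigma_n \to 0$ and $y_n \to \bar y \in \bar Q$ are built into the construction.

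The delicate point is the coupled derivation of the bubble estimates $\nu_{j_0}, \tilde\nu_k \ge S^{N/2}$, which requires the vanishing of the error terms in the equations for $u_n$ and $v_n$ against $C_c^{\infty}(U)$ and $C_c^{\infty}(\mathbb{R}^N)$ test functions respectively, together with a careful localization of the L\'evy scale inside $\bar Q$ so that the centers $y_n$ stay in $\bar Q$ while the no-concentration bound on $v_n$ still propagates to every fixed ball in the blown-up picture. Once these ingredients are in place, the existence of the nontrivial critical limit $v$ and the required convergences of $y_n$ and $\sigma_n$ follow.
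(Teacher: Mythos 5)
The overall strategy is the right one for Benci--Cerami's lemma: Lions's second concentration-compactness principle applied to $(u_n)$, isolation of an atom $\bar y\in\bar Q$ with $\nu(\{\bar y\})\ge S^{N/2}$ via the cut-off test function and the reverse Sobolev inequality, then blow-up at the L\'evy scale and repetition of the same two ingredients for $(v_n)$ to rule out atoms and obtain a nontrivial weak limit solving $-\Delta v=|v|^{4/(N-2)}v$. The scaling identities for $v_n$, $\tilde\chi_n$ and $\|\psi_n\|_{H^1}$ are correct, as is the derivation of $\int_{B_1(0)}|v|^{2^*}=\beta>0$.

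There is, however, a genuine gap in the localization of the L\'evy scale, which you flag but do not resolve and which your chosen $K$ makes unfixable as written. You take $K:=\bar B_\delta(\bar y)\cap\bar Q$ and define $\sigma_n$ via $\sup_{y\in K}$; the resulting bound $\int_{B_{\sigma_n}(y)}|u_n|^{2^*}\le\beta$ holds only for $y\in K$, so after rescaling you only get $\int_{B_1(z)}|v_n|^{2^*}\le\beta$ for those $z$ with $y_n+\sigma_n z\in K$. If $\bar y\in\partial\bar Q$ (which is not excluded), then $\bar y$ lies on $\partial K$, the centers $y_n$ accumulate at $\partial K$, and $y_n+\sigma_n B_R\not\subset K$ no matter how large $n$ is; hence the asserted uniform no-concentration bound on $|z|\le R$ fails in the outward direction, and the contradiction with the bubble estimate $\tilde\nu_k\ge S^{N/2}$ for atoms of $\nu_v$ does not go through. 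The cure is to \emph{not} insist that $y_n\in\bar Q$: take $K:=\bar B_\delta(\bar y)$ with $\delta$ so small that $\bar B_{2\delta}(\bar y)\subset U$ and contains no atom of $\nu$ other than $\bar y$. One then shows $y_n\to\bar y$ directly: $\int_{B_{\sigma_n}(y_n)}|u_n|^{2^*}=\beta>0$ with $\sigma_n\to0$ forces the limit of any convergent subsequence of $(y_n)$ to carry positive $\nu$-mass, hence to be an atom, hence to equal $\bar y$ by the choice of $\delta$. Since $\bar y$ is now an interior point of $K$, $y_n+\sigma_n B_R\subset K$ for large $n$, and the no-concentration bound propagates to every fixed ball as claimed. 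The conclusion $y_n\to\bar y\in\bar Q$ still holds because $\bar y$, being an atom of $\nu$ with $\nu(\bar Q)\ge\gamma$, lies in $\bar Q$ by construction; requiring $y_n\in\bar Q$ itself is neither needed by the lemma nor compatible with the blow-up argument.

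One smaller point worth spelling out: in the bubble argument for $(v_n)$, the cross term $\int(\nabla v_n\cdot\nabla\phi)v_n$ does not vanish as it did for $(u_n)$ (since $v_n\rightharpoonup v\ne 0$), but converges to $\int(\nabla v\cdot\nabla\phi)v$, which in turn tends to $0$ as the cut-off $\phi$ concentrates at $z_k$ by Cauchy--Schwarz on the shrinking annulus; this still gives $\mu_v(\{z_k\})\le\nu_v(\{z_k\})$ and hence $\tilde\nu_k\ge S^{N/2}$ whenever $\tilde\nu_k>0$. With these repairs the argument is complete.
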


The following lemma is a special case of Lemma 8.17 in \cite{gt} for $\Delta$.

\begin{lemma}\label{2.7.}
(Lemma 8.17 of \cite{gt}) Let $\Omega $ be an open subset of ${\mathbb{R}^N}$$(N \ge 2)$. Suppose that $t > N$, $h \in {L^{t/2}}(\Omega )$ and $u \in {H^1}(\Omega )$ satisfies $ - \Delta u(x) \le h(x),{\text{ }}x \in \Omega $ in the weak sense. Then for any ball ${B_{2r}}(y) \subset \Omega $,
\[
\mathop {\sup }\limits_{{B_r}(y)} u \le C\Bigl( {{{\left\| {{u^ + }} \right\|}_{{L^2}\left( {{B_{2r}}(y)} \right)}} + {{\left\| h \right\|}_{{L^{t/2}}\left( {{B_{2r}}(y)} \right)}}} \Bigr),
\]
where $C = C(N,t,r)$ is independent of $y$.
\end{lemma}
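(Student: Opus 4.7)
The plan is to execute the classical Moser iteration for the local $L^\infty$-bound of a weak subsolution, using $u^+$ as the iteration variable. Let $v := u^+ \in H^1(\Omega)$ and set $\bar v := v + k$ with $k := \|h\|_{L^{t/2}(B_{2r}(y))} + \delta$ for small $\delta > 0$ (sending $\delta \to 0$ at the end). For $\eta \in C_c^\infty(B_{2r}(y))$ with $0 \le \eta \le 1$ and $\beta \ge 1$, the intended test function is $\varphi := \eta^2(\bar v^{2\beta-1} - k^{2\beta-1})$. After first truncating $\bar v$ at height $M$ to ensure $\varphi \in H_0^1$, deriving all estimates uniformly in $M$, and letting $M \to \infty$ by monotone convergence, $\varphi$ is an admissible nonnegative test function for $-\Delta u \le h$, since it vanishes on $\{u \le 0\}$ and $\nabla \bar v = \nabla u$ on $\{u > 0\}$.

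Substituting $\varphi$ into the weak inequality and applying Young's inequality to absorb the cross term arising from $\nabla \eta^2$ produces the Caccioppoli-type bound
\[
\int \eta^2 \bar v^{2\beta-2}|\nabla\bar v|^2 \le \frac{C}{\beta^2}\int |\nabla\eta|^2 \bar v^{2\beta} + \frac{C}{\beta}\int |h|\eta^2\bar v^{2\beta-1}.
\]
Because $\bar v \ge k$, one has $|h|\bar v^{2\beta-1} \le (|h|/k)\bar v^{2\beta}$; H\"older with exponents $t/2$ and $t/(t-2)$, together with $k \ge \|h\|_{L^{t/2}(B_{2r}(y))}$, bounds the last integral by $\|\eta\bar v^\beta\|_{L^{2t/(t-2)}(B_{2r}(y))}^2$. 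Setting $w := \eta \bar v^\beta$ and using $|\nabla w|^2 \le 2|\nabla\eta|^2\bar v^{2\beta} + 2\beta^2\eta^2\bar v^{2\beta-2}|\nabla\bar v|^2$ rearranges to
\[
\|\nabla w\|_{L^2}^2 \le C\int |\nabla\eta|^2\bar v^{2\beta} + C\beta\|w\|_{L^{2t/(t-2)}}^2.
\]

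Since $t > N$, the exponent $2t/(t-2)$ lies strictly between $2$ and the Sobolev exponent $2^* := 2N/(N-2)$. Interpolating $L^{2t/(t-2)}$ between $L^2$ and $L^{2^*}$ and invoking $\|w\|_{L^{2^*}} \le C_{\mathrm{Sob}}\|\nabla w\|_{L^2}$ absorbs $\|w\|_{L^{2t/(t-2)}}^2$ into a small fraction of $\|\nabla w\|_{L^2}^2$ plus $C_\beta\|w\|_{L^2}^2$, where $C_\beta$ depends polynomially on $\beta$. Writing $\chi := N/(N-2) > 1$ and taking $\eta$ to equal $1$ on $B_{r''}$ and to be supported in $B_{r'}$ with $|\nabla \eta| \le C/(r'-r'')$, Sobolev's inequality applied to $w$ then yields the reverse-H\"older estimate
\[
\Bigl(\int_{B_{r''}} \bar v^{2\beta\chi}\Bigr)^{1/\chi} \le (C\beta)^{p}\Bigl(1 + \frac{1}{(r'-r'')^2}\Bigr)\int_{B_{r'}} \bar v^{2\beta}
\]
for some fixed $p > 1$ independent of $\beta$, valid for any concentric $B_{r''} \subset B_{r'} \subset B_{2r}(y)$.

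Iterating this inequality with $\beta_j := \chi^j$ and radii $r_j := r(1 + 2^{-j}) \downarrow r$, the product of the resulting constants converges since both $\sum j\chi^{-j}$ and $\sum \chi^{-j}$ are finite; hence
\[
\|\bar v\|_{L^\infty(B_r(y))} = \lim_{j \to \infty}\|\bar v\|_{L^{2\chi^j}(B_{r_j})} \le C(N,t,r)\|\bar v\|_{L^2(B_{2r}(y))}.
\]
Recalling $\bar v = u^+ + k$, bounding $\|\bar v\|_{L^2(B_{2r}(y))} \le \|u^+\|_{L^2(B_{2r}(y))} + k|B_{2r}|^{1/2}$, and letting $\delta \to 0$ gives the claim. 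The principal obstacle is keeping the $\beta$-dependence of the constants in the Caccioppoli/Sobolev step polynomial in $\beta$, which relies crucially on the strict inequality $t > N$: this is precisely what places $2t/(t-2)$ strictly inside the Sobolev window $(2, 2^*)$ and permits the Young-type absorption of $\|w\|_{L^{2t/(t-2)}}^2$ into $\|\nabla w\|_{L^2}^2$. A secondary but essential technicality is the truncation-at-$M$ regularization at each iteration level $\beta_j$, which is routine but must be carried out before letting $M \to \infty$ in each step to ensure the admissibility of $\varphi$.
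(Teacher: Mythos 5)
The paper offers no proof of this lemma at all --- it is quoted directly from Gilbarg--Trudinger (Theorem 8.17 there) --- and your argument is precisely the standard Moser-iteration proof of that cited result, correctly executed: the shift $k=\|h\|_{L^{t/2}}+\delta$, the H\"older step converting the $h$-term into $\|\eta\bar v^{\beta}\|_{L^{2t/(t-2)}}^{2}$, and the interpolation/absorption made possible by $2<2t/(t-2)<2N/(N-2)$ when $t>N$ are exactly the ingredients of that proof, and the constants stay polynomial in $\beta$ so the iteration closes. The one point to patch is $N=2$, where $2^{*}$ and $\chi=N/(N-2)$ are undefined as written; there one replaces $2^{*}$ by any fixed exponent $q>2t/(t-2)$ (available since $H^{1}$ embeds locally into every $L^{q}$, $q<\infty$, in two dimensions) and sets $\chi=q/2$, after which the argument is unchanged.
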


\section{The limiting problem}

The following equation for $a > 0$
\begin{equation}\label{3.1}
\left\{ \begin{gathered}
   - \Delta u + au + \phi u = \lambda |u{|^{p - 2}}u + |u{|^4}u{\text{ in }}{\mathbb{R}^3}, \hfill \\
   - \Delta \phi  = {u^2}{\text{ in }}{\mathbb{R}^3},{\text{ }}u > 0,{\text{ }}u \in {H^1}({\mathbb{R}^3}) \hfill \\
\end{gathered}  \right.
\end{equation}
 is the limiting equation of \eqref{1.1}.

We define the energy functional for the limiting problem \eqref{3.1} by
\[{I_a}(u) = \frac{1}
{2}\int_{{\mathbb{R}^3}} {|\nabla u{|^2}}  + \frac{a}
{2}\int_{{\mathbb{R}^3}} {{u^2}}  + \frac{1}
{4}\int_{{\mathbb{R}^3}} {{\phi _u}{u^2}}  - \frac{\lambda }
{p}\int_{{\mathbb{R}^3}} {{{({u^ + })}^p}}  - \frac{1}
{6}\int_{{\mathbb{R}^3}} {{{({u^ + })}^6}} ,{\text{ }}u \in {H^1}({\mathbb{R}^3}).\]
In view of \cite{ps}, if $u \in {H^1}({\mathbb{R}^3})$ is a weak solution to problem  \eqref{3.1}, then we have the following Pohozaev's identity:
\begin{equation}\label{3.2}
{P_a}(u) = \frac{1}
{2}\int_{{\mathbb{R}^3}} {|\nabla u{|^2}}  + \frac{3}
{2}a\int_{{\mathbb{R}^3}} {{u^2}}  + \frac{5}
{4}\int_{{\mathbb{R}^3}} {{\phi _u}{u^2}}  - \frac{3}
{p}\lambda \int_{{\mathbb{R}^3}} {{{({u^ + })}^p}}  - \frac{1}
{2}\int_{{\mathbb{R}^3}} {{{({u^ + })}^6}}  = 0.
\end{equation}

As in \cite{r1}, we introduce the following manifold
\[
{M_a}: = \left\{ {u \in {H^1}({\mathbb{R}^3})\backslash \{ 0\} \left| {{G_a}(u) = 0} \right.} \right\},
\]
where
\[
{G_a}(u) = \frac{3}
{2}\int_{{\mathbb{R}^3}} {|\nabla u{|^2}}  + \frac{1}
{2}a\int_{{\mathbb{R}^3}} {{u^2}}  + \frac{3}
{4}\int_{{\mathbb{R}^3}} {{\phi _u}{u^2}}  - \frac{{(2p - 3)}}
{p}\lambda \int_{{\mathbb{R}^3}} {{{({u^ + })}^p}}  - \frac{3}
{2}\int_{{\mathbb{R}^3}} {{{({u^ + })}^6}} .
\]
It is clear that
\begin{equation}\label{3.3}
{G_a}(u) = 2\left\langle {{{I'}_a}(u),u} \right\rangle  - {P_a}(u),
\end{equation}
where ${P_a}(u)$ is given in \eqref{3.2}.

\begin{remark}\label{3.1.}
If $u \in {H^1}({\mathbb{R}^3})$ is a nontrivial weak solution to \eqref{3.1}, then by \eqref{3.2}, \eqref{3.3}, we see that $u \in {M_a}$.
\end{remark}

\begin{lemma}\label{3.2.}
For any ${u \in {H^1}({\mathbb{R}^3})\backslash \{ 0\} }$, there is a unique $\tilde t > 0$ such that ${u_{\tilde t}} \in {M_a}$, where ${u_{\tilde t}}(x): = {{\tilde t}^2}u(\tilde tx)$. Moreover, ${I_a}({u_{\tilde t}}) = \mathop {\max }\limits_{t > 0} {I_a}({u_t})$.
\end{lemma}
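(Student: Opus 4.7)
The plan is to reduce the lemma to one-variable calculus on the function $h(t) := I_a(u_t)$. First I carry out the change of variables under the dilation $u_t(x) = t^2 u(tx)$; using $\phi_{u_t}(x) = t^2 \phi_u(tx)$ (which follows from the explicit representation of $\phi_u$ recorded in Lemma~\ref{2.1.}), direct computation yields
\begin{equation*}
h(t) = \frac{t^3}{2}\int|\nabla u|^2 + \frac{at}{2}\int u^2 + \frac{t^3}{4}\int\phi_u u^2 - \frac{\lambda t^{2p-3}}{p}\int (u^+)^p - \frac{t^9}{6}\int (u^+)^6.
\end{equation*}
Differentiating and matching coefficients against the definition of $G_a$ produces the key identity $t\,h'(t) = G_a(u_t)$, so locating $\tilde t$ with $u_{\tilde t} \in M_a$ amounts to locating critical points of $h$.

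For uniqueness and maximality I will study $\psi(t) := G_a(u_t)/t^3$, which after term-by-term division reads
\begin{equation*}
\psi(t) = \frac{3}{2}\int|\nabla u|^2 + \frac{a}{2t^2}\int u^2 + \frac{3}{4}\int\phi_u u^2 - \frac{(2p-3)\lambda t^{2p-6}}{p}\int (u^+)^p - \frac{3 t^6}{2}\int (u^+)^6.
\end{equation*}
Because $p>3$ forces $2p-6>0$, every $t$-dependent piece is non-increasing in $t>0$, and the term $\tfrac{a}{2t^2}\int u^2$ is strictly decreasing since $u\neq 0$ gives $\int u^2 > 0$. Hence $\psi'(t)<0$ throughout $(0,\infty)$. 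The boundary behaviour is $\psi(0^+)=+\infty$ (from the $a/(2t^2)$ term) and $\psi(+\infty)=-\infty$ (from the $-(3/2)t^6\int (u^+)^6$ contribution, which requires $u^+\not\equiv 0$; this is automatic here, since otherwise $G_a(u_t)>0$ for every $t>0$ and no $\tilde t$ with $u_{\tilde t}\in M_a$ could exist). Strict monotonicity together with the intermediate value theorem therefore yield a unique $\tilde t>0$ with $\psi(\tilde t)=0$, equivalently $u_{\tilde t}\in M_a$.

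Finally, since $h'(t)=t^2\psi(t)$ has the same sign as $\psi(t)$, the function $h$ is strictly increasing on $(0,\tilde t)$ and strictly decreasing on $(\tilde t,\infty)$, proving $I_a(u_{\tilde t})=\max_{t>0} I_a(u_t)$. The main technical point I anticipate is the strict monotonicity of $\psi$: this is precisely where the dilation $u_t(x)=t^2 u(tx)$ pays off, since it forces the nonlocal Poisson term to scale like $t^3$ in tandem with the gradient, while the standing hypothesis $p>3$ guarantees that the subcritical nonlinearity also contributes a strictly decreasing piece rather than an increasing one. This is the substitute, in our critical setting, for the usual ``$g(s)/s^3$ increasing'' monotonicity that the introduction points out is unavailable.
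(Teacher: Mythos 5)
Your proposal is correct and follows essentially the same route as the paper: reduce to the one-variable function $\gamma(t)=I_a(u_t)$, observe $t\gamma'(t)=G_a(u_t)$, and locate the unique interior maximum. Where the paper dispenses with the monotonicity analysis as ``elementary computations,'' you make it explicit via $\psi(t)=G_a(u_t)/t^3$, and you correctly flag that the argument (in the paper's proof as well) silently needs $u^+\not\equiv 0$; otherwise $\gamma$ has no critical point and the lemma's conclusion fails, a harmless but real imprecision in the statement.
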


\begin{proof}
For any ${u \in {H^1}({\mathbb{R}^3})\backslash \{ 0\} }$ and $t>0$, set ${u_t}(x): = {t^2}u(tx)$. Consider
\[
\gamma (t): = {I_a}({u_t}) = \frac{1}
{2}{t^3}\int_{{\mathbb{R}^3}} {|\nabla u{|^2}}  + \frac{1}
{2}at\int_{{\mathbb{R}^3}} {{u^2}}  + \frac{1}
{4}{t^3}\int_{{\mathbb{R}^3}} {{\phi _u}{u^2}}  - \frac{\lambda }
{p}{t^{2p - 3}}\int_{{\mathbb{R}^3}} {{{({u^ + })}^p}}  - \frac{1}
{6}{t^9}\int_{{\mathbb{R}^3}} {{{({u^ + })}^6}} .
\]
Since $2p-3 > 3$, by elementary computations, $\gamma (t)$ has a unique critical point $\tilde t > 0$ corresponding to its maximum, i.e. $\gamma (\tilde t) = \mathop {\max }\limits_{t > 0} \gamma (t)$ and $\gamma '(\tilde t) = 0$. Hence
\[
\frac{3}
{2}{{\tilde t}^2}\int_{{\mathbb{R}^3}} {|\nabla u{|^2}}  + \frac{1}
{2}a\int_{{\mathbb{R}^3}} {{u^2}}  + \frac{3}
{4}{{\tilde t}^2}\int_{{\mathbb{R}^3}} {{\phi _u}{u^2}}  - \frac{{(2p - 3)}}
{p}\lambda {{\tilde t}^{2p - 4}}\int_{{\mathbb{R}^3}} {{{({u^ + })}^p}}  - \frac{3}
{2}{{\tilde t}^8}\int_{{\mathbb{R}^3}} {{{({u^ + })}^6}}  = 0,
\]
then ${G_a}({u_{\tilde t}}) = 0$, ${u_{\tilde t}} \in {M_a}$ and ${I_a}({u_{\tilde t}}) = \mathop {\max }\limits_{t > 0} {I_a}({u_t})$.
\end{proof}

\begin{lemma}\label{3.3.}
${I_a}$ possesses the Mountain-Pass geometry.
\end{lemma}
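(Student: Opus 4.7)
The goal is to verify the two standard Mountain-Pass conditions for $I_a$: that the origin is a strict local minimum with positive barrier, and that $I_a$ takes values below zero at some point outside a small ball. Since $I_a(0)=0$ is immediate, only these two geometric features need to be established.

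For the local barrier near $0$, I would use the nonnegativity of $\phi_u$ (Lemma~\ref{2.1.}(ii)) to discard the nonlocal term in the lower bound, together with the Sobolev embeddings $H^1(\mathbb{R}^3)\hookrightarrow L^p(\mathbb{R}^3)$ (valid since $p\in(3,4]\subset(2,6)$) and $D^{1,2}(\mathbb{R}^3)\hookrightarrow L^6(\mathbb{R}^3)$. Writing $\|u\|:=\|u\|_{H^1}$ and using that $V_1$-type coercivity gives $\tfrac12\min(1,a)\|u\|^2\le \tfrac12\int|\nabla u|^2+\tfrac{a}{2}\int u^2$, one obtains
\[
I_a(u)\ \ge\ \tfrac12\min(1,a)\|u\|^2-\tfrac{\lambda}{p}C_p\|u\|^p-\tfrac{1}{6}C_6\|u\|^6.
\]
Because $p>2$ and $6>2$, choosing $\rho>0$ small enough produces $\delta>0$ with $I_a(u)\ge\delta$ whenever $\|u\|=\rho$.

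For the descent to negative values, I would exploit the same scaling used in Lemma~\ref{3.2.}. Pick any $u_0\in H^1(\mathbb{R}^3)$ with $u_0^+\not\equiv 0$ (for instance, a smooth nonnegative bump), and set $u_{0,t}(x)=t^2u_0(tx)$. Then
\[
I_a(u_{0,t})=\tfrac{t^3}{2}\!\int|\nabla u_0|^2+\tfrac{at}{2}\!\int u_0^2+\tfrac{t^3}{4}\!\int\phi_{u_0}u_0^2-\tfrac{\lambda t^{2p-3}}{p}\!\int(u_0^+)^p-\tfrac{t^9}{6}\!\int(u_0^+)^6.
\]
Since $9>\max(3,2p-3)$ for $p\le 4$ and $\int(u_0^+)^6>0$, the critical term dominates as $t\to\infty$, so $I_a(u_{0,t})\to-\infty$. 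Moreover $\|u_{0,t}\|^2=t^3\|\nabla u_0\|_2^2+t\|u_0\|_2^2\to\infty$, so for $t$ sufficiently large the element $e:=u_{0,t}$ satisfies both $\|e\|>\rho$ and $I_a(e)<0$.

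Together these two facts constitute the Mountain-Pass geometry. I do not anticipate any real obstacle here: the positivity of $\phi_u$ eliminates any trouble from the Coulomb term in the small-norm estimate, and the supercritical power $t^9$ in the scaled energy guarantees the large-$t$ descent despite the presence of both the critical and the subcritical nonlinearities. The genuine difficulty of the paper (boundedness of $(\mathrm{PS})_{c_a}$ sequences and recovery of compactness at the mountain-pass level $c_a$) is deferred to the subsequent propositions via the auxiliary functional $G_a$ and the Pohozaev identity $P_a$.
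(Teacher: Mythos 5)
Your proof is correct and follows essentially the same route as the paper: discard the nonnegative Coulomb term and use the Sobolev embeddings of $H^1$ into $L^p$ and $L^6$ for the barrier near the origin, then use the dilation $u_t(x)=t^2u(tx)$ and the dominant $t^9$ coefficient of the critical term to descend to negative values. Your version is in fact slightly more careful than the paper's, which writes $\tfrac12\|u\|_{H^1}^2$ rather than $\tfrac12\min(1,a)\|u\|_{H^1}^2$ and fixes an arbitrary $u\in H^1\setminus\{0\}$ without noting that the descent argument needs $u^+\not\equiv 0$, a point you handle explicitly by choosing a nonnegative bump.
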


\begin{proof}
$\exists \rho ,\delta  > 0$ small such that
\[\begin{gathered}
  {I_a}(u) = \frac{1}
{2}\int_{{\mathbb{R}^3}} {|\nabla u{|^2}}  + \frac{1}
{2}a\int_{{\mathbb{R}^3}} {{u^2}}  + \frac{1}
{4}\int_{{\mathbb{R}^3}} {{\phi _u}{u^2}}  - \frac{\lambda }
{p}\int_{{\mathbb{R}^3}} {{{({u^ + })}^p}}  - \frac{1}
{6}\int_{{\mathbb{R}^3}} {{{({u^ + })}^6}}  \hfill \\
  {\text{ }} \ge \frac{1}
{2}\left\| u \right\|_{{H^1}({\mathbb{R}^3})}^2 - C\lambda \left\| u \right\|_{{H^1}({\mathbb{R}^3})}^p - C\left\| u \right\|_{{H^1}({\mathbb{R}^3})}^6 \hfill \\
  {\text{ }} \ge \delta  > 0{\text{ for }}{\left\| u \right\|_{{H^1}({\mathbb{R}^3})}} = \rho  > 0. \hfill \\
\end{gathered} \]
Fix ${u \in {H^1}({\mathbb{R}^3})\backslash \{ 0\} }$, set ${u_t}(x): = {t^2}u(tx)$,
\[{I_a}({u_t}) = \frac{1}
{2}{t^3}\int_{{\mathbb{R}^3}} {|\nabla u{|^2}}  + \frac{1}
{2}at\int_{{\mathbb{R}^3}} {{u^2}}  + \frac{1}
{4}{t^3}\int_{{\mathbb{R}^3}} {{\phi _u}{u^2}}  - \frac{\lambda }
{p}{t^{2p - 3}}\int_{{\mathbb{R}^3}} {{{({u^ + })}^p}}  - \frac{1}
{6}{t^9}\int_{{\mathbb{R}^3}} {{{({u^ + })}^6}}  < 0\]
for $t > 0$ large, then $\exists {t_0} > 0$, set ${u_0}: = {u_{{t_0}}}$, $I({u_0}) < 0$.
\end{proof}

Hence we can define the Mountain-Pass level of ${I_a}$:
\begin{equation}\label{3.4}
{c_a}: = \mathop {\inf }\limits_{\gamma  \in {\Gamma _a}} \mathop {\sup }\limits_{t \in [0,1]} {I_a}(\gamma (t)),
\end{equation}
where the set of paths is defined as
\begin{equation}\label{3.5}
{\Gamma_a}: = \left\{ {\gamma  \in C([0,1],{H^1}({\mathbb{R}^3})):\gamma (0) = 0{\text{ and }}{I_a}(\gamma (1)) < 0} \right\}.
\end{equation}
Next, we will construct a (PS) sequence $\{ {u_n}\} _{n = 1}^\infty $ for $I_a$ at the level $c_a$ that satisfies ${G_a}({u_n}) \to 0$ as $n \to \infty $ i.e.

\begin{proposition}\label{add3}
There exists a sequence $\{ {u_n}\} _{n = 1}^\infty $ in ${H^1}({\mathbb{R}^3})$ such that, as $n \to \infty $,
\begin{equation}\label{3.12}
{I_a}({u_n}) \to {c_a},{\text{ }}{{I'}_a}({u_n}) \to 0,{\text{ }}{G_a}({u_n}) \to 0.
\end{equation}
\end{proposition}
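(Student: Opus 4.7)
The strategy is to lift $I_a$ to an augmented functional on $\mathbb{R}\times H^1(\mathbb{R}^3)$ via the scaling $u_\theta(x):=e^{2\theta}u(e^\theta x)$ already used in Lemma \ref{3.2.}, and then invoke the general minimax principle (Lemma \ref{2.3.}). Setting $\tilde I_a(\theta,u):=I_a(u_\theta)$, a direct change of variables gives
\[
\tilde I_a(\theta,u)=\tfrac{e^{3\theta}}{2}\!\int_{\mathbb{R}^3}\!|\nabla u|^2+\tfrac{ae^{\theta}}{2}\!\int_{\mathbb{R}^3}\! u^2+\tfrac{e^{3\theta}}{4}\!\int_{\mathbb{R}^3}\!\phi_u u^2-\tfrac{\lambda e^{(2p-3)\theta}}{p}\!\int_{\mathbb{R}^3}\!(u^+)^p-\tfrac{e^{9\theta}}{6}\!\int_{\mathbb{R}^3}\!(u^+)^6,
\]
so $\tilde I_a\in C^1(\mathbb{R}\times H^1(\mathbb{R}^3),\mathbb{R})$ and, crucially, $\partial_\theta\tilde I_a(0,u)=G_a(u)$. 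Consequently any Palais--Smale-type sequence for $\tilde I_a$ whose $\theta$-component tends to zero will automatically satisfy $G_a\to 0$ on the $u$-component.

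I would define
\[
\tilde\Gamma_a:=\{\tilde\gamma\in C([0,1],\mathbb{R}\times H^1(\mathbb{R}^3)):\tilde\gamma(0)=(0,0),\ \tilde I_a(\tilde\gamma(1))<0\},\qquad \tilde c_a:=\inf_{\tilde\gamma\in\tilde\Gamma_a}\sup_{t\in[0,1]}\tilde I_a(\tilde\gamma(t)).
\]
Because $\Phi:(\theta,u)\mapsto u_\theta$ is a continuous bijection of $\mathbb{R}\times H^1(\mathbb{R}^3)$ onto $H^1(\mathbb{R}^3)$ with $\tilde I_a=I_a\circ\Phi$, composition with $\Phi$ sends $\tilde\Gamma_a$ into $\Gamma_a$ preserving sup-values, while $\gamma\mapsto(0,\gamma)$ embeds $\Gamma_a$ into $\tilde\Gamma_a$, again preserving sup-values. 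Hence $\tilde c_a=c_a$.

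I would then apply Lemma \ref{2.3.} to $\varphi=\tilde I_a$ with $M=[0,1]$, $M_0=\{0,1\}$, and $\Gamma_0=\{\gamma_0\}$ where $\gamma_0(0)=(0,0)$ and $\gamma_0(1)=(0,u_0)$ for some fixed $u_0$ with $I_a(u_0)<0$; the hypothesis $c=c_a>0=a$ holds thanks to Lemma \ref{3.3.}. For each $n$ choose $\gamma_n\in\Gamma_a$ with $\sup I_a\circ\gamma_n\le c_a+1/n$, lift it to $\tilde\gamma_n:=(0,\gamma_n)\in\tilde\Gamma_a$, and apply the lemma with $\varepsilon_n=1/n$ and $\delta_n=1/\sqrt n$. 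This produces $(\theta_n,w_n)\in\mathbb{R}\times H^1(\mathbb{R}^3)$ such that $\tilde I_a(\theta_n,w_n)\to c_a$, $\|\tilde I_a'(\theta_n,w_n)\|\to 0$, and $\mathrm{dist}\bigl((\theta_n,w_n),\{0\}\times\gamma_n([0,1])\bigr)\to 0$, which in particular forces $\theta_n\to 0$. Setting $u_n:=(w_n)_{\theta_n}$, one has $I_a(u_n)=\tilde I_a(\theta_n,w_n)\to c_a$, while the $\theta$-component of $\tilde I_a'(\theta_n,w_n)\to 0$ translates, by the scaling identity above, exactly to $G_a(u_n)\to 0$.

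The main technical point is extracting $I_a'(u_n)\to 0$ in $H^{-1}(\mathbb{R}^3)$ from the $u$-component $\partial_u\tilde I_a(\theta_n,w_n)\to 0$. Differentiating through the scaling yields the identity $\langle\partial_u\tilde I_a(\theta_n,w_n),h\rangle=\langle I_a'(u_n),h_{\theta_n}\rangle$ for every $h\in H^1(\mathbb{R}^3)$. The map $h\mapsto h_{\theta_n}$ is a linear isomorphism of $H^1(\mathbb{R}^3)$ whose norm and that of its inverse are bounded by $e^{C|\theta_n|}$, as a direct computation of $\|h_\theta\|_{H^1}^2=e^{3\theta}\|\nabla h\|_2^2+e^{\theta}\|h\|_2^2$ shows. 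Because $\theta_n\to 0$ these quantities stay uniformly bounded, whence $\|I_a'(u_n)\|_{H^{-1}}\le C\|\partial_u\tilde I_a(\theta_n,w_n)\|\to 0$, delivering the sequence $\{u_n\}$ required by the proposition.
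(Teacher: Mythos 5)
Your proof is correct and follows essentially the same route as the paper's: you lift $I_a$ to the same augmented functional $I_a\circ\Phi$ on $\mathbb{R}\times H^1(\mathbb{R}^3)$ via the scaling $u_\theta=e^{2\theta}u(e^\theta\cdot)$, apply Lemma~\ref{2.3.} with the distance estimate to force $\theta_n\to0$, read off $G_a(u_n)\to0$ from the $\theta$-derivative (the identity $\partial_\theta\tilde I_a(\theta,u)=G_a(u_\theta)$), and recover $I_a'(u_n)\to0$ by observing that the scaling is a uniformly bounded isomorphism of $H^1$ since $\theta_n\to0$. The only cosmetic difference is your choice $\varepsilon_n=1/n$, $\delta_n=1/\sqrt n$ versus the paper's $\varepsilon_n=1/n^2$, $\delta_n=1/n$, both of which work.
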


\begin{proof}
We define the map $\Phi :\mathbb{R} \times {H^1}({\mathbb{R}^3}) \to {H^1}({\mathbb{R}^3})$ for $\theta  \in \mathbb{R}$, $v \in {H^1}({\mathbb{R}^3})$ and $x \in {\mathbb{R}^3}$ by $\Phi (\theta ,v) = {e^{2\theta }}v({e^\theta }x)$. For every $\theta  \in \mathbb{R}$, $v \in {H^1}({\mathbb{R}^3})$, the functional ${I_a} \circ \Phi $ is computed as
\[\begin{gathered}
  {I_a} \circ \Phi (\theta ,v) = \frac{1}
{2}{e^{3\theta }}\int_{{\mathbb{R}^3}} {|\nabla v{|^2}}  + \frac{1}
{2}a{e^\theta }\int_{{\mathbb{R}^3}} {{v^2}}  + \frac{1}
{4}{e^{3\theta }}\int_{{\mathbb{R}^3}} {{\phi _v}{v^2}}  \hfill \\
  {\text{ }} - \frac{\lambda }
{p}{e^{(2p - 3)\theta }}\int_{{\mathbb{R}^3}} {{{({v^ + })}^p}}  - \frac{1}
{6}{e^{9\theta }}\int_{{\mathbb{R}^3}} {{{({v^ + })}^6}} . \hfill \\
\end{gathered} \]
In view of Lemma~\ref{3.3.}, we can easily check that ${I_a} \circ \Phi (\theta ,v) > 0$ for all $(\theta ,v)$ with $|\theta |$, ${\| v \|_{{H^1}({\mathbb{R}^3})}}$ small and $({I_a} \circ \Phi )(0,{u_0}) < 0$, i.e. ${I_a} \circ \Phi $ possesses the Mountain-Pass geometry in $\mathbb{R} \times {H^1}({\mathbb{R}^3})$. Hence we can define the Mountain-Pass level of ${I_a} \circ \Phi $:
\begin{equation}\label{3.6}
{{\tilde c}_a}: = \mathop {\inf }\limits_{\tilde \gamma  \in {{\tilde \Gamma }_a}} \mathop {\sup }\limits_{t \in [0,1]} ({I_a} \circ \Phi )(\tilde \gamma (t)),
\end{equation}
where the set of paths is defined as
\begin{equation}\label{3.7}
{{\tilde \Gamma }_a}: = \left\{ {\tilde \gamma  \in C([0,1],\mathbb{R} \times {H^1}({\mathbb{R}^3})):\tilde \gamma (0) = (0,0){\text{ and }}({I_a} \circ \Phi )(\tilde \gamma (1)) < 0} \right\}.
\end{equation}
As ${\Gamma _a} = \{ {\Phi  \circ \tilde \gamma :\tilde \gamma  \in {{\tilde \Gamma }_a}} \}$, the Mountain-Pass levels of ${I_a}$ and ${I_a} \circ \Phi $ coincide, i.e. ${c_a} = {{\tilde c}_a}$.

By Lemma~\ref{2.3.}, we see that there exists a sequence ${\{ ({\theta _n},{v_n})\} _{n \in \mathbb{N}}}$ in $\mathbb{R} \times {H^1}({\mathbb{R}^3})$ such that as $n \to \infty $,
\begin{equation}\label{3.8}
({I_a} \circ \Phi )({\theta _n},{v_n}) \to {c_a},
\end{equation}
\begin{equation}\label{3.9}
({I_a} \circ \Phi )'({\theta _n},{v_n}) \to 0{\text{ in (}}\mathbb{R} \times {H^1}({\mathbb{R}^3}){)^{ - 1}},
\end{equation}
\begin{equation}\label{3.10}
{\theta _n} \to 0.
\end{equation}

Indeed, set $\varepsilon  = {\varepsilon _n}: = \frac{1}
{{{n^2}}}$, $\delta  = {\delta _n}: = \frac{1}
{n}$ in Lemma~\ref{2.3.}, \eqref{3.8}, \eqref{3.9} are direct conclusions from $(a)$, $(c)$ of Lemma~\ref{2.3.}, we just need to verify \eqref{3.10}. In view of \eqref{3.4}, \eqref{3.5}, for $\varepsilon  = {\varepsilon _n}: = \frac{1}
{{{n^2}}}$, $\exists {\gamma _n} \in {\Gamma_a} $, such that
\[
\mathop {\sup }\limits_{t \in [0,1]} {I_a}({\gamma _n}(t)) \le {c_a} + \frac{1}
{{{n^2}}}.
\]
Set ${{\tilde \gamma }_n}(t) = (0,{\gamma _n}(t))$, then
\[
\mathop {\sup }\limits_{t \in [0,1]} {I_a} \circ \Phi ({{\tilde \gamma }_n}(t)) = \mathop {\sup }\limits_{t \in [0,1]} {I_a}({\gamma _n}(t)) \le {c_a} + \frac{1}
{{{n^2}}}.
\]
By $(b)$ of Lemma~\ref{2.3.}, there exists $({\theta _n},{v_n}) \in \mathbb{R} \times {H^1}({\mathbb{R}^3})$ such that ${\text{dist}}(({\theta _n},{v_n}),(0,{\gamma _n}(t))) \le \frac{2}
{n}$, then \eqref{3.10} holds.

For every $(h,w) \in \mathbb{R} \times {H^1}({\mathbb{R}^3})$,
\begin{equation}\label{3.11}
\left\langle {({I_a} \circ \Phi )'({\theta _n},{v_n}),(h,w)} \right\rangle  = \left\langle {{{I'}_a}(\Phi ({\theta _n},{v_n})),\Phi ({\theta _n},w)} \right\rangle  + {G_a}(\Phi ({\theta _n},{v_n}))h.
\end{equation}
Taking $h=1$, $w=0$ in \eqref{3.11}, we get
\[
{G_a}(\Phi ({\theta _n},{v_n})) \to 0{\text{ as }}n \to \infty .
\]
Denote ${u_n}: = \Phi ({\theta _n},{v_n})$, we have
\[
{G_a}({u_n}) \to 0{\text{ as }}n \to \infty .
\]

For any $v \in {H^1}({\mathbb{R}^3})$, set $w(x) = {e^{ - 2{\theta _n}}}v({e^{ - {\theta _n}}}x)$, $h=0$ in \eqref{3.11}, we get
\[
\left\langle {{{I'}_a}({u_n}),v} \right\rangle  = o(1){\left\| {{e^{ - 2{\theta _n}}}v({e^{ - {\theta _n}}}x)} \right\|_{{H^1}({\mathbb{R}^3})}} = o(1){\left\| v \right\|_{{H^1}({\mathbb{R}^3})}}
\]
for ${\theta _n} \to 0$ as $n \to \infty $, i.e. ${{I'}_a}({u_n}) \to 0$ in ${({H^1}({\mathbb{R}^3}))^{ - 1}}$ as $n \to \infty $.

Hence, we have got a bounded sequence $\{ {u_n}\} _{n = 1}^\infty \subset {H^1}({\mathbb{R}^3})$ that satisfies \eqref{3.12}.
\end{proof}

Moreover, using the same argument as in \cite{r}, we can prove
\begin{equation}\label{3.13}
{c_a} = \mathop {\inf }\limits_{u \in {H^1}({\mathbb{R}^3})\backslash \{ 0\} } \mathop {\max }\limits_{t > 0} {I_a}({u_t}) = \mathop {\inf }\limits_{u \in {M_a}} {I_a}(u) > 0.
\end{equation}

For the Mountain-Pass level $c_a$ for $I_a$, we have the following estimate:

\begin{lemma}\label{3.4.}
\[{c_a} < \frac{1}
{3}{S^{\frac{3}
{2}}}\]
for $\lambda  > 0$ large, where $S$ is the best Sobolev constant for the embedding ${D^{1,2}}({\mathbb{R}^3}) \hookrightarrow {L^6}({\mathbb{R}^3})$.
\end{lemma}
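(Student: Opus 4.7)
The plan is to use the minimax inequality $c_a \leq \max_{t \geq 0} I_a(tu)$ for a conveniently chosen $u$, and to show that the right-hand side can be made arbitrarily small by taking $\lambda$ large. Because we only require $\lambda$ large (rather than a sharp threshold valid uniformly in $\lambda$), it suffices to test with any fixed $u \in H^1(\mathbb{R}^3)\setminus\{0\}$ with $u \geq 0$; no truncated Aubin--Talenti instanton is strictly needed here, unlike in classical Brezis--Nirenberg arguments aiming at a threshold independent of the subcritical coefficient.

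Fix such a $u$ and set
\[
f_\lambda(t) := I_a(tu) = A t^2 + B t^4 - \lambda C t^p - D t^6,
\]
where $A = \tfrac{1}{2}\int_{\mathbb{R}^3}(|\nabla u|^2 + au^2)$, $B = \tfrac{1}{4}\int_{\mathbb{R}^3} \phi_u u^2$, $C = \tfrac{1}{p}\int_{\mathbb{R}^3} u^p$, $D = \tfrac{1}{6}\int_{\mathbb{R}^3} u^6$ are strictly positive constants independent of $\lambda$ (finiteness of $B$ follows from Lemma~\ref{2.1.}). Since $At^2 + Bt^4 - Dt^6 \to -\infty$ as $t \to \infty$, one can pick $T > 0$ independent of $\lambda \geq 1$ so that $f_\lambda(t) < 0$ for all $t \geq T$. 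Then the segment $t \mapsto tu$, $t \in [0, T]$, is an admissible path in $\Gamma_a$, giving
\[
c_a \leq \max_{t \in [0, T]} f_\lambda(t) = \max_{t \geq 0} f_\lambda(t).
\]

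Finally, I would analyze a positive global maximizer $t_\lambda$ of $f_\lambda$ (which exists since $f_\lambda(0) = 0 < f_\lambda(t)$ for small $t > 0$). The condition $f_\lambda'(t_\lambda) = 0$, divided by $t_\lambda$, reads
\[
2A + 4 B t_\lambda^2 = p\lambda C t_\lambda^{p-2} + 6D t_\lambda^4.
\]
The left-hand side is bounded, which forces $p\lambda C t_\lambda^{p-2}$ to stay bounded as $\lambda \to \infty$; since $p > 2$, this forces $t_\lambda \to 0$ with $\lambda t_\lambda^{p-2} \to 2A/(pC)$. Substituting back and keeping leading-order terms,
\[
f_\lambda(t_\lambda) = A t_\lambda^2 - \lambda C t_\lambda^p + o(t_\lambda^2) = \tfrac{p-2}{p}\, A\, t_\lambda^2 (1 + o(1)) \to 0 \quad \text{as } \lambda \to \infty.
\]
In particular, there exists $\lambda^* > 0$ such that $c_a < \tfrac{1}{3}S^{3/2}$ whenever $\lambda \geq \lambda^*$. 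The only nonroutine step is the asymptotic analysis of the maximizer $t_\lambda$, which reduces to elementary analysis of a polynomial in $t$ with a single large parameter; all other ingredients are already in place.
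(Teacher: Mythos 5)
Your proposal is correct, and it takes a genuinely different and noticeably simpler route than the paper's. The paper follows the Brezis--Nirenberg template: it builds the truncated Aubin--Talenti function $\psi_\delta = \varphi w_\delta$, normalizes in $L^6$, tests with the Pohozaev scaling $t \mapsto t^2 v_\delta(tx)$, and then sends $\delta \to 0^+$ while taking $\lambda = 1/\delta$ to make the correction term $C\int v_\delta^2 - C\lambda\int v_\delta^p$ blow up to $-\infty$ faster than the $O(\delta^{1/2})$ error. You instead observe that because the lemma only requires $\lambda$ large, it suffices to take a fixed nonnegative $u$ and the plain segment $t \mapsto tu$ (admissible since $\Gamma_a$ only constrains the endpoints), compute $I_a(tu) = At^2 + Bt^4 - \lambda C t^p - Dt^6$ (using $\phi_{tu} = t^2\phi_u$), and analyze the maximizer $t_\lambda$ via the stationarity equation; since $t_\lambda$ stays in a compact interval uniformly in $\lambda \geq 1$, boundedness of $p\lambda C t_\lambda^{p-2}$ forces $t_\lambda \to 0$ and in fact $\lambda t_\lambda^{p-2} \to 2A/(pC)$, whence $\max_t I_a(tu) = \tfrac{p-2}{p}A\,t_\lambda^2(1 + o(1)) \to 0$. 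This actually establishes the stronger statement $c_a \to 0$ as $\lambda \to \infty$, and it avoids instanton estimates, cutoff errors, and $L^q$-norm asymptotics entirely. The trade-off is that your method cannot give the $\lambda$-independent threshold $\frac{1}{3}S^{3/2}$ that a Brezis--Nirenberg computation aims at, but since the paper itself also only proves the bound for $\lambda$ large (choosing $\lambda$ in terms of $\delta$), your argument delivers the same conclusion with less machinery.
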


\begin{proof}
Let $\varphi  \in C_c^\infty ({B_2}(0))$ satisfying $\varphi\equiv 1 $ on ${B_1}(0)$ and $0 \le \varphi  \le 1$ on ${B_{2}}(0)$. Given $\delta > 0$, we set
${\psi _\delta }(x): = \varphi (x){w_\delta }(x)$, where
\[{w_\delta }( x ) = {( {3\delta } )^{\frac{1}
{4}}}\frac{1} {{{{( {\delta  + {{| x |}^2}} )}^{\frac{1} {2}}}}}\]
satisfies
\begin{equation}\label{3.14}
\int_{{\mathbb{R}^3}} {|\nabla {w_\delta }{|^2}}  = \int_{{\mathbb{R}^3}} {|{w_\delta }{|^6}}  = {S^{\frac{3}
{2}}}.
\end{equation}
We see that
\begin{equation}\label{3.15}
\int_{{\mathbb{R}^3}\backslash {B_1}(0)} {|\nabla {\psi _\delta }{|^2}}  = O({\delta ^{1/2}}){\text{ as }}\delta  \to 0.
\end{equation}
Let ${X_\delta }: = \int_{{\mathbb{R}^3}} {{{| {\nabla {v_\delta }} |}^2}}
$, where ${v_\delta }: = {\psi _\delta }/{(\int_{{B_{2}}(0)}
{|{\psi _\delta }{|^6}} )^{\frac{1} {6}}}$. We find
\begin{equation}\label{3.16}
{X_\delta } \le S + O( {{\delta ^{1/2}}} ){\text{ as }}\delta  \to 0.
\end{equation}

In view of Lemma~\ref{3.2.}, there exists ${t_\delta } > 0$ such that $\mathop {\sup }\limits_{t \ge 0} {I_a}({({v_\delta })_t}) = {I_a}({({v_\delta })_{{t_\delta }}})$. Hence
$\frac{{d{I_a}({{({v_\delta })}_t})}}
{{dt}}\left| {_{t = {t_\delta }}} \right. = 0$, that is
\[\frac{3}
{2}t_\delta ^2\int_{{\mathbb{R}^3}} {|\nabla {v_\delta }{|^2}}  + \frac{1}
{2}a\int_{{\mathbb{R}^3}} {v_\delta ^2}  + \frac{3}
{4}t_\delta ^2\int_{{\mathbb{R}^3}} {{\phi _{{v_\delta }}}v_\delta ^2}  - \frac{{(2p - 3)}}
{p}\lambda t_\delta ^{2p - 5}\int_{{\mathbb{R}^3}} {v_\delta ^p}  - \frac{3}
{2}t_\delta ^8\int_{{\mathbb{R}^3}} {v_\delta ^6}  = 0\]
which implies
\begin{equation}\label{3.17}
t_\delta ^8 \le t_\delta ^2{X_\delta } + \frac{1}
{3}a\int_{{\mathbb{R}^3}} {v_\delta ^2}  + \frac{1}
{2}t_\delta ^2\int_{{\mathbb{R}^3}} {{\phi _{{v_\delta }}}v_\delta ^2} .
\end{equation}
Direct calculations show that
\begin{equation}\label{3.18}
\int_{{\mathbb{R}^3}} {v_\delta ^2}  = O({\delta ^{1/2}}),{\text{ }}{\Bigl( {\int_{{\mathbb{R}^3}} {v_\delta ^{12/5}} } \Bigr)^{5/3}} = O(\delta ).
\end{equation}
\eqref{3.16}, \eqref{3.17}, \eqref{3.18} and Lemma~\ref{2.1.} (i) imply that $|{t_\delta }| \le {C_1}$, where ${C_1}$ is independent of $\delta  > 0$ small.

We can assume that there is a positive constant ${C_2}$ such that ${t_\delta } \ge {C_2} > 0$ for $\delta  > 0$ small. Otherwise, we could find a sequence ${\delta _n} \to 0$ as $n \to \infty $ such that ${t_{{\delta _n}}} \to 0$ as $n \to \infty $. Now, up to a subsequence, we have
${({v_{{\delta _n}}})_{{t_{{\delta _n}}}}} \to 0$ in ${H^1}({\mathbb{R}^3})$ as $n \to \infty $. Therefore
\[
0 < {c_a} \le \mathop {\sup }\limits_{t \ge 0} {I_a}({({v_{{\delta _n}}})_t}) = {I_a}({({v_{{\delta _n}}})_{{t_{{\delta _n}}}}}) \to {I_a}(0) = 0,
\]
which is a contradiction.

Denote $g(t) = \frac{{{t^3}}}
{2}\int_{{\mathbb{R}^3}} {|\nabla {v_\delta }{|^2}}  - \frac{{{t^9}}}
{6}\int_{{\mathbb{R}^3}} {v_\delta ^6}$, it is easy to check that
\[
\mathop {\sup }\limits_{t > 0} g(t) = \frac{1}
{3}{\Bigl( {\int_{{\mathbb{R}^3}} {|\nabla {v_\delta }{|^2}} } \Bigr)^{\frac{3}
{2}}} \le \frac{1}
{3}{\Bigl( {S + O({\delta ^{1/2}})} \Bigr)^{3/2}} \le \frac{1}
{3}{S^{\frac{3}
{2}}} + O({\delta ^{1/2}}).
\]
Thus
\begin{equation}\label{3.19}
\begin{gathered}
  {\text{  }}{I}({({v_\delta })_{{t_\delta }}}) \hfill \\
   = \frac{1}
{2}t_\delta ^3\int_{{\mathbb{R}^3}} {|\nabla {v_\delta }{|^2}}  + \frac{1}
{2}{t_\delta }\int_{{\mathbb{R}^3}} {v_\delta ^2}  + \frac{1}
{4}t_\delta ^3\int_{{\mathbb{R}^3}} {{\phi _{{v_\delta }}}v_\delta ^2}  - \frac{\lambda }
{p}t_\delta ^{2p - 3}\int_{{\mathbb{R}^3}} {v_\delta ^p}  - \frac{1}
{6}t_\delta ^9\int_{{\mathbb{R}^3}} {v_\delta ^6}  \hfill \\
   \le \mathop {\sup }\limits_{t > 0} g(t) + C\int_{{\mathbb{R}^3}} {v_\delta ^2}  + C{\left( {\int_{{\mathbb{R}^3}} {v_\delta ^{12/5}} } \right)^{5/3}} - C\lambda \int_{{\mathbb{R}^3}} {v_\delta ^p}  \hfill \\
   \le \frac{1}
{3}{S^{\frac{3}
{2}}} + O({\delta ^{1/2}}) + C\int_{{\mathbb{R}^3}} {v_\delta ^2}  - C\lambda \int_{{\mathbb{R}^3}} {v_\delta ^p} , \hfill \\
\end{gathered}
\end{equation}
where we have used \eqref{3.18}.

From \eqref{3.19}, to complete the proof,  it suffices to show that
\begin{equation}\label{3.20}
\mathop {\lim }\limits_{\delta  \to {0^ + }} \frac{1}
{{{\delta ^{1/2}}}}\Bigl[ {C\int_{{B_1}(0)} {v_\delta ^2} - C\lambda \int_{{B_1}(0)} {v_\delta ^p} } \Bigr] =  - \infty
\end{equation}
and
\begin{equation}\label{3.21}
\mathop {\lim }\limits_{\delta  \to {0^ + }} \frac{1}
{{{\delta ^{1/2}}}}\Bigl[ {C\int_{{B_2}(0)\backslash {B_1}(0)} {v_\delta ^2} - C\lambda \int_{{B_2}(0)\backslash {B_1}(0)} {v_\delta ^p} } \Bigr] \le  C.
\end{equation}
To this end, we find
\[\begin{gathered}
  \frac{1}
{{{\delta ^{1/2}}}}C\lambda \int_{{B_1}(0)} {v_\delta ^p}  \ge \frac{{C\lambda }}
{{{\delta ^{1/2}}}}\int_{{B_1}(0)} {\frac{{{\delta ^{p/4}}}}
{{{{(\delta  + |x{|^2})}^{p/2}}}}}  \hfill \\
  {\text{ }}\mathop  \ge \limits^{x' = x/{\delta ^{1/2}}} \frac{{C\lambda }}
{{{\delta ^{\frac{1}
{2}}}}}\int_{{B_{1/{\delta ^{1/2}}}}(0)} {\frac{{{\delta ^{\frac{p}
{4}}}}}
{{{{(\delta  + \delta |x'{|^2})}^{\frac{p}
{2}}}}}{\delta ^{\frac{3}
{2}}}}  \ge C\lambda {\delta ^{1 - \frac{p}
{4}}}\int_{{B_{1/{\delta ^{1/2}}}}(0)} {\frac{1}
{{{{(1 + |x'{|^2})}^{p/2}}}}} . \hfill \\
\end{gathered} \]
Since $p \in (3,4]$, choosing $\lambda  = 1/\delta$ and combining with \eqref{3.18}, \eqref{3.20} holds.

Since
\[
\frac{1}
{{{\delta ^{1/2}}}}\Bigl[ {\int_{{B_2}(0)\backslash {B_1}(0)} {v_\delta ^2}  - C\lambda \int_{{B_2}(0)\backslash {B_1}(0)} {v_\delta ^p} } \Bigr] \le \frac{C}
{{{\delta ^{1/2}}}}\int_{{B_2}(0)\backslash {B_1}(0)} {v_\delta ^2}  \le C,
\]
where we have used \eqref{3.18}, then \eqref{3.21} holds.
\end{proof}

\begin{lemma}\label{add4}
Every sequence $\{ {u_n}\} _{n = 1}^\infty $ satisfying \eqref{3.12} is bounded in ${H^1}({\mathbb{R}^3})$.
\end{lemma}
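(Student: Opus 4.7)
The plan is to exploit the extra Pohozaev-type condition $G_a(u_n) \to 0$ that was built into the sequence by Proposition \ref{add3}. Since \eqref{3.3} tells us $G_a = 2\langle I'_a(\cdot), \cdot\rangle - P_a$, the functional $G_a$ was designed precisely so that a particular linear combination with $I_a$ cancels the ``scale-invariant'' terms $\int|\nabla u|^2$ and $\int \phi_u u^2$. A direct computation of the coefficients gives
\[
I_a(u_n) - \frac{1}{3}G_a(u_n) = \frac{a}{3}\int_{\R^3} u_n^2 + \frac{2\lambda(p-3)}{3p}\int_{\R^3}(u_n^+)^p + \frac{1}{3}\int_{\R^3}(u_n^+)^6,
\]
and since $3 < p \le 4$ and $a > 0$, all three coefficients on the right-hand side are strictly positive. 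The left-hand side converges to $c_a - 0 = c_a < \infty$, which immediately delivers uniform bounds on $\|u_n\|_{L^2}$, $\|u_n^+\|_{L^p}$ and $\|u_n^+\|_{L^6}$ at one stroke.

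For the remaining gradient bound, I would go back to $G_a(u_n) = o(1)$, discard the non-negative terms $\frac{a}{2}\int u_n^2$ and $\frac{3}{4}\int \phi_{u_n}u_n^2$, and rearrange to
\[
\frac{3}{2}\int_{\R^3}|\nabla u_n|^2 \le \frac{(2p-3)\lambda}{p}\int_{\R^3}(u_n^+)^p + \frac{3}{2}\int_{\R^3}(u_n^+)^6 + o(1).
\]
The right-hand side is already controlled by the previous step, so $\|\nabla u_n\|_{L^2}$ is bounded, and therefore $\{u_n\}$ is bounded in $H^1(\R^3)$. Note that this argument uses only the two conditions $I_a(u_n) \to c_a$ and $G_a(u_n) \to 0$ from \eqref{3.12}; the third condition $I'_a(u_n) \to 0$ is not needed for boundedness per se.

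The main conceptual point, and the reason this is not a routine estimate, is that the standard AR-style combination $I_a(u_n) - \frac{1}{\mu}\langle I'_a(u_n), u_n\rangle$ with any $\mu \le 4$ produces a non-positive coefficient in front of $\int(u_n^+)^6$ and hence cannot absorb the critical term. The Pohozaev-modified combination $I_a - \frac{1}{3}G_a$ flips the sign of the critical coefficient from $-\frac{1}{6}$ to $+\frac{1}{3}$, so the critical nonlinearity, which normally obstructs boundedness, becomes the dominant positive driving term of the estimate. This is precisely why Proposition \ref{add3} was worth the extra work of constructing a $(PS)$-sequence carrying the additional Pohozaev property.
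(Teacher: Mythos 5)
Your proof is correct: the coefficient computation $I_a(u_n)-\tfrac13 G_a(u_n)=\tfrac{a}{3}\int u_n^2+\tfrac{2\lambda(p-3)}{3p}\int(u_n^+)^p+\tfrac13\int(u_n^+)^6$ is right, all three coefficients are strictly positive for $3<p\le4$, and going back to $G_a(u_n)=o(1)$ to harvest the gradient bound closes the argument. The paper takes a variant of the same idea but chooses the coefficient $\tfrac{1}{2p-3}$ instead of $\tfrac13$; this choice is tuned to annihilate the $(u_n^+)^p$ term rather than the gradient and Poisson terms, yielding
\[
I_a(u_n)-\frac{1}{2p-3}G_a(u_n)=\frac{p-3}{2p-3}\int_{\R^3}|\nabla u_n|^2+\frac{p-2}{2p-3}a\int_{\R^3}u_n^2+\frac{p-3}{2(2p-3)}\int_{\R^3}\phi_{u_n}u_n^2+\frac{6-p}{3(2p-3)}\int_{\R^3}(u_n^+)^6,
\]
where again every coefficient is strictly positive when $3<p\le4$; this delivers the $\nabla$-bound (and the $L^2$ bound, hence the $H^1$ bound) in a single step with no need to revisit $G_a(u_n)=o(1)$. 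Your version trades one slightly cleaner identity for a two-step estimate, but the conceptual engine — replacing the unavailable AR-combination $I_a-\tfrac1\mu\langle I_a',u_n\rangle$ by a Pohozaev-modified combination $I_a-cG_a$ in which the sign of the critical term flips to positive — is exactly the one the authors use, and your closing remark identifies precisely why Proposition~\ref{add3} was constructed.
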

\begin{proof}
By \eqref{3.12}, we have
\[
\begin{gathered}
  {c_a} + o(1) = {I_a}({u_n}) - \frac{1}
{{2p - 3}}{G_a}({u_n}) \hfill \\
  {\text{ }} = \frac{{p - 3}}
{{2p - 3}}\int_{{\mathbb{R}^3}} {|\nabla {u_n}{|^2}}  + \frac{{p - 2}}
{{2p - 3}}a\int_{{\mathbb{R}^3}} {|{u_n}{|^2}}  + \frac{{p - 3}}
{{2(2p - 3)}}\int_{{\mathbb{R}^3}} {{\phi _{{u_n}}}u_n^2}  + \frac{{6 - p}}
{{3(2p - 3)}}\int_{{\mathbb{R}^3}} {{{(u_n^ + )}^6}} , \hfill \\
\end{gathered}
\]
we get the upper bound of ${\left\| {{u_n}} \right\|_{{H^1}({\mathbb{R}^3})}}$.
\end{proof}

\begin{lemma}\label{3.5.}
There is a sequence $\{ {x_n}\}  \subset {\mathbb{R}^3}$ and $R > 0$, $\beta  > 0$ such that
\[
\int_{{B_R}({x_n})} {u_n^2}  \ge \beta ,
\]
where $\{ {u_n}\} $ is the sequence given in \eqref{3.12}.
\end{lemma}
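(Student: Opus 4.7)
The plan is to argue by contradiction. Suppose that for every $R>0$ and every sequence $\{x_n\}\subset\mathbb{R}^3$ we have $\int_{B_R(x_n)} u_n^2\to 0$, i.e.\ $\lim_{n\to\infty}\sup_{x\in\mathbb{R}^3}\int_{B_R(x)}u_n^2=0$. Since $\{u_n\}$ is bounded in $H^1(\mathbb{R}^3)$ by Lemma~\ref{add4}, the classical Lions' vanishing lemma applies and yields $u_n\to 0$ strongly in $L^q(\mathbb{R}^3)$ for every $q\in(2,6)$. In particular, since $p\in(3,4]\subset(2,6)$ and $12/5\in(2,6)$, we get
\[
\int_{\mathbb{R}^3}(u_n^+)^p\to 0,\qquad \int_{\mathbb{R}^3}\phi_{u_n}u_n^2\le C\|u_n\|_{L^{12/5}}^4\to 0,
\]
where the latter bound is Lemma~\ref{2.1.}(i).

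The next step is to extract information from the conditions $\langle I_a'(u_n),u_n\rangle\to 0$ and $G_a(u_n)\to 0$. Writing $A_n:=\int|\nabla u_n|^2$, $B_n:=\int u_n^2$, $E_n:=\int(u_n^+)^6$, the two identities reduce, after absorbing the vanishing $L^p$ and $\phi$-terms into $o(1)$, to
\[
A_n+aB_n-E_n=o(1),\qquad \tfrac{3}{2}A_n+\tfrac{1}{2}aB_n-\tfrac{3}{2}E_n=o(1).
\]
Taking $\tfrac{3}{2}$ times the first minus the second gives $aB_n=o(1)$, whence $B_n\to 0$, and then $A_n-E_n\to 0$. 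Pass to a subsequence so that $A_n\to\ell$ and $E_n\to\ell$. If $\ell>0$, the Sobolev inequality $E_n\le S^{-3}A_n^{3}$ in the limit yields $\ell\ge S^{3/2}$. Computing the energy, under the same vanishing,
\[
c_a=\lim_{n\to\infty}I_a(u_n)=\lim_{n\to\infty}\Bigl(\tfrac{1}{2}A_n-\tfrac{1}{6}E_n\Bigr)=\tfrac{1}{3}\ell\ge\tfrac{1}{3}S^{3/2},
\]
which contradicts Lemma~\ref{3.4.} provided $\lambda$ is chosen large enough. On the other hand, if $\ell=0$, then $I_a(u_n)\to 0$, contradicting $c_a>0$ obtained in \eqref{3.13}. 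Either way vanishing is impossible, so there exist $R>0$, $\beta>0$ and $\{x_n\}\subset\mathbb{R}^3$ with $\int_{B_R(x_n)}u_n^2\ge\beta$.

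The main technical point is the clean algebraic combination of the relations $\langle I_a'(u_n),u_n\rangle\to 0$ and $G_a(u_n)\to 0$: without the extra Pohozaev-type condition $G_a(u_n)\to 0$ one could not eliminate the $L^2$-mass to produce the critical level $\tfrac{1}{3}S^{3/2}$, and this is precisely why Proposition~\ref{add3} was set up. The only other delicate ingredient is the quantitative upper bound $c_a<\tfrac{1}{3}S^{3/2}$ from Lemma~\ref{3.4.}, which requires $\lambda$ to be sufficiently large and which we invoke in the final step to reach the contradiction.
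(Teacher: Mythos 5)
Your proof is correct and its overall strategy (contradiction, Lions vanishing, Sobolev inequality, comparison with the level $\tfrac13 S^{3/2}$ from Lemma~\ref{3.4.}) is the same as the paper's, but you take a small detour. After vanishing, the paper works only with $\langle I_a'(u_n),u_n\rangle\to 0$ (giving $A_n+aB_n-E_n=o(1)$) and $I_a(u_n)\to c_a$ (giving $\tfrac12(A_n+aB_n)-\tfrac16 E_n=c_a+o(1)$); setting $l=\lim(A_n+aB_n)=\lim E_n$ immediately yields $c_a=\tfrac13 l$ and then $l\ge S^{3/2}$ since $A_n\ge S E_n^{1/3}$. You instead bring in $G_a(u_n)\to 0$ to first deduce $B_n\to 0$ and then $A_n-E_n\to 0$. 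That works, but it is an extra step: you do not need to know that the $L^2$ mass vanishes to get $c_a=\tfrac13 l$. Consequently your closing remark — that "without the extra Pohozaev-type condition $G_a(u_n)\to 0$ one could not eliminate the $L^2$-mass to produce the critical level" — is misleading for this lemma. The Pohozaev property is genuinely indispensable in Lemma~\ref{add4} (to obtain boundedness of the Palais--Smale sequence, which is what allows the vanishing lemma to be applied at all), but the vanishing dichotomy argument itself, as the paper shows, does not require it.
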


\begin{proof}
Assume the contrary that the lemma does not hold. By the Vanishing Theorem (Lemma 1.1 of \cite{l2}), it follows that as $n \to \infty $,
\[
\int_{{\mathbb{R}^3}} {|{u_n}{|^s}}  \to 0{\text{ for all }}2 < s < 6{\text{ and }}\int_{{\mathbb{R}^3}} {{\phi _{{u_n}}}u_n^2}  \to 0.
\]
Using $\left\langle {{{I'}_a}({u_n}),{u_n}} \right\rangle  = o(1)$, we get
\[
\int_{{\mathbb{R}^3}} {|\nabla {u_n}{|^2}}  + a\int_{{\mathbb{R}^3}} {u_n^2}  - \int_{{\mathbb{R}^3}} {{{(u_n^ + )}^6}}  = o(1).
\]
By ${I_a}({u_n}) \to {c_a}$, we have
\begin{equation}\label{3.22}
\frac{1}
{2}\int_{{\mathbb{R}^3}} {|\nabla {u_n}{|^2}}  + \frac{1}
{2}a\int_{{\mathbb{R}^3}} {u_n^2}  - \frac{1}
{6}\int_{{\mathbb{R}^3}} {{{(u_n^ + )}^6}}  = {c_a} + o(1).
\end{equation}
Let $l \ge 0$ be such that
\begin{equation}\label{3.23}
\int_{{\mathbb{R}^3}} {|\nabla {u_n}{|^2}}  + a\int_{{\mathbb{R}^3}} {u_n^2}  \to l
\end{equation}
and
\begin{equation}\label{3.24}
\int_{{\mathbb{R}^3}} {{{(u_n^ + )}^6}}  \to l.
\end{equation}
It is easy to check that $l > 0$, otherwise ${\left\| {{u_n}} \right\|_{{H^1}({\mathbb{R}^3})}} \to 0$ as $n \to \infty $ which contradicts to ${c_a} > 0$. From \eqref{3.22}, \eqref{3.23}, \eqref{3.24}, we get ${c_a} = \frac{1}
{3}l$.

Now, using the definition of the constant $S$, we have
\[
\int_{{\mathbb{R}^3}} {|\nabla {u_n}{|^2}}  + \int_{{\mathbb{R}^3}} {u_n^2}  \ge S{\Bigl( {\int_{{\mathbb{R}^3}} {{{(u_n^ + )}^6}} } \Bigr)^{\frac{1}
{3}}}.
\]
Letting $n \to \infty $ in the above inequality, we achieve that $l \ge {S^{3/2}}$. Hence
\[
{c_a} = \frac{1}
{3}l \ge \frac{1}
{3}{S^{\frac{3}
{2}}},
\]
which contradicts to Lemma~\ref{3.4.}.
\end{proof}

We have the following proposition:
\begin{proposition}\label{3.6.}
\eqref{3.1} has a positive ground state solution $\tilde u \in {H^1}({\mathbb{R}^3})$.
\end{proposition}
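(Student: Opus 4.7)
The plan is to extract from the Pohozaev-augmented Palais-Smale sequence produced by Proposition~\ref{add3} a weakly convergent subsequence (after a suitable translation), identify its weak limit as a nontrivial critical point of $I_a$, and then exploit the fact that the Pohozaev correction makes all remaining pieces of $I_a$ pointwise nonnegative to conclude the limit attains the mountain-pass level $c_a$. The genuinely delicate ingredients — existence of a (PS) sequence with the extra property $G_a(u_n)\to 0$, its boundedness in $H^1$, the strict estimate $c_a<\tfrac{1}{3}S^{3/2}$, and the non-vanishing dichotomy — have already been carried out in Proposition~\ref{add3}, Lemmas~\ref{add4}, \ref{3.4.} and \ref{3.5.}; what remains is essentially weak-convergence bookkeeping adapted to the critical Sobolev term and the nonlocal Poisson term.

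\textbf{Nontrivial weak limit and weak-solution property.} Let $\{u_n\}$ be the sequence from Proposition~\ref{add3}, bounded by Lemma~\ref{add4}. By Lemma~\ref{3.5.} there exist $\{x_n\}\subset\mathbb{R}^3$ and $R,\beta>0$ with $\int_{B_R(x_n)}u_n^2\geq\beta$. Set $\tilde u_n(\cdot):=u_n(\cdot+x_n)$; since $I_a$, $G_a$ and, by Lemma~\ref{2.1.}(iv), the Poisson nonlocal term are translation invariant, $\{\tilde u_n\}$ is still bounded and satisfies \eqref{3.12}, with $\int_{B_R(0)}\tilde u_n^2\geq\beta$. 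Along a subsequence $\tilde u_n\rightharpoonup\tilde u$ in $H^1(\mathbb{R}^3)$, $\tilde u_n\to\tilde u$ in $L^s_{\mathrm{loc}}(\mathbb{R}^3)$ for $s\in[1,6)$ and a.e., so $\int_{B_R(0)}\tilde u^2\geq\beta$ and hence $\tilde u\not\equiv 0$. For $\varphi\in C_c^\infty(\mathbb{R}^3)$ I pass to the limit in $\langle I_a'(\tilde u_n),\varphi\rangle=o(1)$: the quadratic terms converge by weak convergence; the subcritical term by $L^p_{\mathrm{loc}}$-convergence; the critical term because $(\tilde u_n^+)^5$ is bounded in $L^{6/5}$ and converges a.e., hence weakly in $L^{6/5}$ against $\varphi\in L^6$; and the Poisson term by the weak sequential continuity in Lemma~\ref{2.2.}(ii). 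Thus $I_a'(\tilde u)=0$, and by Remark~\ref{3.1.}, $\tilde u\in M_a$.

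\textbf{Ground-state level and positivity.} As computed in the proof of Lemma~\ref{add4},
\[
I_a(u)-\frac{1}{2p-3}G_a(u)=\frac{p-3}{2p-3}\int_{\mathbb{R}^3}|\nabla u|^2+\frac{p-2}{2p-3}a\int_{\mathbb{R}^3}u^2+\frac{p-3}{2(2p-3)}\int_{\mathbb{R}^3}\phi_u u^2+\frac{6-p}{3(2p-3)}\int_{\mathbb{R}^3}(u^+)^6,
\]
whose coefficients are all nonnegative since $3<p\leq 4$. Using weak lower semicontinuity of the $L^2$-norms of $\nabla u$ and $u$, Lemma~\ref{2.1.}(iii) for the Poisson piece, and Fatou for $(u^+)^6$, together with $I_a(\tilde u_n)\to c_a$, $G_a(\tilde u_n)\to 0$ and $G_a(\tilde u)=0$, I obtain $I_a(\tilde u)\leq c_a$. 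The reverse inequality $I_a(\tilde u)\geq\inf_{M_a}I_a=c_a$ is \eqref{3.13}, so $I_a(\tilde u)=c_a$. Finally, testing $I_a'(\tilde u)=0$ against $\tilde u^-$ and using $\phi_{\tilde u}\geq 0$ from Lemma~\ref{2.1.}(ii) gives $\|\nabla\tilde u^-\|_2^2+a\|\tilde u^-\|_2^2\leq 0$, so $\tilde u\geq 0$; then $\tilde u$ solves $-\Delta\tilde u+(a+\phi_{\tilde u})\tilde u=\lambda\tilde u^{p-1}+\tilde u^5$, and standard elliptic regularity together with the strong maximum principle yields $\tilde u>0$ in $\mathbb{R}^3$. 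The main obstacle is conceptual rather than technical: without (AR) one cannot bound a generic mountain-pass sequence, and without the strict energy bound from Lemma~\ref{3.4.} one cannot prevent concentration loss; both are circumvented upstream, which is exactly why this last step reduces to standard manipulations.
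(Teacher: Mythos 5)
Your argument is correct and follows essentially the same route as the paper: translate by $\{x_n\}$ from Lemma~\ref{3.5.}, extract a nontrivial weak limit that is a critical point, and recover the level $c_a$ via a linear combination of $I_a$ and $G_a$ together with weak lower semicontinuity. The only (inconsequential) difference is the choice of multiplier: you use $I_a-\frac{1}{2p-3}G_a$, which retains the gradient and Poisson terms (hence also invoking Lemma~\ref{2.1.}(iii)), while the paper uses $I_a-\frac{1}{3}G_a$, which annihilates those two terms and leaves only the $L^2$, $(u^+)^p$ and $(u^+)^6$ pieces; both choices yield nonnegative coefficients for $3<p\le 4$ and work equally well.
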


\begin{proof}
Let $\{ {u_n}\} $ be the sequence given in \eqref{3.12} and $c_a$ be the Mountain-Pass value for $I_a$ respectively. Denote ${\tilde u_n}(x) = {u_n}(x + {x_n})$, where  $\{{x_n}\}$ is the sequence given in Lemma~\ref{3.5.}. Using standard argument, up to a subsequence, we may assume that there is a $\tilde u \in {H^1}({\mathbb{R}^3})$ such that
\begin{equation}\label{3.25}
\left\{ \begin{gathered}
  {{\tilde u}_n} \rightharpoonup \tilde u{\text{ in }}{H^1}({\mathbb{R}^3}), \hfill \\
  {{\tilde u}_n} \to \tilde u{\text{ in }}L_{{\text{loc}}}^s({\mathbb{R}^3}){\text{ for all }}1 \le s < 6, \hfill \\
  {{\tilde u}_n} \to \tilde u{\text{ a}}{\text{.e}}{\text{. in }}{\mathbb{R}^3}.{\text{ }} \hfill \\
\end{gathered}  \right.
\end{equation}
By Lemma~\ref{3.5.}, ${\tilde u}$ is nontrivial. Moreover, ${\tilde u}$ satisfies
\begin{equation}\label{3.26}
 - \Delta u + au + {\phi _u}u = \lambda {({u^ + })^{p - 1}} + {({u^ + })^5}{\text{ in }}{\mathbb{R}^3}
\end{equation}
and ${G_a}(\tilde u) = 0$. By \eqref{3.13}, we have
\[\begin{gathered}
  {c_a} \le {I_a}(\tilde u) = {I_a}(\tilde u) - \frac{1}
{3}{G_a}(\tilde u) = \frac{1}
{3}a\int_{{\mathbb{R}^3}} {{{\tilde u}^2}}  + \frac{{2p - 6}}
{{3p}}\lambda\int_{{\mathbb{R}^3}} {{{({{\tilde u}^ + })}^p}}  + \frac{1}
{3}\int_{{\mathbb{R}^3}} {{{({{\tilde u}^ + })}^6}}  \hfill \\
   \le \mathop {\underline {\lim } }\limits_{n \to \infty } \frac{1}
{3}a\int_{{\mathbb{R}^3}} {\tilde u_n^2}  + \frac{{2p - 6}}
{{3p}}\lambda\int_{{\mathbb{R}^3}} {{{(\tilde u_n^ + )}^p}}  + \frac{1}
{3}\int_{{\mathbb{R}^3}} {{{(\tilde u_n^ + )}^6}}  = \mathop {\underline {\lim } }\limits_{n \to \infty } \Bigl[ {{I_a}({{\tilde u}_n}) - \frac{1}
{3}{G_a}({{\tilde u}_n})} \Bigr] \hfill \\
   = \mathop {\underline {\lim } }\limits_{n \to \infty } \Bigl[ {{I_a}({u_n}) - \frac{1}
{3}{G_a}({u_n})} \Bigr] = {c_a}. \hfill \\
\end{gathered} \]
Hence ${I_a}(\tilde u) = {c_a}$ and ${{I'}_a}(\tilde u) = 0$. By the standard elliptic estimate and strong maximum principle, $\tilde u(x) > 0$ for all $x \in {\mathbb{R}^3}$. In view of \eqref{3.13}, $\tilde u$ is in fact a positive ground state solution of \eqref{3.1}.
\end{proof}

Let $S_a$ the set of ground state solutions $U$ of \eqref{3.1} satisfying $U(0) = \mathop {\max }\limits_{x \in {\mathbb{R}^3}} U(x)$. Then, we obtain the following compactness of $S_a$.

\begin{proposition}\label{3.7.}
For each $a > 0$, $S_a$ is compact in ${H^1}({\mathbb{R}^3})$.
\end{proposition}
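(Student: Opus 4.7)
The plan is to show that any sequence $\{U_n\} \subset S_a$ has a subsequence converging in $H^1(\mathbb{R}^3)$ to an element of $S_a$. Each $U_n$ satisfies $I_a(U_n) = c_a$, $I_a'(U_n) = 0$, and, by Remark~\ref{3.1.}, $G_a(U_n) = 0$. Lemma~\ref{add4} then gives boundedness of $\{U_n\}$ in $H^1(\mathbb{R}^3)$. Up to a subsequence, $U_n \rightharpoonup U$ weakly in $H^1$, pointwise a.e., and strongly in $L^q_{\mathrm{loc}}$ for $1 \le q < 6$; standard weak-continuity arguments together with Lemma~\ref{2.2.} show that $U$ is a weak solution of \eqref{3.1}.

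The crux is to prove $U \not\equiv 0$. First, I would establish that $U_n(0) = \max U_n \ge c_0 > 0$ uniformly in $n$: otherwise a subsequence with $\|U_n\|_\infty \to 0$ would satisfy
\[
\int_{\mathbb{R}^3} U_n^p \le \|U_n\|_\infty^{p-2}\|U_n\|_2^2 \to 0, \qquad \int_{\mathbb{R}^3} U_n^6 \le \|U_n\|_\infty^{4}\|U_n\|_2^2 \to 0,
\]
and then $G_a(U_n) = 0$ would force $\|U_n\|_{H^1} \to 0$, contradicting $I_a(U_n) = c_a > 0$. Second, I would establish $\sup_n \|U_n\|_{L^\infty(\mathbb{R}^3)} < \infty$. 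Writing the equation as $-\Delta U_n + (a + \phi_{U_n})U_n = \lambda U_n^{p-1} + U_n^5$, this fits the framework of Lemma~\ref{2.4.}; the local bound in Lemma~\ref{2.4.}(i) reduces to verifying uniform integrability of $\{|U_n|^6\}$ on bounded sets, and the uniform tail bound in Lemma~\ref{2.4.}(ii) to integrability at infinity. If uniform integrability failed at some point, a Lemma~\ref{2.6.}-type rescaling would produce a nontrivial solution of $-\Delta w = w^5$ in $D^{1,2}(\mathbb{R}^3)$ with energy at least $\frac{1}{3} S^{3/2}$; combined via Brezis-Lieb with $I_a(U) \ge 0$ (since either $U = 0$ or $U \in M_a$ and hence $I_a(U) \ge c_a$), this yields $c_a \ge \frac{1}{3} S^{3/2}$, contradicting Lemma~\ref{3.4.}. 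Standard elliptic regularity then upgrades $U_n \to U$ to $C^1_{\mathrm{loc}}$ convergence, which combined with the uniform decay is uniform on $\mathbb{R}^3$. Letting $n \to \infty$ in $U_n(0) \ge c_0$ gives $U(0) \ge c_0 > 0$, so $U \not\equiv 0$.

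For the strong convergence and membership in $S_a$, since $U$ is a nontrivial weak solution, Remark~\ref{3.1.} places $U \in M_a$, whence $I_a(U) \ge c_a$ by \eqref{3.13}. Weak lower semicontinuity applied to the identity
\[
I_a(v) - \frac{1}{3} G_a(v) = \frac{a}{3}\|v\|_2^2 + \frac{2p-6}{3p}\lambda \int_{\mathbb{R}^3}(v^+)^p + \frac{1}{3}\int_{\mathbb{R}^3}(v^+)^6
\]
(Fatou for the last two terms) yields $I_a(U) \le \liminf_n I_a(U_n) = c_a$, hence $I_a(U) = c_a$ and $U$ is a ground state. Setting $w_n := U_n - U$, the Brezis-Lieb lemma together with Lemma~\ref{2.2.} produces $I_a(w_n) \to 0$, $G_a(w_n) \to 0$, and $I_a'(w_n) \to 0$ in $H^{-1}$; the identity
\[
I_a(v) - \frac{1}{2p-3} G_a(v) = \frac{p-3}{2p-3}\|\nabla v\|_2^2 + \frac{p-2}{2p-3} a\|v\|_2^2 + \frac{p-3}{2(2p-3)} \int_{\mathbb{R}^3}\phi_v v^2 + \frac{6-p}{3(2p-3)}\int_{\mathbb{R}^3}(v^+)^6
\]
has strictly positive coefficients for $3 < p \le 4$, forcing $\|w_n\|_{H^1} \to 0$. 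Finally, the uniform convergence $U_n \to U$ together with $U_n(0) = \max U_n$ gives $U(0) = \max U$, so $U \in S_a$.

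The main obstacle is the uniform $L^\infty$ estimate: at the critical Sobolev exponent, $L^6$-concentration is a priori possible, and excluding it requires leveraging the strict energy gap $c_a < \frac{1}{3} S^{3/2}$ from Lemma~\ref{3.4.} through a delicate Brezis-Lieb split between the weak limit and any concentration bubble.
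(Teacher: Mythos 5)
Your proposal is correct in substance and follows the same high-level strategy as the paper (boundedness from the identity $I_a - \tfrac{1}{2p-3}G_a$, weak limit, nontriviality via the strict energy gap $c_a < \tfrac{1}{3}S^{3/2}$ of Lemma~\ref{3.4.}, then strong convergence from the same identity). However, the mechanism you use to exclude concentration is genuinely different: you apply the Benci--Cerami bubble-extraction result (Lemma~\ref{2.6.}) to $v_n = U_n - U$ and combine it with a Brezis--Lieb energy splitting, whereas the paper invokes Lions' Concentration Compactness Principle II (measures $\mu,\nu$ with $S\nu_i^{1/3}\le\mu_i$) and tests the equation against $\psi_\rho U_k$ to obtain $\mu_i\le\nu_i$ and hence $\nu_i\ge S^{3/2}$. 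Your route is more in the spirit of the paper's own Section~4 analysis; the paper's is the more classical Lions approach. You also derive the uniform lower bound $U_n(0)\ge c_0>0$ directly from $G_a(U_n)=0$ and $I_a(U_n)=c_a>0$ (a purely variational argument), while the paper reads it off the PDE at the maximum point via $\Delta U_k(0)\le 0$ together with $\phi_{U_k}\ge 0$. A further point in your favor: you explicitly verify $U(0)=\max U$ (needed for $U\in S_a$), a step the paper leaves implicit. One small overreach: the appeal to Lemma~\ref{2.4.}(ii) and to convergence ``uniform on $\mathbb{R}^3$'' is unnecessary and its hypothesis (uniform integrability of $U_n^6$ near infinity) is not verified at that stage; local $C^1$ convergence (which you obtain from the uniform local $L^\infty$ bound and elliptic regularity) together with a.e.\ pointwise convergence already gives both $U(0)\ge c_0$ and $U(0)\ge U(x)$ a.e., which is all that is needed.
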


\begin{proof}
For any $U \in {S_a}$, we have
\[\begin{gathered}
  {c_a} = {I_a}(U) - \frac{1}
{{2p - 3}}{G_a}(U) \hfill \\
  {\text{ }} = \frac{{p - 3}}
{{2p - 3}}\int_{{\mathbb{R}^3}} {|\nabla U{|^2}}  + \frac{{p - 2}}
{{2p - 3}}a\int_{{\mathbb{R}^3}} {{U^2}}  + \frac{{p - 3}}
{{2(2p - 3)}}\int_{{\mathbb{R}^3}} {{\phi _U}{U^2}}  + \frac{{6 - p}}
{{3(2p - 3)}}\int_{{\mathbb{R}^3}} {{U^6}} . \hfill \\
\end{gathered} \]
Thus $S_a$ is bounded in ${H^1}({\mathbb{R}^3})$.

For any sequence $\{ {U_k}\}  \subset {S_a}$, up to a subsequence, we may assume that there is a ${U_0} \in {H^1}({\mathbb{R}^3})$ such that
\begin{equation}\label{3.27}
{U_k} \rightharpoonup {U_0}{\text{ in }}{H^1}({\mathbb{R}^3})
\end{equation}
and ${U_0}$ satisfies
\[
- \Delta {U_0} + a{U_0} + {\phi _{{U_0}}}{U_0} = \lambda U_0^{p - 1} + U_0^5{\text{ in }}{\mathbb{R}^3},{\text{ }}{U_0} \ge 0.
\]
Next, we will show that ${U_0}$ is nontrivial. First, we claim that, up to a subsequence,
\begin{equation}\label{3.28}
{U_k} \to {U_0}{\text{ in }}L_{{\text{loc}}}^6({\mathbb{R}^3}).
\end{equation}
Indeed, in view of \eqref{3.27}, we may assume that
\[
|\nabla {U_k}{|^2} \rightharpoonup |\nabla {U_0}{|^2} + \mu {\text{ and }}U_k^6 \rightharpoonup U_0^6 + \nu ,
\]
where $\mu$ and $\nu$ are two bounded nonnegative measures on ${\mathbb{R}^3}$.  By the Concentration Compactness Principle II (Lemma 1.1 of \cite{l3}), we obtain an at most countable index set $\Gamma $, sequence $\{ {x_i}\}  \subset {\mathbb{R}^3}$ and $\{ {{\mu _i}} \},\{ {{\nu _i}} \} \subset ( {0,\infty } )$ such that
\begin{equation}\label{3.29}
\mu  \ge \sum\limits_{i \in \Gamma } {{\mu _i}} {\delta
_{{x_i}}},\nu = \sum\limits_{i \in \Gamma } {{\nu _i}} {\delta
_{{x_i}}}{\text{ and }}S{( {{\nu _i}} )^{\frac{1} {3}}} \le {\mu
_i}.
\end{equation}
It suffices to show that for any bounded domain $\Omega $, ${\{ {x_i}\} _{i \in \Gamma }} \cap \Omega  = \emptyset$. Suppose, by contradiction, that ${x_i} \in \Omega $ for some $i \in \Gamma $. Define, for $\rho  > 0$, the function ${\psi _\rho }( x ): = \psi ( {\frac{{x - {x_i}}}
{\rho }} )$ where $\psi$ is a smooth cut-off function such that $\psi  = 1$ on ${B_1}( 0 )$, $\psi  = 0$ on ${\mathbb{R}^3}\backslash {B_2}(0)$, $0 \le \psi  \le 1$ and $| {\nabla \psi } | \le C$. We suppose that $\rho $ is chosen in such a way that the support of ${\psi _\rho }$ is contained in $\Omega $. Using $\left\langle {{{I'}_a}({U_k}),{\psi _\rho }{U_k}} \right\rangle  = 0$, we see
\begin{equation}\label{3.30}
\begin{gathered}
  \int_{{\mathbb{R}^3}} {|\nabla {U_k}{|^2}{\psi _\rho }}  + \int_{{\mathbb{R}^3}} {(\nabla {U_k}\cdot\nabla {\psi _\rho }){U_k}}  + a\int_{{\mathbb{R}^3}} {U_k^2{\psi _\rho }}  + \int_{{\mathbb{R}^3}} {{\phi _{{U_k}}}U_k^2{\psi _\rho }}  \hfill \\
  {\text{ }} = \lambda \int_{{\mathbb{R}^3}} {U_k^p{\psi _\rho }}  + \int_{{\mathbb{R}^3}} {U_k^6{\psi _\rho }} . \hfill \\
\end{gathered}
\end{equation}
Since
\begin{equation}\label{3.31}
\begin{gathered}
  \mathop {\overline {\lim } }\limits_{k \to \infty } \left| {\int_{{\mathbb{R}^3}} {(\nabla {U_k}\cdot\nabla {\psi _\rho }){U_k}} } \right| \le \mathop {\overline {\lim } }\limits_{k \to \infty } {\Bigl( {\int_{{\mathbb{R}^3}} {|\nabla {U_k}{|^2}} } \Bigr)^{\frac{1}
{2}}}\cdot{\Bigl( {\int_{{\mathbb{R}^3}} {U_k^2|\nabla {\psi _\rho }{|^2}} } \Bigr)^{\frac{1}
{2}}} \hfill \\
   \le C{\Bigl( {\int_{{\mathbb{R}^3}} {U_0^2|\nabla {\psi _\rho }{|^2}} } \Bigr)^{\frac{1}
{2}}} \le C{\Bigl( {\int_{{B_{2\rho }}({x_i})} {U_0^6} } \Bigr)^{\frac{1}
{6}}}{\Bigl( {\int_{{B_{2\rho }}({x_i})} {|\nabla {\psi _\rho }{|^3}} } \Bigr)^{\frac{1}
{3}}} \hfill \\
   \le C{\Bigl( {\int_{{B_{2\rho }}({x_i})} {U_0^6} } \Bigr)^{\frac{1}
{6}}} \to 0{\text{ as }}\rho  \to 0, \hfill \\
\end{gathered}
\end{equation}
\begin{equation}\label{3.32}
\mathop {\overline {\lim } }\limits_{k \to \infty } \int_{{\mathbb{R}^3}} {|\nabla {U_k}{|^2}{\psi _\rho }}  \ge \int_{{\mathbb{R}^3}} {|\nabla {U_0}{|^2}{\psi _\rho }}  + {\mu _i} \to {\mu _i}{\text{ as }}\rho  \to 0,
\end{equation}
\begin{equation}\label{3.33}
\mathop {\overline {\lim } }\limits_{k \to \infty } \lambda \int_{{\mathbb{R}^3}} {U_k^p{\psi _\rho }}  = \lambda \int_{{\mathbb{R}^3}} {U_0^p{\psi _\rho }}  \to 0{\text{ as }}\rho  \to 0,
\end{equation}
and
\begin{equation}\label{3.34}
\mathop {\overline {\lim } }\limits_{k \to \infty } \int_{{\mathbb{R}^3}} {U_k^6{\psi _\rho }}  = \int_{{\mathbb{R}^3}} {U_0^6{\psi _\rho }}  + {\nu _i} \to {\nu _i}{\text{ as }}\rho  \to 0.
\end{equation}
We obtain from \eqref{3.30} that ${\mu _i} \le {\nu _i}$. Combining with \eqref{3.29}, we have ${\nu _i} \ge {S^{3/2}}$. On the other hand,
\[
{c_a} = {I_a}({U_k}) - \frac{1}
{3}{G_a}({U_k}) = \frac{1}
{3}a\int_{{\mathbb{R}^3}} {U_k^2}  + \frac{{2p - 6}}
{{3p}}\int_{{\mathbb{R}^3}} {U_k^p}  + \frac{1}
{3}\int_{{\mathbb{R}^3}} {U_k^6}  \ge \frac{1}
{3}{\nu _i} \ge \frac{1}
{3}{S^{\frac{3}
{2}}},\]
which contradicts to Lemma~\ref{3.4.}, then \eqref{3.28} holds.

From \eqref{3.28}, $\{ U_k^6\} $ is uniformly integrable in any bounded domain in ${{\mathbb{R}^3}}$. By Lemma~\ref{2.4.} (i), ${\left\| {{U_k}} \right\|_{L_{{\text{loc}}}^\infty ({\mathbb{R}^3})}} \le C$. In view of \cite{t}, $\exists \alpha  \in (0,1)$ such that ${\left\| {{U_k}} \right\|_{C_{{\text{loc}}}^{1,\alpha }({\mathbb{R}^3})}} \le C$, and using Schauder's estimate, we have
\[
{\left\| {{U_k}} \right\|_{C_{{\text{loc}}}^{2,\alpha }({\mathbb{R}^3})}} \le C.
\]
By the Arzela-Ascoli's Theorem, we have
\[
{U_k}(0) \to {U_0}(0){\text{ as }}k \to \infty .
\]
Since $\Delta {U_k}(0) \le 0$, from \eqref{3.1}, we can check that $\exists b > 0$ such that ${U_k}(0) \ge b > 0$, then $ {U_0}(0) \ge b > 0$, this means that ${U_0}$ is nontrivial.

Since
\[\begin{gathered}
  {c_a} \le {I_a}({U_0}) - \frac{1}
{{2p - 3}}{G_a}({U_0}) \hfill \\
  {\text{ }} = \frac{{p - 3}}
{{2p - 3}}\int_{{\mathbb{R}^3}} {|\nabla {U_0}{|^2}}  + \frac{{p - 2}}
{{2p - 3}}a\int_{{\mathbb{R}^3}} {U_0^2}  + \frac{{p - 3}}
{{2(2p - 3)}}\int_{{\mathbb{R}^3}} {{\phi _{{U_0}}}U_0^2}  + \frac{{6 - p}}
{{3(2p - 3)}}\int_{{\mathbb{R}^3}} {U_0^6}  \hfill \\
  {\text{ }} = \mathop {\underline {\lim } }\limits_{k \to \infty } \frac{{p - 3}}
{{2p - 3}}\int_{{\mathbb{R}^3}} {|\nabla {U_k}{|^2}}  + \frac{{p - 2}}
{{2p - 3}}a\int_{{\mathbb{R}^3}} {U_k^2}  + \frac{{p - 3}}
{{2(2p - 3)}}\int_{{\mathbb{R}^3}} {{\phi _{{U_k}}}U_k^2}  + \frac{{6 - p}}
{{3(2p - 3)}}\int_{{\mathbb{R}^3}} {U_k^6}  \hfill \\
  {\text{ }} =\mathop {\underline {\lim } }\limits_{k \to \infty }\Bigl[ {I_a}({U_k}) - \frac{1}
{{2p - 3}}{G_a}({U_k})\Bigr] = {c_a}, \hfill \\
\end{gathered} \]
which means that ${I_a}({U_0}) = {c_a}$ and ${U_k} \to {U_0}$ in ${H^1}({\mathbb{R}^3})$. This completes the proof that $S_a$ is compact in ${H^1}({\mathbb{R}^3})$.
\end{proof}

\section{Proof of Theorem~\ref{1.1.}}
\eqref{1.1} can be rewritten as
\begin{equation}\label{4.1}
\left\{ \begin{gathered}
   - \Delta v + V(\varepsilon x)v + \phi v = \lambda |v{|^{p - 2}}v + |v{|^4}v{\text{ in }}{\mathbb{R}^3}, \hfill \\
   - \Delta \phi  = {v^2}{\text{ in }}{\mathbb{R}^3},{\text{ }}v > 0,{\text{ }}v \in {H^1}({\mathbb{R}^3}) \hfill \\
\end{gathered}  \right.
\end{equation}
and the corresponding energy functional is
\[
{I_\varepsilon }(v) = \frac{1}
{2}\int_{{\mathbb{R}^3}} {|\nabla v{|^2}}  + \frac{1}
{2}\int_{{\mathbb{R}^3}} {V(\varepsilon x){v^2}}  + \frac{1}
{4}\int_{{\mathbb{R}^3}} {{\phi _v}{v^2}}  - \frac{1}
{p}\lambda \int_{{\mathbb{R}^3}} {{{({v^ + })}^p}}  - \frac{1}
{6} \int_{{\mathbb{R}^3}} {{{({v^ + })}^6}} ,{\text{ }}v \in {H_\varepsilon },
\]
where ${H_\varepsilon }: = \{ {v \in {H^1}({\mathbb{R}^3})| {\int_{{\mathbb{R}^3}} {V(\varepsilon x){v^2}}  < \infty } } \}$ endowed with the norm
\[
{\left\| v \right\|_{{H_\varepsilon }}}: = {\Bigl( {\int_{{\mathbb{R}^3}} {|\nabla v{|^2}}  + \int_{{\mathbb{R}^3}} {V(\varepsilon x){v^2}} } \Bigr)^{1/2}}.
\]
We define
\[
{\chi _\varepsilon }(x) = \left\{ \begin{gathered}
  0{\text{ if }}x \in \Lambda /\varepsilon , \hfill \\
  {\varepsilon ^{ - 1}}{\text{ if }}x \notin \Lambda /\varepsilon  \hfill \\
\end{gathered}  \right.
\]
and
\[
{Q_\varepsilon }(v) = \Bigl( {\int_{{\mathbb{R}^3}} {{\chi _\varepsilon }{v^2}}  - 1} \Bigr)_ + ^2.
\]
Finally, set ${J_\varepsilon }:{H_\varepsilon } \to \mathbb{R}$ be given by
\[
{J_\varepsilon }(v) = {I_\varepsilon }(v) + {Q_\varepsilon }(v).
\]
Note that this type of penalization was firstly introduced in \cite{bw}. It is standard to show that ${J_\varepsilon } \in {C^1}({H_\varepsilon },\mathbb{R})$. To find solutions of \eqref{4.1} which concentrate around the local minimum of $V$ in $\Lambda $ as $\varepsilon  \to 0$, we shall search critical points of ${J_\varepsilon }$ for which ${Q_\varepsilon }$ is zero.

Let ${c_{{V_0}}} = {I_{{V_0}}}(w)$ for $w \in {S_{{V_0}}}$ and $10\delta  = {\text{dist}}\{ \mathcal{M},{\mathbb{R}^3}\backslash \Lambda \} $, we fix a $\beta  \in (0,\delta )$ and a cut-off function $\varphi  \in C_c^\infty ({\mathbb{R}^3})$ such that $0 \le \varphi  \le 1$, $\varphi (x) = 1$ for $|x| \le \beta $, $\varphi (x) = 0$ for $|x| \ge 2\beta $ and $|\nabla \varphi | \le C/\beta $. We will find a solution of \eqref{4.1} near the set
\[
{X_\varepsilon }: = \Bigl\{ {\varphi (\varepsilon x - x')w\bigl( {x - \frac{{x'}}
{\varepsilon }} \bigr):x' \in {\mathcal{M}^\beta },w \in {S_{{V_0}}}} \Bigr\}
\]
for sufficiently small $\varepsilon  > 0$, where ${\mathcal{M}^\beta }: = \{ y \in {\mathbb{R}^3}:\mathop {\inf }\limits_{z \in \mathcal{M}} |y - z| \le \beta \} $. Similarly, for $A \subset {H_\varepsilon }$, we use the notation
\[
{A^a}: = \bigl\{ {u \in {H_\varepsilon }:\mathop {\inf }\limits_{v \in A} {{\left\| {u - v} \right\|}_{{H_\varepsilon }}} \le a} \bigr\}.
\]
For ${U^ * } \in {S_{{V_0}}}$ arbitrary but fixed, we define ${W_{\varepsilon ,t}}(x): = {t^2}\varphi (\varepsilon x){U^ * }(tx)$, we will show that ${J_\varepsilon }$ possesses the Mountain-Pass geometry.

Denote $U_t^ * : = {t^2}{U^ * }(tx)$, we have
\[\begin{gathered}
  {\text{  }}{I_{{V_0}}}(U_t^ * ) \hfill \\
   = \frac{1}
{2}{t^3}\int_{{\mathbb{R}^3}} {|\nabla {U^ * }{|^2}}  + \frac{1}
{2}{V_0}t\int_{{\mathbb{R}^3}} {{{({U^ * })}^2}}  + \frac{1}
{4}{t^3}\int_{{\mathbb{R}^3}} {{\phi _{{U^ * }}}{{({U^ * })}^2}}  \hfill \\
  {\text{  }} - \frac{1}
{p}\lambda {t^{2p - 3}}\int_{{\mathbb{R}^3}} {{{({U^ * })}^p}}  - \frac{1}
{6}{t^9}\int_{{\mathbb{R}^3}} {{{({U^ * })}^6}}  \hfill \\
   \to  - \infty {\text{ as }}t \to \infty , \hfill \\
\end{gathered} \]
then $\exists {t_0} > 0$ such that ${I_{{V_0}}}(U_{{t_0}}^ * ) <  - 3$.

We can easily check that ${Q_\varepsilon }({W_{\varepsilon ,{t_0}}}) = 0$, then
\begin{equation}\label{4.2}
\begin{gathered}
  {\text{  }}{J_\varepsilon }({W_{\varepsilon ,{t_0}}}) \hfill \\
   = {I_\varepsilon }({W_{\varepsilon ,{t_0}}}) \hfill \\
   = \frac{1}
{2}\int_{{\mathbb{R}^3}} {|\nabla {W_{\varepsilon ,{t_0}}}{|^2}}  + \frac{1}
{2}\int_{{\mathbb{R}^3}} {V(\varepsilon x)W_{\varepsilon ,{t_0}}^2}  + \frac{1}
{4}\int_{{\mathbb{R}^3}} {{\phi _{{W_{\varepsilon ,{t_0}}}}}W_{\varepsilon ,{t_0}}^2}  - \frac{1}
{p}\lambda \int_{{\mathbb{R}^3}} {W_{\varepsilon ,{t_0}}^p}  - \frac{1}
{6}\int_{{\mathbb{R}^3}} {W_{\varepsilon ,{t_0}}^6}  \hfill \\
  \mathop  = \limits^{\tilde x = {t_0}x} \frac{1}
{2}t_0^3\int_{{\mathbb{R}^3}} {{{\Bigl| {\frac{\varepsilon }
{{{t_0}}}\nabla \varphi \bigl( {\frac{\varepsilon }
{{{t_0}}}\tilde x} \bigr){U^ * }(\tilde x) + \varphi \bigl( {\frac{\varepsilon }
{{{t_0}}}\tilde x} \bigr)\nabla {U^ * }(\tilde x)} \Bigr|}^2}} d\tilde x \hfill \\
   + \frac{1}
{2}{t_0}\int_{{\mathbb{R}^3}} {V\bigl( {\frac{\varepsilon }
{{{t_0}}}\tilde x} \bigr){\varphi ^2}\bigl( {\frac{\varepsilon }
{{{t_0}}}\tilde x} \bigr)({U^ * }(\tilde x)} {)^2} + \frac{1}
{4}t_0^3\int_{{\mathbb{R}^3}} {{\phi _{\varphi ( {\frac{\varepsilon }
{{{t_0}}}\tilde x} ){U^ * }(\tilde x)}}{\varphi ^2}\bigl( {\frac{\varepsilon }
{{{t_0}}}\tilde x} \bigr){{({U^ * }(\tilde x))}^2}}  \hfill \\
   - \frac{1}
{p}\lambda t_0^{2p - 3}\int_{{\mathbb{R}^3}} {{\varphi ^p}\bigl( {\frac{\varepsilon }
{{{t_0}}}\tilde x} \bigr){{({U^ * }(\tilde x))}^p}}  - \frac{1}
{6}t_0^9\int_{{\mathbb{R}^3}} {{\varphi ^6}\bigl( {\frac{\varepsilon }
{{{t_0}}}\tilde x} \bigr){{({U^ * }(\tilde x))}^6}}  \hfill \\
   = {I_{{V_0}}}(U_{{t_0}}^ * ) + o(1) <  - 2{\text{ for }}\varepsilon  > 0{\text{ small,}} \hfill \\
\end{gathered}
\end{equation}
where we have used the Dominated Convergence Theorem and Lemma~\ref{2.2.} (i).

Using the Sobolev's Imbedding Theorem, we have
\[\begin{gathered}
  {\text{  }}{J_\varepsilon }(u) \hfill \\
   \ge {I_\varepsilon }(u) \hfill \\
   \ge \frac{1}
{2}\left\| u \right\|_{{H_\varepsilon }}^2 - \frac{1}
{p}\lambda \int_{{\mathbb{R}^3}} {|u{|^p}}  - \frac{1}
{6}\int_{{\mathbb{R}^3}} {|u{|^6}}  \hfill \\
   \ge \frac{1}
{2}\left\| u \right\|_{{H_\varepsilon }}^2 - C \cdot \lambda \left\| u \right\|_{{H_\varepsilon }}^p - C\left\| u \right\|_{{H_\varepsilon }}^6 > 0 \hfill \\
\end{gathered} \]
for ${\left\| u \right\|_{{H_\varepsilon }}}$ small since $p>2$.

Hence, we can define the Mountain-Pass value of ${J_\varepsilon }$ as follows,
\[
{c_\varepsilon }: = \mathop {\inf }\limits_{\gamma  \in {\Gamma _\varepsilon }} \mathop {\max }\limits_{s \in [0,1]} {J_\varepsilon }(\gamma (s))
\]
where ${\Gamma _\varepsilon }: = \{ \gamma  \in C([0,1],{H_\varepsilon })|\gamma (0) = 0,{\text{ }}\gamma (1) = {W_{\varepsilon ,{t_0}}}\} $.

\begin{lemma}\label{4.1.}
\[
\overline {\mathop {\lim }\limits_{\varepsilon  \to 0} } {c_\varepsilon } \le {c_{{V_0}}}.
\]
\end{lemma}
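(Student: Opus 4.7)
The plan is to construct an explicit admissible path $\gamma_\varepsilon\in\Gamma_\varepsilon$ whose maximal $J_\varepsilon$-value converges to $c_{V_0}$ as $\varepsilon\to 0$. With the fixed ground state $U^{*}\in S_{V_0}$ used in the construction of $W_{\varepsilon,t_0}$, I would take
$$\gamma_\varepsilon(s):=W_{\varepsilon,st_0},\quad s\in[0,1].$$
Continuity of $s\mapsto\gamma_\varepsilon(s)$ in $H_\varepsilon$ is routine, $\gamma_\varepsilon(0)=0$, and $\gamma_\varepsilon(1)=W_{\varepsilon,t_0}$ satisfies $J_\varepsilon(\gamma_\varepsilon(1))<-2$ for small $\varepsilon$ by \eqref{4.2}, so $\gamma_\varepsilon\in\Gamma_\varepsilon$. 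Since $\mathrm{supp}\,\varphi\subset B_{2\beta}(0)$, $\beta<\delta=\tfrac{1}{10}\mathrm{dist}(\mathcal{M},\mathbb{R}^3\setminus\Lambda)$ and $0\in\mathcal{M}$, one has $B_{2\beta}(0)\subset\Lambda$, hence $\mathrm{supp}\,W_{\varepsilon,t}\subset\Lambda/\varepsilon$ for every $t\in[0,t_0]$ and every small $\varepsilon$. Consequently $\chi_\varepsilon\equiv 0$ on this support, $Q_\varepsilon(\gamma_\varepsilon(s))\equiv 0$, and
$$c_\varepsilon\le \max_{s\in[0,1]}J_\varepsilon(\gamma_\varepsilon(s))=\max_{t\in[0,t_0]}I_\varepsilon(W_{\varepsilon,t}).$$

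The next step is to compare $I_\varepsilon(W_{\varepsilon,t})$ with the scaled limit functional $I_{V_0}(U^{*}_{t})$, where $U^{*}_{t}(x)=t^2U^{*}(tx)$. For each fixed $t>0$, the change of variable $y=tx$ (exactly as performed in \eqref{4.2} for $t=t_0$) rewrites $I_\varepsilon(W_{\varepsilon,t})$ so that it differs from $I_{V_0}(U^{*}_{t})$ only through integrands containing $\varphi(\varepsilon y/t)-1$, $(\varepsilon/t)\nabla\varphi(\varepsilon y/t)$ and $V(\varepsilon y/t)-V_0$. Continuity of $V$ at $0\in\mathcal{M}$ (with $V(0)=V_0$), $\varphi(0)=1$, the integrability $U^{*}\in H^1(\mathbb{R}^3)\cap L^p(\mathbb{R}^3)\cap L^6(\mathbb{R}^3)$, and Lemma~\ref{2.1.}(i) to handle the Poisson contribution, let the Dominated Convergence Theorem give $I_\varepsilon(W_{\varepsilon,t})\to I_{V_0}(U^{*}_{t})$ as $\varepsilon\to 0$ for each fixed $t>0$. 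Moreover, a direct estimate yields $\|W_{\varepsilon,t}\|_{H^1(\mathbb{R}^3)}\le C(t^{1/2}+t^{3/2})$ uniformly in $\varepsilon\in(0,1]$, so $I_\varepsilon(W_{\varepsilon,t})\to 0$ uniformly in $\varepsilon$ as $t\to 0^+$; on any compact sub-interval $[t_1,t_0]\subset(0,t_0]$ the dominations above are $t$-uniform and the pointwise convergence $\varphi(\varepsilon y/t)\to 1$, $\varepsilon\nabla\varphi(\varepsilon y/t)\to 0$ is uniform in $t\in[t_1,t_0]$, yielding uniform-in-$t$ convergence $I_\varepsilon(W_{\varepsilon,t})\to I_{V_0}(U^{*}_{t})$ on that interval.

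The main obstacle is to combine these two regimes into a single upper estimate on the maximum. Given $\eta>0$, I would first pick $t_1\in(0,t_0)$ so small that $\sup_{\varepsilon\le 1}\sup_{t\in[0,t_1]}I_\varepsilon(W_{\varepsilon,t})<\eta/2$; then, for $\varepsilon$ sufficiently small, uniform convergence on $[t_1,t_0]$ gives $\sup_{t\in[t_1,t_0]}I_\varepsilon(W_{\varepsilon,t})\le \sup_{t\in[t_1,t_0]}I_{V_0}(U^{*}_{t})+\eta/2$, so that
$$\max_{t\in[0,t_0]}I_\varepsilon(W_{\varepsilon,t})\le \max_{t>0}I_{V_0}(U^{*}_{t})+\eta.$$
Since $U^{*}\in S_{V_0}\subset M_{V_0}$ by Remark~\ref{3.1.}, Lemma~\ref{3.2.} forces $\max_{t>0}I_{V_0}(U^{*}_{t})=I_{V_0}(U^{*})=c_{V_0}$; letting $\eta\to 0$ yields $\limsup_{\varepsilon\to 0}c_\varepsilon\le c_{V_0}$, which is the claim.
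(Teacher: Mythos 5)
Your proof is correct and follows the same route as the paper: take $\gamma_\varepsilon(s)=W_{\varepsilon,st_0}$, observe $Q_\varepsilon\equiv 0$ along it, and compare $\max_{t\in[0,t_0]}J_\varepsilon(W_{\varepsilon,t})$ with $\max_{t>0}I_{V_0}(U^*_t)=c_{V_0}$. In fact you are more careful than the paper's one-line ``similar to \eqref{4.2}'': the substitution $\tilde x=tx$ produces $\varphi(\varepsilon\tilde x/t)$, so the convergence to $I_{V_0}(U^*_t)$ is not uniform down to $t=0$, and your split between a small-$t$ regime (controlled by $\|W_{\varepsilon,t}\|_{H^1}\le C(t^{1/2}+t^{3/2})$) and a compact interval $[t_1,t_0]$ (where $\varepsilon/t\le\varepsilon/t_1\to 0$ uniformly) is exactly the missing justification for the paper's implicit uniformity of the $o(1)$ term.
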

\begin{proof}
Denote ${W_{\varepsilon ,0}} = \mathop {\lim }\limits_{t \to 0} {W_{\varepsilon ,t}}$ in ${H_\varepsilon }$ sense, then ${W_{\varepsilon ,0}} = 0$. Thus, setting $\gamma (s): = {W_{\varepsilon ,s{t_0}}}(0 \le s \le 1)$, we have $\gamma (s) \in {\Gamma _\varepsilon }$, then
\[
{c_\varepsilon } \le \mathop {\max }\limits_{s \in [0,1]} {J_\varepsilon }(\gamma (s)) = \mathop {\max }\limits_{t \in [0,{t_0}]} {J_\varepsilon }({W_{\varepsilon ,t}})
\]
and we just need to verify that
\[
\overline {\mathop {\lim }\limits_{\varepsilon  \to 0} } \mathop {\max }\limits_{t \in [0,{t_0}]} {J_\varepsilon }({W_{\varepsilon ,t}}) \le {c_{{V_0}}}.
\]
Indeed, similar to \eqref{4.2}, we have
\[\begin{gathered}
  {\text{ }}\mathop {\max }\limits_{t \in [0,{t_0}]} {J_\varepsilon }({W_{\varepsilon ,t}}) = \mathop {\max }\limits_{t \in [0,{t_0}]} {I_{{V_0}}}(U_t^ * ) + o(1) \hfill \\
   \le \mathop {\max }\limits_{t \in [0,\infty )} {I_{{V_0}}}(U_t^ * ) + o(1) = {I_{{V_0}}}({U^ * }) + o(1) = {c_{{V_0}}} + o(1). \hfill \\
\end{gathered} \]
\end{proof}

\begin{lemma}\label{4.2.}
\[
\mathop {\underline {\lim } }\limits_{\varepsilon  \to 0} {c_\varepsilon } \ge {c_{{V_0}}}.
\]
\end{lemma}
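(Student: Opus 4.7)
The plan is to argue by contradiction. Suppose there exist $\varepsilon_n \to 0$ and $\delta > 0$ with $c_{\varepsilon_n} \leq c_{V_0} - \delta$ for all large $n$. For each $n$ I pick a near-optimal path $\gamma_n \in \Gamma_{\varepsilon_n}$ satisfying $\max_{s \in [0,1]} J_{\varepsilon_n}(\gamma_n(s)) \leq c_{\varepsilon_n} + 1/n$. The first step is to verify that this same path is also admissible for the mountain-pass value of the limit functional $I_{V_0}$: the endpoint $W_{\varepsilon_n, t_0}$ satisfies $I_{V_0}(W_{\varepsilon_n, t_0}) = I_{V_0}(U^*_{t_0}) + o(1) < -2$ for large $n$ by essentially the same Dominated Convergence computation as in \eqref{4.2} (with $V(\varepsilon x)$ replaced by $V_0$), and the continuous embedding $H_{\varepsilon_n} \hookrightarrow H^1(\mathbb{R}^3)$ coming from $V \geq \alpha > 0$ ensures $\gamma_n \in C([0,1], H^1(\mathbb{R}^3))$. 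Hence $\gamma_n \in \Gamma_{V_0}$, so by the very definition of $c_{V_0}$ there exists $s_n \in [0,1]$ with $I_{V_0}(u_n) \geq c_{V_0}$, where $u_n := \gamma_n(s_n)$.

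The key comparison step is to estimate $J_{\varepsilon_n}(u_n)$ from below by $I_{V_0}(u_n)$ up to a small error. Writing $A_n := \int_{\mathbb{R}^3 \setminus \Lambda/\varepsilon_n} u_n^2$ and using $V(\varepsilon_n x) \geq V_0$ on $\Lambda/\varepsilon_n$ together with $V_0 - V(\varepsilon_n x) \leq V_0 - \alpha$ elsewhere,
\[
I_{V_0}(u_n) - I_{\varepsilon_n}(u_n) = \tfrac{1}{2}\int_{\mathbb{R}^3}(V_0 - V(\varepsilon_n x)) u_n^2 \leq \tfrac{V_0-\alpha}{2} A_n.
\]
Substituting $I_{\varepsilon_n}(u_n) = J_{\varepsilon_n}(u_n) - Q_{\varepsilon_n}(u_n)$ and using $I_{V_0}(u_n) \geq c_{V_0}$ yields the master inequality
\[
c_{V_0} + Q_{\varepsilon_n}(u_n) \leq c_{\varepsilon_n} + \tfrac{1}{n} + \tfrac{V_0-\alpha}{2} A_n.
\]
I then split on the size of $A_n$. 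If $A_n \leq \varepsilon_n$, then $Q_{\varepsilon_n}(u_n) = 0$ and the right-hand side tends to $c_{V_0} - \delta$, a contradiction. If $A_n > \varepsilon_n$, set $B_n := \varepsilon_n^{-1} A_n > 1$, so that $Q_{\varepsilon_n}(u_n) = (B_n - 1)^2$ and the inequality rearranges to $(B_n - 1)^2 - \tfrac{V_0-\alpha}{2}\varepsilon_n B_n \leq -\delta/2$ for large $n$; but the quadratic term dominates the $O(\varepsilon_n)$ linear perturbation as $\varepsilon_n \to 0$, so the left-hand side is nonnegative up to $O(\varepsilon_n^2)$, again a contradiction.

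The main obstacle is that there is no global pointwise bound $V(\varepsilon_n x) \geq V_0$: one only has $V \geq \alpha$ globally and $\alpha$ may be strictly less than $V_0$, so the naive attempt $c_{\varepsilon_n} \geq c_{V_0}$ via $I_{\varepsilon_n} \geq I_{V_0}$ fails. This is precisely why the penalization $Q_{\varepsilon_n}$ was introduced: its quadratic growth in the $L^2$-mass of $u_n$ outside $\Lambda/\varepsilon_n$, switched on at the threshold $\varepsilon_n$, is tuned to dominate exactly the linear loss $\tfrac{V_0-\alpha}{2} A_n$ produced by mass escaping $\Lambda/\varepsilon_n$, and the balance between these two contributions is what drives the case analysis above.
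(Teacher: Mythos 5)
Your argument is correct and reaches the same conclusion as the paper's, but the bookkeeping is organized differently. The paper truncates the path at the first parameter $s_n$ where $I_{\varepsilon_n}(\gamma_n(s_n))=-1$; on $[0,s_n]$ the penalization $Q_{\varepsilon_n}(\gamma_n(s))$ is a priori bounded (since $J_{\varepsilon_n}\le c_{V_0}-\delta_0$ and $I_{\varepsilon_n}\ge-1$ there), which immediately yields a uniform $O(\varepsilon_n)$ bound on the $L^2$ mass outside $\Lambda/\varepsilon_n$ along that truncated portion, and hence an $O(\varepsilon_n)$ bound on $|I_{\varepsilon_n}-I_{V_0}|$ on $[0,s_n]$. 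The truncated path is then shown to be admissible for $c_{V_0}$, and the contradiction follows. You instead skip the truncation by observing that the original endpoint $W_{\varepsilon_n,t_0}$ already has $I_{V_0}(W_{\varepsilon_n,t_0})<0$, so the full path is admissible for $c_{V_0}$; you pick the point $u_n$ on it where $I_{V_0}(u_n)\ge c_{V_0}$, and carry $Q_{\varepsilon_n}(u_n)$ explicitly through the inequality. The resulting master inequality shows that the penalization term grows quadratically in $\varepsilon_n^{-1}A_n$ whereas the error term is only linear in $A_n$, forcing the contradiction after a case split on $A_n\lessgtr\varepsilon_n$. This is slightly cleaner because it never needs a priori control of $Q_{\varepsilon_n}$ along the path — the penalization is simply kept on the correct side of the inequality. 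One small slip: the minimum of $(B-1)^2-\tfrac{V_0-\alpha}{2}\varepsilon_n B$ over $B>1$ is $\approx-\tfrac{V_0-\alpha}{2}\varepsilon_n$, i.e.\ the left-hand side is nonnegative up to $O(\varepsilon_n)$, not $O(\varepsilon_n^2)$; this does not affect the argument since $O(\varepsilon_n)\to0$ is still incompatible with $\le-\delta/2$.
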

\begin{proof}
Assuming the contrary that $\mathop {\underline {\lim } }\limits_{\varepsilon  \to 0} {c_\varepsilon } < {c_{{V_0}}}$, then, there exist ${\delta _0} > 0$, ${\varepsilon _n} \to 0$ and ${\gamma _n} \in {\Gamma _{{\varepsilon _n}}}$ satisfying ${J_{{\varepsilon _n}}}({\gamma _n}(s)) < {c_{{V_0}}} - {\delta _0}$ for $s \in [0,1]$. We can fix an ${\varepsilon _n}$ such that
\begin{equation}\label{4.3}
\frac{1}
{2}{V_0}{\varepsilon _n}(1 + {(1 + {c_{{V_0}}})^{1/2}}) < \min \{ {\delta _0},1\} .
\end{equation}
Since ${I_{{\varepsilon _n}}}({\gamma _n}(0)) = 0$ and ${I_{{\varepsilon _n}}}({\gamma _n}(1)) \le {J_{{\varepsilon _n}}}({\gamma _n}(1)) = {J_{{\varepsilon _n}}}({W_{{\varepsilon _n},{t_0}}}) <  - 2$, we can find an ${s_n} \in (0,1)$ such that ${I_{{\varepsilon _n}}}({\gamma _n}(s)) \ge  - 1$ for $s \in [0,{s_n}]$ and ${I_{{\varepsilon _n}}}({\gamma _n}({s_n})) =  - 1$. Then, for any $s \in [0,{s_n}]$,
\[
{Q_{{\varepsilon _n}}}({\gamma _n}(s)) = {J_{{\varepsilon _n}}}({\gamma _n}(s)) - {I_{{\varepsilon _n}}}({\gamma _n}(s)) \le 1 + {c_{{V_0}}} - {\delta _0},
\]
this implies that
\[
\int_{{\mathbb{R}^3}\backslash (\Lambda /{\varepsilon _n})} {\gamma _n^2(s)}  \le {\varepsilon _n}(1 + {(1 + {c_{{V_0}}})^{1/2}}){\text{ for }}s \in [0,{s_n}].
\]
Then, for $s \in [0,{s_n}]$,
\[\begin{gathered}
  {\text{   }}{I_{{\varepsilon _n}}}({\gamma _n}(s)) \hfill \\
   = {I_{{V_0}}}({\gamma _n}(s)) + \frac{1}
{2}\int_{{\mathbb{R}^3}} {(V({\varepsilon _n}x) - {V_0})\gamma _n^2(s)}  \hfill \\
   \ge {I_{{V_0}}}({\gamma _n}(s)) + \frac{1}
{2}\int_{{\mathbb{R}^3}\backslash (\Lambda /{\varepsilon _n})} {(V({\varepsilon _n}x) - {V_0})\gamma _n^2(s)}  \hfill \\
   \ge {I_{{V_0}}}({\gamma _n}(s)) - \frac{1}
{2}{V_0}{\varepsilon _n}(1 + {(1 + {c_{{V_0}}})^{1/2}}), \hfill \\
\end{gathered} \]
then
\[
\begin{gathered}
  {I_{{V_0}}}({\gamma _n}({s_n})) \le {I_{{\varepsilon _n}}}({\gamma _n}({s_n})) + \frac{1}
{2}{V_0}{\varepsilon _n}(1 + {(1 + {c_{{V_0}}})^{1/2}}) \hfill \\
  {\text{ }} =  - 1 + \frac{1}
{2}{V_0}{\varepsilon _n}(1 + {(1 + {c_{{V_0}}})^{1/2}}) < 0 \hfill \\
\end{gathered}
\]
and recalling \eqref{3.4}, we have
\[
\mathop {\max }\limits_{s \in [0,{s_n}]} {I_{{V_0}}}({\gamma _n}(s)) \ge {c_{{V_0}}}.
\]
Hence, we deduce that
\[\begin{gathered}
  {c_{{V_0}}} - {\delta _0} \ge \mathop {\max }\limits_{s \in [0,1]} {J_{{\varepsilon _n}}}({\gamma _n}(s)) \ge \mathop {\max }\limits_{s \in [0,1]} {I_{{\varepsilon _n}}}({\gamma _n}(s)) \ge \mathop {\max }\limits_{s \in [0,{s_n}]} {I_{{\varepsilon _n}}}({\gamma _n}(s)) \hfill \\
   \ge \mathop {\max }\limits_{s \in [0,{s_n}]} {I_{{V_0}}}({\gamma _n}(s)) - \frac{1}
{2}{V_0}{\varepsilon _n}(1 + {(1 + {c_{{V_0}}})^{1/2}}), \hfill \\
\end{gathered} \]
i.e. $0 < {\delta _0} \le \frac{1}
{2}{V_0}{\varepsilon _n}(1 + {(1 + {c_{{V_0}}})^{1/2}})$, which contradicts to \eqref{4.3}.
\end{proof}

Lemma~\ref{4.1.} and Lemma~\ref{4.2.} imply that
\[
\mathop {\lim }\limits_{\varepsilon  \to 0} \bigl( {\mathop {\max }\limits_{s \in [0,1]} {J_\varepsilon }({\gamma _\varepsilon }(s)) - {c_\varepsilon }} \bigr) = 0,
\]
where ${\gamma _\varepsilon }(s) = {W_{\varepsilon ,s{t_0}}}$ for ${s \in [0,1]}$. Denote
\[
{{\tilde c}_\varepsilon }: = \mathop {\max }\limits_{s \in [0,1]} {J_\varepsilon }({\gamma _\varepsilon }(s)),
\]
we see that ${c_\varepsilon } \le {{\tilde c}_\varepsilon }$ and $\mathop {\lim }\limits_{\varepsilon  \to 0} {c_\varepsilon } = \mathop {\lim }\limits_{\varepsilon  \to 0} {{\tilde c}_\varepsilon } = {c_{{V_0}}}$.

In order to state the next lemma, we need some notations. For each $R > 0$, we regard $H_0^1({B_R}(0))$ as a subspace of ${H_\varepsilon }$. Namely, for any $u \in H_0^1({B_R}(0))$, we extend $u$ by defining $u(x) = 0$ for $|x| > R$, then ${\left\|  \cdot  \right\|_{{H_\varepsilon }}}$ is equivalent to the standard norm of $H_0^1({B_R}(0))$ for each $R > 0$, $\varepsilon  > 0$. Using ${\left\|  \cdot  \right\|_{{H_\varepsilon }}}$, for each $T \in {(H_0^1({B_R}(0)))^{ - 1}}$, we define
\[
{\left\| T \right\|_{ * ,\varepsilon ,R}}: = \sup \bigl\{ {Tu:u \in H_0^1({B_R}(0)),{{\left\| u \right\|}_{{H_\varepsilon }}} \le 1} \bigr\}.
\]
Note also that ${\left\|  \cdot  \right\|_{ * ,\varepsilon ,R}}$ is equivalent to the standard norm of ${(H_0^1({B_R}(0)))^{ - 1}}$.

We use the notation
\[
J_\varepsilon ^\alpha : = \{ u \in {H_\varepsilon }:{J_\varepsilon }(u) \le \alpha \}
\]
and fix a ${R_0} > 0$ such that ${B_{{R_0}}}(0) \supset \Lambda $.

Inspired by \cite{zcz}, we have the following lemma and this lemma is a key for the proof of Theorem~\ref{1.1.}:
\begin{lemma}\label{4.3.}
(i) There exists a ${d_0} > 0$ such that for any $\{ {\varepsilon _i}\} _{i = 1}^\infty $, $\{ {R_{{\varepsilon _i}}}\} $, $\{ {u_{{\varepsilon _i}}}\} $ with
\begin{equation}\label{4.4}
\left\{ \begin{gathered}
  \mathop {\lim }\limits_{i \to \infty } {\varepsilon _i} = 0,{\text{ }}{R_{{\varepsilon _i}}} \ge {R_0}/{\varepsilon _i},{\text{ }}{u_{{\varepsilon _i}}} \in X_{{\varepsilon _i}}^{{d_0}} \cap H_0^1({B_{{R_{{\varepsilon _i}}}}}(0)), \hfill \\
  \mathop {\lim }\limits_{i \to \infty } {J_{{\varepsilon _i}}}({u_{{\varepsilon _i}}}) \le {c_{{V_0}}}{\text{ and }}\mathop {\lim }\limits_{i \to \infty } {\left\| {{{J'}_{{\varepsilon _i}}}({u_{{\varepsilon _i}}})} \right\|_{ * ,{\varepsilon _i},{R_{{\varepsilon _i}}}}} = 0, \hfill \\
\end{gathered}  \right.
\end{equation}
then there exists, up to a subsequence, $\{ {y_i}\} _{i = 1}^\infty  \subset {\mathbb{R}^3}$, ${x_0} \in \mathcal{M}$, $U \in {S_{{V_0}}}$ such that
\[
\mathop {\lim }\limits_{i \to \infty } |{\varepsilon _i}{y_i} - {x_0}| = 0{\text{ and }}\mathop {\lim }\limits_{i \to \infty } {\left\| {{u_{{\varepsilon _i}}} - \varphi ({\varepsilon _i}x - {\varepsilon _i}{y_i})U(x - {y_i})} \right\|_{{H_{{\varepsilon _i}}}}} = 0.
\]
(ii) If we drop $\{ {R_{{\varepsilon _i}}}\} $ and replace \eqref{4.4} by
\begin{equation}\label{4.5}
\mathop {\lim }\limits_{i \to \infty } {\varepsilon _i} = 0,{\text{ }}{u_{{\varepsilon _i}}} \in X_{{\varepsilon _i}}^{{d_0}},{\text{ }}\mathop {\lim }\limits_{i \to \infty } {J_{{\varepsilon _i}}}({u_{{\varepsilon _i}}}) \le {c_{{V_0}}}{\text{ and }}\mathop {\lim }\limits_{i \to \infty } {\left\| {{{J'}_{{\varepsilon _i}}}({u_{{\varepsilon _i}}})} \right\|_{{{({H_{{\varepsilon _i}}})}^{ - 1}}}} = 0,
\end{equation}
then the same conclusion holds.
\end{lemma}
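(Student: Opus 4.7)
The strategy is a concentration-compactness argument: first use the geometric structure of $X_{\varepsilon_i}$ to pin down approximate concentration centres, then extract a weak limit which I identify as a ground state of a limiting problem, and finally upgrade weak convergence to strong convergence by ruling out mass loss with the aid of the strict bound $c_{V_0}<\tfrac{1}{3}S^{3/2}$ from Lemma~\ref{3.4.}.

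\textbf{Extracting centres and a weak limit.} Since $u_{\varepsilon_i}\in X_{\varepsilon_i}^{d_0}$, I pick $x'_i\in\overline{\mathcal{M}^\beta}$ and $w_i\in S_{V_0}$ with $\|u_{\varepsilon_i}-\varphi(\varepsilon_i\cdot-x'_i)\,w_i(\cdot-x'_i/\varepsilon_i)\|_{H_{\varepsilon_i}}\le 2d_0$. By compactness of $\overline{\mathcal{M}^\beta}$ and of $S_{V_0}$ (Proposition~\ref{3.7.}), up to a subsequence $x'_i\to x_0\in\overline{\mathcal{M}^\beta}$ and $w_i\to w_*$ in $H^1(\mathbb{R}^3)$. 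Set $y_i:=x'_i/\varepsilon_i$ and $v_i(x):=u_{\varepsilon_i}(x+y_i)$. A direct energy bound using ${J_{\varepsilon_i}(u_{\varepsilon_i})\le c_{V_0}+o(1)}$ and the decomposition ${J_{\varepsilon_i}=I_{\varepsilon_i}+Q_{\varepsilon_i}}$ gives $\{v_i\}$ bounded in $H^1(\mathbb{R}^3)$, so $v_i\rightharpoonup U$ weakly, and the triangle inequality yields $\|U-w_*\|_{H^1}\le 2d_0$; choosing $d_0$ small ensures $U\not\equiv 0$.

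\textbf{Identifying the limit equation.} For $\psi\in C_c^\infty(\mathbb{R}^3)$ with $\operatorname{supp}\psi\subset B_R(0)$, the shifted test function $\psi(\cdot-y_i)$ lies in $H_0^1(B_{R_{\varepsilon_i}}(0))$ for $i$ large (since $R_{\varepsilon_i}\ge R_0/\varepsilon_i$ and $\varepsilon_i y_i\to x_0$ is bounded). Because $\varepsilon_i y_i\to x_0\in\overline{\mathcal{M}^\beta}\subset\Lambda$, the cut-off $\chi_{\varepsilon_i}$ vanishes on $\operatorname{supp}\psi(\cdot-y_i)$ for large $i$, so $Q'_{\varepsilon_i}(u_{\varepsilon_i})[\psi(\cdot-y_i)]=0$. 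Testing $J'_{\varepsilon_i}(u_{\varepsilon_i})$ against $\psi(\cdot-y_i)$ and passing to the limit via $V(\varepsilon_i(x+y_i))\to V(x_0)$ (continuity of $V$), Lemma~\ref{2.2.}(ii) for the Poisson term, and standard weak convergence for the subcritical and critical nonlinear terms on bounded sets, one obtains
\[
-\Delta U+V(x_0)U+\phi_U U=\lambda(U^+)^{p-1}+(U^+)^5\quad\text{in }\mathbb{R}^3.
\]
In particular, $U\in M_{V(x_0)}$, so $I_{V(x_0)}(U)\ge c_{V(x_0)}\ge c_{V_0}$, the last inequality because the mountain-pass level is non-decreasing in the constant potential (readable from the variational characterisation \eqref{3.13}).

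\textbf{Energy accounting and strong convergence.} Write $z_i:=v_i-U$ and use Brezis-Lieb (Lemma~\ref{2.2.}(i)) on the Poisson term together with the standard Brezis-Lieb identity for the $L^p$ and $L^6$ pieces to split
\[
I_{\varepsilon_i}(u_{\varepsilon_i})=I_{V(x_0)}(U)+\tilde I_i(z_i)+o(1)+\tfrac{1}{2}\!\int(V(\varepsilon_i x+x'_i)-V(x_0))v_i^2,
\]
where $\tilde I_i$ is the natural quadratic-plus-critical functional for $z_i$. Fatou on $V$ (using $V\ge V_0$ and Lemma~\ref{4.2.} in spirit) shows the last integral is $\ge o(1)$, and $Q_{\varepsilon_i}(u_{\varepsilon_i})\ge 0$, so
\[
c_{V_0}+o(1)\ge I_{V(x_0)}(U)+\tilde I_i(z_i)\ge c_{V_0}+\tilde I_i(z_i)+o(1).
\]
Hence $\tilde I_i(z_i)\to 0$. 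A Pohozaev-type identity for $z_i$ obtained from the test $J'_{\varepsilon_i}(u_{\varepsilon_i})[z_i(\cdot-y_i)]=o(1)$ allows me to conclude $\|\nabla z_i\|_2^2\to\ell$ and $\|z_i^+\|_6^6\to\ell$ for some $\ell\ge 0$. If $\ell>0$ then by the Sobolev inequality $\ell\ge S^{3/2}$, and combining with $I_{V(x_0)}(U)\ge c_{V_0}$ gives $c_{V_0}\ge c_{V_0}+\tfrac{1}{3}S^{3/2}$, contradicting Lemma~\ref{3.4.}. Therefore $\ell=0$, forcing $z_i\to 0$ in $H^1(\mathbb{R}^3)$, so $I_{V(x_0)}(U)=c_{V(x_0)}=c_{V_0}$, which forces $V(x_0)=V_0$ and $x_0\in\mathcal{M}$, and $U\in S_{V_0}$ (after the usual translation to place the maximum at $0$, which just modifies $y_i$ by a bounded amount). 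The convergence $\|u_{\varepsilon_i}-\varphi(\varepsilon_i\cdot-\varepsilon_i y_i)U(\cdot-y_i)\|_{H_{\varepsilon_i}}\to 0$ then follows from $v_i\to U$ in $H^1$ together with the fact that $\varphi(\varepsilon_i\cdot)\to 1$ locally and $\int V(\varepsilon_i x+x'_i)v_i^2\to V_0\|U\|_2^2$.

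\textbf{Main obstacle and part (ii).} The delicate point is ruling out critical Sobolev concentration of the residual $z_i$: this is precisely where the strict gap $c_{V_0}<\tfrac{1}{3}S^{3/2}$ from Lemma~\ref{3.4.} (which required $\lambda\ge\lambda^*$) is indispensable, and where Brezis-Lieb for the nonlocal Poisson term (Lemma~\ref{2.2.}) is needed to keep the energy bookkeeping clean despite the lack of the Ambrosetti-Rabinowitz condition. For part (ii), the same argument goes through since the localization step in identifying the limit equation is now trivial: test functions can live anywhere, and the stronger hypothesis $\|J'_{\varepsilon_i}(u_{\varepsilon_i})\|_{H_{\varepsilon_i}^{-1}}\to 0$ only simplifies the passage to the limit.
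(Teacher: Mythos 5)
Your proposal takes a genuinely different route from the paper: you attempt a direct weak-limit plus Brezis--Lieb energy decomposition, whereas the paper first proves an annulus-vanishing statement (Step~1), then performs a cut-off splitting $u_\varepsilon=u_{\varepsilon,1}+u_{\varepsilon,2}$ (Step~2), and only then analyses the localized piece $\tilde w_\varepsilon$ by ruling out $L^6$-concentration at every possible center and scale using the Benci--Cerami blow-up lemma (Lemma~\ref{2.6.}, Step~3). Your route would be cleaner if it worked, but it has two genuine gaps.

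\textbf{First gap: the potential correction.} You write the decomposition with the constant $V(x_0)$ and claim
$\int \bigl(V(\varepsilon_i x+x'_i)-V(x_0)\bigr)v_i^2\ge o(1)$ ``by Fatou, using $V\ge V_0$.'' This would need $V\ge V(x_0)$ pointwise, which is false: $x_0\in\overline{\mathcal{M}^\beta}$ is not a minimizer of $V$ (it may even satisfy $V(x_0)>V_0$), so the integrand is sign-indefinite. Since $v_i$ carries $O(d_0)$ mass outside any fixed ball, the correction is only $\ge -Cd_0$, which destroys the chain $c_{V_0}+o(1)\ge I_{V(x_0)}(U)+\tilde I_i(z_i)$. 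The paper circumvents this entirely: after the cut-off, $\tilde w_\varepsilon$ is supported in $B_{2\beta/\varepsilon}(0)$, the potential argument is then local, and the needed lower bound $\liminf \int V(\varepsilon\cdot+x_\varepsilon)\tilde w_\varepsilon^2\ge \int V(x_0)\tilde w^2$ is obtained from the established strong convergence $\tilde w_\varepsilon\to\tilde w$ in $L^s$, $s\in(2,6]$ (equation~\eqref{4.38}), not from a pointwise inequality on $V$.

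\textbf{Second gap: vanishing of the residual's lower-order terms.} Your ``Pohozaev-type identity for $z_i$'' rests on the unproved assertion that $\lambda\|z_i^+\|_p^p$, $N(z_i)$, and $\int V(\varepsilon_i\cdot)z_i^2$ all tend to zero. Weak convergence $z_i\rightharpoonup 0$ does not give this: $z_i$ could carry a subcritical cluster escaping to infinity inside $\Lambda/\varepsilon_i$, which is not ruled out by the Sobolev gap (such a cluster need not carry critical energy $S^{3/2}$). These vanishings are precisely the content of the paper's Step~1 (sup over balls in the annulus tends to zero, proved by two applications of Lemma~\ref{2.6.} combined with the $d_0$-smallness estimate~\eqref{4.10}) and Step~3, Cases~1 and~2 (ruling out $L^6$-concentration for both bounded and unbounded centers, again via Lemma~\ref{2.6.}). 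Only after these is interpolation used to deduce the required $L^s$ strong convergence. Your bound $\|z_i\|_{H_\varepsilon}\le 5d_0$ does give smallness of these terms, but this is $O(d_0^2)$, not $o(1)$, so it does not close the argument for fixed $d_0$. The essential missing ingredient is the multi-scale compactness analysis: a direct test against $z_i$ cannot replace it.

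\end{document}
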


\begin{proof}
We only prove (i). The proof of (ii) is similar. For notational brevity, we write $\varepsilon $ for ${\varepsilon _i}$, and still use $\varepsilon $ after taking a subsequence. By the definition of $X_\varepsilon ^{{d_0}}$, there exist $\{ {U_\varepsilon }\}  \subset {S_{{V_0}}}$ and $\{x_\varepsilon \} \subset {\mathcal{M}^\beta }$ such that
\[
{\left\| {{u_\varepsilon } - \varphi (\varepsilon x - {x_\varepsilon }){U_\varepsilon }\Bigl( {x - \frac{{{x_\varepsilon }}}
{\varepsilon }} \Bigr)} \right\|_{{H_\varepsilon }}} \le \frac{3}
{2}{d_0}.
\]
Since ${S_{{V_0}}}$ and ${\mathcal{M}^\beta }$ are compact, there exist ${U_0} \in {S_{{V_0}}}$, ${x_0} \in {\mathcal{M}^\beta }$ such that ${U_\varepsilon } \to {U_0}$ in ${H^1}({\mathbb{R}^3})$ and ${x_\varepsilon } \to {x_0}$ as $\varepsilon  \to 0$. Thus, for $\varepsilon  > 0$ small,
\begin{equation}\label{4.6}
{\left\| {{u_\varepsilon } - \varphi (\varepsilon x - {x_\varepsilon }){U_0}\Bigl( {x - \frac{{{x_\varepsilon }}}
{\varepsilon }} \Bigr)} \right\|_{{H_\varepsilon }}} \le 2{d_0}.
\end{equation}
\textbf{Step 1}: We claim that
\begin{equation}\label{4.7}
\mathop {\lim }\limits_{\varepsilon  \to 0} \mathop {\sup }\limits_{y \in {A_\varepsilon }} \int_{{B_1}(y)} {|{u_\varepsilon }{|^6}}  = 0,
\end{equation}
where ${A_\varepsilon } = {B_{3\beta /\varepsilon }}({x_\varepsilon }/\varepsilon )\backslash {B_{\beta /2\varepsilon }}({x_\varepsilon }/\varepsilon )$.

If the claim is true, by Lemma~\ref{2.5.}, we see that
\begin{equation}\label{4.8}
\mathop {\lim }\limits_{\varepsilon  \to 0} \int_{{B_\varepsilon }} {|{u_\varepsilon }{|^6}}  = 0,
\end{equation}
where ${B_\varepsilon } = {B_{2\beta /\varepsilon }}({x_\varepsilon }/\varepsilon )\backslash {B_{\beta /\varepsilon }}({x_\varepsilon }/\varepsilon )$.

Indeed, since
\[
\mathop {\sup }\limits_{y \in {A_\varepsilon }} \int_{{B_1}(y)} {|{u_\varepsilon }{|^6}}  \ge \mathop {\sup }\limits_{y \in {\mathbb{R}^3}} \int_{{B_1}(y)} {|{u_\varepsilon } \cdot {\chi _{A_\varepsilon ^1}}{|^6}} ,
\]
where $A_\varepsilon ^1 = {B_{(3\beta /\varepsilon ) - 1}}({x_\varepsilon }/\varepsilon )\backslash {B_{(\beta /2\varepsilon ) + 1}}({x_\varepsilon }/\varepsilon )$, then
\[
\mathop {\lim }\limits_{\varepsilon  \to 0} \mathop {\sup }\limits_{y \in {\mathbb{R}^3}} \int_{{B_1}(y)} {|{u_\varepsilon } \cdot {\chi _{A_\varepsilon ^1}}{|^6}}  = 0.
\]
By Lemma~\ref{2.5.}, we have
\[
\int_{{\mathbb{R}^3}} {|{u_\varepsilon } \cdot {\chi _{A_\varepsilon ^1}}{|^6}}  \to 0{\text{ as }}\varepsilon  \to 0.
\]
Since $A_\varepsilon ^1 \supset {B_\varepsilon }$ for $\varepsilon  > 0$ small, \eqref{4.8} holds.

Next, we will prove \eqref{4.7}. Assuming the contrary, there exists $r>0$ such that
\[
\mathop {\underline {\lim } }\limits_{\varepsilon  \to 0} \mathop {\sup }\limits_{y \in {A_\varepsilon }} \int_{{B_1}(y)} {|{u_\varepsilon }{|^6}}  = 2r > 0,
\]
then there exists ${y_\varepsilon } \in {A_\varepsilon }$ such that for $\varepsilon  > 0$ small, $\int_{{B_1}({y_\varepsilon })} {|{u_\varepsilon }{|^6}}  \ge r > 0$. Note also that ${y_\varepsilon } \in {A_\varepsilon }$, there exists ${x^ * } \in {\mathcal{M}^{4\beta }} \subset \Lambda $ such that $\varepsilon {y_\varepsilon } \to {x^ * }$ as $\varepsilon  \to 0$. Set ${v_\varepsilon }(x): = {u_\varepsilon }(x + {y_\varepsilon })$, then, for $\varepsilon  > 0$ small,
\begin{equation}\label{4.9}
\int_{{B_1}(0)} {|{v_\varepsilon }{|^6}}  \ge r > 0,
\end{equation}
up to a subsequence, ${v_\varepsilon } \rightharpoonup v$ in ${H^1}({\mathbb{R}^3})$ and $v$ satisfies
\[
 - \Delta v + V({x^ * })v + {\phi _v}v = \lambda {v^{p - 1}} + {v^5}{\text{ in }}{\mathbb{R}^3},{\text{ }}v \ge 0.
\]
Case 1: If $v \ne 0$, then
\[
{c_{V({x^*})}} \le {I_{V({x^*})}}(v) - \frac{1}
{3}{G_{V({x^*})}}(v) = \frac{1}
{3}V({x^*})\int_{{\mathbb{R}^3}} {{v^2}}  + \frac{{2p - 6}}
{{3p}}\lambda\int_{{\mathbb{R}^3}} {{v^p}}  + \frac{1}
{3}\int_{{\mathbb{R}^3}} {{v^6}} ,
\]
we have
\[\begin{gathered}
  {\text{  }}{\left\| V \right\|_{{L^\infty }(\bar \Lambda )}}\int_{{\mathbb{R}^3}} {{v^2}}  + \frac{{2p - 6}}
{p}\lambda\int_{{\mathbb{R}^3}} {{v^p}}  + \int_{{\mathbb{R}^3}} {{v^6}}  \hfill \\
   \ge V({x^*})\int_{{\mathbb{R}^3}} {{v^2} + \frac{{2p - 6}}
{p}\lambda\int_{{\mathbb{R}^3}} {{v^p}} }  + \int_{{\mathbb{R}^3}} {{v^6}}  \ge 3{c_{V({x^*})}} \ge 3{c_{{V_0}}}. \hfill \\
\end{gathered} \]
Hence, for sufficiently large $R$,
\[\begin{gathered}
  {\text{   }}\mathop {\underline {\lim } }\limits_{\varepsilon  \to 0} \Bigl[ {{{\left\| V \right\|}_{{L^\infty }(\bar \Lambda )}}\int_{{B_R}({y_\varepsilon })} {u_\varepsilon ^2}  + \frac{{2p - 6}}
{p}\lambda\int_{{B_R}({y_\varepsilon })} {u_\varepsilon ^p}  + \int_{{B_R}({y_\varepsilon })} {u_\varepsilon ^6} } \Bigr] \hfill \\
   = \mathop {\underline {\lim } }\limits_{\varepsilon  \to 0} \Bigl[ {{{\left\| V \right\|}_{{L^\infty }(\bar \Lambda )}}\int_{{B_R}(0)} {v_\varepsilon ^2}  + \frac{{2p - 6}}
{p}\lambda\int_{{B_R}(0)} {v_\varepsilon ^p}  + \int_{{B_R}(0)} {v_\varepsilon ^6} } \Bigr] \hfill \\
   \ge \Bigl[ {{{\left\| V \right\|}_{{L^\infty }(\bar \Lambda )}}\int_{{B_R}(0)} {{v^2}}  + \frac{{2p - 6}}
{p}\lambda\int_{{B_R}(0)} {{v^p}}  + \int_{{B_R}(0)} {{v^6}} } \Bigr] \hfill \\
   \ge \frac{1}
{2}\Bigl[ {{{\left\| V \right\|}_{{L^\infty }(\bar \Lambda )}}\int_{{\mathbb{R}^3}} {{v^2}}  + \frac{{2p - 6}}
{p}\lambda\int_{{\mathbb{R}^3}} {{v^p}}  + \int_{{\mathbb{R}^3}} {{v^6}} } \Bigr] \ge \frac{3}
{2}{c_{{V_0}}} > 0. \hfill \\
\end{gathered} \]
On the other hand, by the Sobolev's Imbedding Theorem and \eqref{4.6},
\begin{equation}\label{4.10}
\begin{gathered}
  {\text{  }}{\left\| V \right\|_{{L^\infty }(\bar \Lambda )}}\int_{{B_R}({y_\varepsilon })} {u_\varepsilon ^2}  + \frac{{2p - 6}}
{p}\lambda\int_{{B_R}({y_\varepsilon })} {u_\varepsilon ^p}  + \int_{{B_R}({y_\varepsilon })} {u_\varepsilon ^6}  \hfill \\
   \le C{d_0} + C\int_{{B_R}({y_\varepsilon })} {{{\Bigl| {\varphi (\varepsilon x - {x_\varepsilon }){U_0}\bigl( {x - \frac{{{x_\varepsilon }}}
{\varepsilon }} \bigr)} \Bigr|}^2}}  + C\lambda\int_{{B_R}({y_\varepsilon })} {{{\Bigl| {\varphi (\varepsilon x - {x_\varepsilon }){U_0}\bigl( {x - \frac{{{x_\varepsilon }}}
{\varepsilon }} \bigr)} \Bigr|}^p}}  \hfill \\
  {\text{   }}+C\int_{{B_R}({y_\varepsilon })} {{{\Bigl| {\varphi (\varepsilon x - {x_\varepsilon }){U_0}\bigl( {x - \frac{{{x_\varepsilon }}}
{\varepsilon }} \bigr)} \Bigr|}^6}}  \hfill \\
   \le C{d_0} + C\int_{{B_R}({y_\varepsilon } - \frac{{{x_\varepsilon }}}
{\varepsilon })} {|{U_0}(x){|^2}}  + C\lambda\int_{{B_R}({y_\varepsilon } - \frac{{{x_\varepsilon }}}
{\varepsilon })} {|{U_0}(x){|^p}}  + C\int_{{B_R}({y_\varepsilon } - \frac{{{x_\varepsilon }}}
{\varepsilon })} {|{U_0}(x){|^6}}  \hfill \\
   = C{d_0} + o(1), \hfill \\
\end{gathered}
\end{equation}
where $o(1) \to 0$ as $\varepsilon  \to 0$, and we have used the fact that $| {{y_\varepsilon } - \frac{{{x_\varepsilon }}}
{\varepsilon }} | \ge \beta /2\varepsilon $. This leads to a contradiction if $d_0$ is small enough.

\noindent Case 2: If $v = 0$, i.e. ${v_\varepsilon } \rightharpoonup 0$ in ${H^1}({\mathbb{R}^3})$, then ${v_\varepsilon } \to 0$ in $L_{{\text{loc}}}^s({\mathbb{R}^3})$ for $s \in [1,6)$. Thus, by \eqref{4.9} and the Sobolev's Imbedding $H_{{\text{loc}}}^1({\mathbb{R}^3}) \hookrightarrow L_{{\text{loc}}}^s({\mathbb{R}^3})$, $\exists C > 0$ (independent of $\varepsilon $) such that, for $\varepsilon  > 0$ small,
\begin{equation}\label{4.11}
\int_{{B_1}(0)} {|\nabla {v_\varepsilon }{|^2}}  \ge C{r^{1/3}} > 0.
\end{equation}
Now we claim that:
\begin{equation}\label{4.12}
\mathop {\lim }\limits_{\varepsilon  \to 0} \mathop {\sup }\limits_{\varphi  \in C_c^\infty ({B_2}(0)),{{\left\| \varphi  \right\|}_{{H^1}({\mathbb{R}^3})}} = 1} | {\langle {{\rho _\varepsilon },\varphi } \rangle } | = 0,
\end{equation}
where ${\rho _\varepsilon } = \Delta {v_\varepsilon } + {(v_\varepsilon ^ + )^5} \in {({H^1}({\mathbb{R}^3}))^{ - 1}}$. It is easy to check that for $\varepsilon  > 0$ small, $\int_{{\mathbb{R}^3}} {{\chi _\varepsilon }(x){u_\varepsilon }(x)\varphi (x - {y_\varepsilon })}  \equiv 0$ uniformly for any ${\varphi  \in C_c^\infty ({B_2}(0))}$. Thus for any ${\varphi  \in C_c^\infty ({B_2}(0))}$ with ${{{\left\| \varphi  \right\|}_{{H^1}({\mathbb{R}^3})}} = 1}$,
\[
\begin{gathered}
  \left\langle {{\rho _\varepsilon },\varphi } \right\rangle  =  - \left\langle {J'({u_\varepsilon }),\varphi (x - {y_\varepsilon })} \right\rangle  + \int_{{\mathbb{R}^3}} {V(\varepsilon x){u_\varepsilon }(x)\varphi (x - {y_\varepsilon })}  \hfill \\
  {\text{             }} + \int_{{\mathbb{R}^3}} {{\phi _{{u_\varepsilon }}}(x){u_\varepsilon }(x)\varphi (x - {y_\varepsilon })}  - \lambda \int_{{\mathbb{R}^3}} {{{(u_\varepsilon ^ + )}^{p - 1}}(x)\varphi (x - {y_\varepsilon })}  \hfill \\
  {\text{           }} = {J_1} + {J_2} + {J_3} + {J_4}. \hfill \\
\end{gathered}
\]
In view of the facts that ${\left\| {{{J'}_\varepsilon }({u_\varepsilon })} \right\|_{ * ,\varepsilon ,{R_\varepsilon }}} \to 0$, ${\text{supp}}\varphi  \subset {B_2}(0)$, $\mathop {\sup }\limits_{x \in {B_2}(0)} V(\varepsilon x + \varepsilon {y_\varepsilon }) \le C$ uniformly for all $\varepsilon  > 0$ small, ${v_\varepsilon } \to 0$ in $L_{{\text{loc}}}^s({\mathbb{R}^3})$ for $s \in [1,6)$ and Lemma~\ref{2.1.}, we have
\[\begin{gathered}
  |{J_1}| \le {\left\| {{{J'}_\varepsilon }({u_\varepsilon })} \right\|_{ * ,\varepsilon ,{R_\varepsilon }}}{\left\| {\varphi (x - {y_\varepsilon })} \right\|_{{H_\varepsilon }}} = o(1){\left\| {\varphi (x - {y_\varepsilon })} \right\|_{{H_\varepsilon }}} \hfill \\
   \le o(1){\left\| {\varphi (x - {y_\varepsilon })} \right\|_{{H^1}({\mathbb{R}^3})}} \to 0, \hfill \\
\end{gathered} \]
\[
|{J_2}| \le \mathop {\sup }\limits_{x \in {B_2}(0)} V(\varepsilon x + \varepsilon {y_\varepsilon }){\Bigl( {\int_{{B_2}(0)} {|{v_\varepsilon }{|^2}} } \Bigr)^{1/2}}{\Bigl( {\int_{{B_2}(0)} {{\varphi ^2}} } \Bigr)^{1/2}} \to 0,
\]
\[\begin{gathered}
  |{J_3}| = \Bigl| {\int_{{\mathbb{R}^3}} {{\phi _{{v_\varepsilon }}}{v_\varepsilon }\varphi } } \Bigr| \le {\Bigl( {\int_{{\mathbb{R}^3}} {|{\phi _{{v_\varepsilon }}}{|^6}} } \Bigr)^{1/6}}{\Bigl( {\int_{{B_2}(0)} {|{v_\varepsilon }{|^3}} } \Bigr)^{1/3}}{\Bigl( {\int_{{B_2}(0)} {{\varphi ^2}} } \Bigr)^{1/2}} \hfill \\
   \le C\left\| {{v_\varepsilon }} \right\|_{{L^{12/5}}({\mathbb{R}^3})}^2{\Bigl( {\int_{{B_2}(0)} {|{v_\varepsilon }{|^3}} } \Bigr)^{1/3}}{\Bigl( {\int_{{B_2}(0)} {{\varphi ^2}} } \Bigr)^{1/2}} \to 0 \hfill \\
\end{gathered} \]
and
\[
|{J_4}| = \lambda \Bigl| {\int_{{\mathbb{R}^3}} {{{(v_\varepsilon ^ + )}^{p - 1}}\varphi } } \Bigr| \le \lambda {\Bigl( {\int_{{B_2}(0)} {|{v_\varepsilon }{|^p}} } \Bigr)^{(p - 1)/p}}{\Bigl( {\int_{{B_2}(0)} {|\varphi {|^p}} } \Bigr)^{1/p}} \to 0
\]
as $\varepsilon  \to 0$ uniformly for all ${\varphi  \in C_c^\infty ({B_2}(0))}$ with ${{{\left\| \varphi  \right\|}_{{H^1}({\mathbb{R}^3})}} = 1}$, i.e. \eqref{4.12} holds.

In view of Lemma~\ref{2.6.}, we see from \eqref{4.9}, \eqref{4.11} and \eqref{4.12} that, there exist ${{\tilde y}_\varepsilon } \in {\mathbb{R}^3}$ and ${\sigma _\varepsilon } > 0$ with ${{\tilde y}_\varepsilon } \to \tilde y \in \overline {{B_1}(0)} $, ${\sigma _\varepsilon } \to 0$ as $\varepsilon  \to 0$ such that
\[
{w_\varepsilon }(x): = \sigma _\varepsilon ^{1/2}{v_\varepsilon }({\sigma _\varepsilon }x + {{\tilde y}_\varepsilon }) \rightharpoonup w{\text{ in }}{D^{1,2}}({\mathbb{R}^3})
\]
and $w \ge 0$ is a nontrivial solution of
\begin{equation}\label{add1}
 - \Delta u = {u^5},{\text{ }}u \in {D^{1,2}}({\mathbb{R}^3}).
\end{equation}
It is well known that
\[
w(x) = \frac{{{3^{1/4}}{\delta ^{1/2}}}}
{{{{({\delta ^2} + |x - {x_0}{|^2})}^{1/2}}}}
\]
for some $\delta  > 0$, ${x_0} \in {\mathbb{R}^3}$ and
\begin{equation}\label{add2}
\int_{{\mathbb{R}^3}} {|\nabla w{|^2}}  = \int_{{\mathbb{R}^3}} {{w^6}}  = {S^{3/2}},
\end{equation}
then $\exists R > 0$ such that
\[
\int_{{B_R}(0)} {{w^6}}  \ge \frac{1}
{2}\int_{{\mathbb{R}^3}} {{w^6}}  = \frac{1}
{2}{S^{3/2}} > 0.
\]
On the other hand,
\begin{equation}\label{4.13}
\int_{{B_R}(0)} {{w^6}}  \le \mathop {\underline {\lim } }\limits_{\varepsilon  \to 0} \int_{{B_R}(0)} {w_\varepsilon ^6}  = \mathop {\underline {\lim } }\limits_{\varepsilon  \to 0} \int_{{B_{{\sigma _\varepsilon }R}}({{\tilde y}_\varepsilon })} {v_\varepsilon ^6}  = \mathop {\underline {\lim } }\limits_{\varepsilon  \to 0} \int_{{B_{{\sigma _\varepsilon }R}}({{\tilde y}_\varepsilon } + {y_\varepsilon })} {u_\varepsilon ^6}  \le \mathop {\underline {\lim } }\limits_{\varepsilon  \to 0} \int_{{B_2}({y_\varepsilon })} {u_\varepsilon ^6} ,
\end{equation}
where we have used the facts that ${\sigma _\varepsilon } \to 0$ and ${{\tilde y}_\varepsilon } \to \tilde y \in \overline {{B_1}(0)} $ as $\varepsilon  \to 0$.

Similar to \eqref{4.10}, we can check that \eqref{4.13} leads to a contradiction for ${d_0} > 0$ small. Hence \eqref{4.7} holds.

For any $s \in (2,6)$, using the Interpolation Inequality for ${L^p}$ norms and \eqref{4.8}, we have
\begin{equation}\label{4.14}
\int_{{B_\varepsilon }} {|{u_\varepsilon }{|^s}}  \le {\Bigl( {\int_{{B_\varepsilon }} {|{u_\varepsilon }{|^2}} } \Bigr)^{\frac{3}
{2} - \frac{s}
{4}}}{\Bigl( {\int_{{B_\varepsilon }} {|{u_\varepsilon }{|^6}} } \Bigr)^{\frac{s}
{4} - \frac{1}
{2}}} \le C{\Bigl( {\int_{{B_\varepsilon }} {|{u_\varepsilon }{|^6}} } \Bigr)^{\frac{s}
{4} - \frac{1}
{2}}} \to 0{\text{ as }}\varepsilon  \to 0.
\end{equation}
It follows that
\begin{equation}\label{4.15}
\mathop {\lim }\limits_{\varepsilon  \to 0} \int_{{B_\varepsilon }} {|{u_\varepsilon }{|^s}}  = 0{\text{ for all }}s \in (2,6].
\end{equation}
\textbf{Step 2}: Let $u_{\varepsilon ,1}(x) = \varphi (\varepsilon x - {x_\varepsilon }){u_\varepsilon }(x)$, $u_{\varepsilon ,2}(x) = (1 - \varphi (\varepsilon x - {x_\varepsilon })){u_\varepsilon }(x)$. By \eqref{4.15} and direct computations, we can check that
\[
\int_{{\mathbb{R}^3}} {{{(u_\varepsilon ^ + )}^s}}  = \int_{{\mathbb{R}^3}} {{{({{(u_{\varepsilon ,1})}^ + })}^s}}  + \int_{{\mathbb{R}^3}} {{{({{(u_{\varepsilon ,2})}^ + })}^s}}  + o(1),{\text{ }}s \in (2,6],
\]
\[
\int_{{\mathbb{R}^3}} {|\nabla {u_\varepsilon }{|^2}}  \ge \int_{{\mathbb{R}^3}} {|\nabla u_{\varepsilon ,1}{|^2}}  + \int_{{\mathbb{R}^3}} {|\nabla u_{\varepsilon ,2}{|^2}}  + o(1),
\]
\[
\int_{{\mathbb{R}^3}} {V(\varepsilon x)|{u_\varepsilon }{|^2}}  \ge \int_{{\mathbb{R}^3}} {V(\varepsilon x)|u_{\varepsilon ,1}{|^2}}  + \int_{{\mathbb{R}^3}} {V(\varepsilon x)|u_{\varepsilon ,2}{|^2}} ,
\]
\[
\int_{{\mathbb{R}^3}} {{\phi _{{u_\varepsilon }}}{{({u_\varepsilon })}^2}}  \ge \int_{{\mathbb{R}^3}} {{\phi _{{u_{\varepsilon ,1}}}}{{({u_{\varepsilon ,1}})}^2}}  + \int_{{\mathbb{R}^3}} {{\phi _{{u_{\varepsilon ,2}}}}{{({u_{\varepsilon ,2}})}^2}},
\]
\[
{Q_\varepsilon }(u_{\varepsilon ,1}) = 0,{\text{ }}{Q_\varepsilon }(u_{\varepsilon ,2}) = {Q_\varepsilon }({u_\varepsilon }) \ge 0.
\]
Hence we get,
\begin{equation}\label{4.16}
{J_\varepsilon }({u_\varepsilon }) \ge {I_\varepsilon }(u_{\varepsilon ,1}) + {I_\varepsilon }(u_{\varepsilon ,2}) + o(1).
\end{equation}

Next, we claim that ${\left\| {u_{\varepsilon ,2}} \right\|_{{H_\varepsilon }}} \to 0$ as $\varepsilon  \to 0$.

By \eqref{4.6}, we have
\begin{equation}\label{4.17}
\begin{gathered}
  {\left\| {u_{\varepsilon ,2}} \right\|_{{H_\varepsilon }}} \le {\left\| {u_{\varepsilon ,1} - \varphi (\varepsilon x - {x_\varepsilon }){U_0}\bigl( {x - \frac{{{x_\varepsilon }}}
{\varepsilon }} \bigr)} \right\|_{{H_\varepsilon }}} + 2{d_0} \hfill \\
  {\text{         }} = {\left\| {u_{\varepsilon ,1} - \varphi (\varepsilon x - {x_\varepsilon }){U_0}\bigl( {x - \frac{{{x_\varepsilon }}}
{\varepsilon }} \bigr)} \right\|_{{H_\varepsilon }({B_{2\beta /\varepsilon }}({x_\varepsilon }/\varepsilon ))}} + 2{d_0} \hfill \\
  {\text{         }} \le {\left\| {u_{\varepsilon ,2}} \right\|_{{H_\varepsilon }({B_{2\beta /\varepsilon }}({x_\varepsilon }/\varepsilon ))}} + 4{d_0} \hfill \\
  {\text{         }} = {\left\| {u_{\varepsilon ,2}} \right\|_{{H_\varepsilon }({B_{2\beta /\varepsilon }}({x_\varepsilon }/\varepsilon )\backslash {B_{\beta /\varepsilon }}({x_\varepsilon }/\varepsilon ))}} + 4{d_0} \hfill \\
  {\text{         }} \le C{\left\| {{u_\varepsilon }} \right\|_{{H_\varepsilon }({B_{2\beta /\varepsilon }}({x_\varepsilon }/\varepsilon )\backslash {B_{\beta /\varepsilon }}({x_\varepsilon }/\varepsilon ))}} + 4{d_0} \hfill \\
  {\text{         }} \le C{\left\| {\varphi (\varepsilon x - {x_\varepsilon }){U_0}\bigl( {x - \frac{{{x_\varepsilon }}}
{\varepsilon }} \bigr)} \right\|_{{H_\varepsilon }({B_{2\beta /\varepsilon }}({x_\varepsilon }/\varepsilon )\backslash {B_{\beta /\varepsilon }}({x_\varepsilon }/\varepsilon ))}} + C{d_0} \hfill \\
  {\text{         }} \le C{\left\| {{U_0}\bigl( {x - \frac{{{x_\varepsilon }}}
{\varepsilon }} \bigr)} \right\|_{{H^1}({B_{2\beta /\varepsilon }}({x_\varepsilon }/\varepsilon )\backslash {B_{\beta /\varepsilon }}({x_\varepsilon }/\varepsilon ))}} + C{d_0} \hfill \\
  {\text{         }} \le C{\left\| {{U_0}} \right\|_{{H^1}({B_{2\beta /\varepsilon }}(0)\backslash {B_{\beta /\varepsilon }}(0))}} + C{d_0} = C{d_0} + o(1), \hfill \\
\end{gathered}
\end{equation}
where $o(1) \to 0$ as $\varepsilon  \to 0$. Hence we have $\mathop {\overline {\lim } }\limits_{\varepsilon  \to 0} {\left\| {u_{\varepsilon ,2}} \right\|_{{H_\varepsilon }}} \le C{d_0}$.

By \eqref{4.15} and the facts that $\left\langle {{{J'}_\varepsilon }({u_\varepsilon }),{u_{\varepsilon ,2}}} \right\rangle  \to 0$ as $\varepsilon  \to 0$ and $\left\langle {{{Q'}_\varepsilon }({u_\varepsilon }),{u_{\varepsilon ,2}}} \right\rangle  = \left\langle {{{Q'}_\varepsilon }({u_{\varepsilon ,2}}),{u_{\varepsilon ,2}}} \right\rangle  \ge 0$, we get
\[\begin{gathered}
  {\text{ }}\int_{{\mathbb{R}^3}} {\nabla {u_\varepsilon } \cdot \nabla {u_{\varepsilon ,2}}}  + \int_{{\mathbb{R}^3}} {V(\varepsilon x){u_\varepsilon }{u_{\varepsilon ,2}}}  + \int_{{\mathbb{R}^3}} {{\phi _{{u_\varepsilon }}}{u_\varepsilon }{u_{\varepsilon ,2}}}  + \left\langle {{{Q'}_\varepsilon }({u_{\varepsilon ,2}}),{u_{\varepsilon ,2}}} \right\rangle  \hfill \\
   = \lambda \int_{{\mathbb{R}^3}} {{{(u_\varepsilon ^ + )}^{p - 1}}{u_{\varepsilon ,2}}}  + \int\limits_{{\mathbb{R}^3}} {{{(u_\varepsilon ^ + )}^5}{u_{\varepsilon ,2}}}  + o(1), \hfill \\
\end{gathered} \]
then
\[\begin{gathered}
  {\text{ }}\left\| {{u_{\varepsilon ,2}}} \right\|_{{H_\varepsilon }}^2 \le \lambda \int_{{\mathbb{R}^3}} {|{u_{\varepsilon ,2}}{|^p}}  + \int_{{\mathbb{R}^3}} {|{u_{\varepsilon ,2}}{|^6}}  + o(1) \hfill \\
   \le C\lambda \left\| {{u_{\varepsilon ,2}}} \right\|_{{H_\varepsilon }}^p + C\left\| {{u_{\varepsilon ,2}}} \right\|_{{H_\varepsilon }}^6 + o(1) \le \frac{1}
{2}\left\| {{u_{\varepsilon ,2}}} \right\|_{{H_\varepsilon }}^2 + C\left\| {{u_{\varepsilon ,2}}} \right\|_{{H_\varepsilon }}^6 + o(1), \hfill \\
\end{gathered} \]
i.e. $\left\| {{u_{\varepsilon ,2}}} \right\|_{{H_\varepsilon }}^2 \le C\left\| {{u_{\varepsilon ,2}}} \right\|_{{H_\varepsilon }}^6 + o(1)$.

Taking ${d_0} > 0$ small, we have ${\left\| {u_{\varepsilon ,2}} \right\|_{{H_\varepsilon }}} = o(1)$. From \eqref{4.16}, it holds that
\begin{equation}\label{4.18}
{J_\varepsilon }({u_\varepsilon }) \ge {I_\varepsilon }(u_{\varepsilon ,1}) + o(1).
\end{equation}
\textbf{Step 3}: Let ${\tilde w_\varepsilon }(x) = {u_{\varepsilon ,1}}\bigl( {x + \frac{{{x_\varepsilon }}}
{\varepsilon }} \bigr) = \varphi (\varepsilon x){u_\varepsilon }\bigl( {x + \frac{{{x_\varepsilon }}}
{\varepsilon }} \bigr)$, up to a subsequence, $\exists \tilde w \in {H^1}({\mathbb{R}^3})$ such that
\begin{equation}\label{4.19}
{{\tilde w}_\varepsilon } \rightharpoonup \tilde w{\text{ in }}{H^1}({\mathbb{R}^3})
\end{equation}
and
\begin{equation}\label{4.20}
{{\tilde w}_\varepsilon } \to \tilde w{\text{ a}}{\text{.e}}{\text{. in }}{\mathbb{R}^3}.
\end{equation}
We claim that
\begin{equation}\label{4.21}
{{\tilde w}_\varepsilon } \to \tilde w{\text{ in }}{L^6}({\mathbb{R}^3}).
\end{equation}
In view of Lemma~\ref{2.5.}, assuming the contrary that $\exists r > 0$ such that
\[
\mathop {\underline {\lim } }\limits_{\varepsilon  \to 0} \mathop {\sup }\limits_{z \in {\mathbb{R}^3}} \int_{{B_1}(z)} {|{{\tilde w}_\varepsilon } - \tilde w{|^6}}  = 2r > 0.
\]
Then, for $\varepsilon  > 0$ small, there exists ${z_\varepsilon } \in {\mathbb{R}^3}$ such that
\begin{equation}\label{4.22}
\int_{{B_1}({z_\varepsilon })} {|{{\tilde w}_\varepsilon } - \tilde w{|^6}}  \ge r > 0.
\end{equation}
Case 1: $\{ {z_\varepsilon }\} $ is bounded, i.e. $|{z_\varepsilon }| \le \alpha $ for some $\alpha  > 0$, then for $\varepsilon  > 0$ small,
\begin{equation}\label{4.23}
\int_{{B_{\alpha  + 1}}(0)} {|{{\tilde v}_\varepsilon }{|^6}}  \ge r > 0,
\end{equation}
where ${{\tilde v}_\varepsilon } = {{\tilde w}_\varepsilon } - \tilde w$ and ${{\tilde v}_\varepsilon } \rightharpoonup 0$ in ${H^1}({\mathbb{R}^3})$. Similar as in \textbf{Step 1}, $\exists C > 0$ (independent of $\varepsilon $), such that for $\varepsilon  > 0$ small,
\begin{equation}\label{4.24}
\int_{{B_{\alpha  + 1}}(0)} {|\nabla {{\tilde v}_\varepsilon }{|^2}}  \ge C{r^{1/3}} > 0.
\end{equation}
Now, we claim that
\begin{equation}\label{4.25}
\mathop {\lim }\limits_{\varepsilon  \to 0} \mathop {\sup }\limits_{\tilde \varphi  \in C_c^\infty ({B_{\alpha  + 2}}(0)),{{\left\| {\tilde \varphi } \right\|}_{{H^1}({\mathbb{R}^3})}} = 1} \left| {\left\langle {{{\tilde \rho }_\varepsilon },\tilde \varphi } \right\rangle } \right| = 0,
\end{equation}
where ${{\tilde \rho }_\varepsilon } = \Delta {{\tilde v}_\varepsilon } + {(\tilde v_\varepsilon ^ + )^5} \in {({H^1}({\mathbb{R}^3}))^{ - 1}}$. It is easy to check that for $\varepsilon  > 0$ small, $\int_{{\mathbb{R}^3}} {{\chi _\varepsilon }(x){u_\varepsilon }(x)\tilde \varphi \Bigl( {x - \frac{{{x_\varepsilon }}}
{\varepsilon }} \Bigr)}  \equiv 0$ uniformly for all ${\tilde \varphi  \in C_c^\infty ({B_{\alpha  + 2}}(0))}$. Hence, we have
\begin{equation}\label{4.26}
\begin{gathered}
  o(1) = \Bigl\langle {{{J'}_\varepsilon }({u_\varepsilon }),\tilde \varphi \bigl( {x - \frac{{{x_\varepsilon }}}
{\varepsilon }} \bigr)} \Bigr\rangle  \hfill \\
   = \int_{{\mathbb{R}^3}} {\nabla {u_\varepsilon }\bigl( {x + \frac{{{x_\varepsilon }}}
{\varepsilon }} \bigr) \cdot \nabla \tilde \varphi }  + \int_{{\mathbb{R}^3}} {V(\varepsilon x + {x_\varepsilon }){u_\varepsilon }\bigl( {x + \frac{{{x_\varepsilon }}}
{\varepsilon }} \bigr)\tilde \varphi }  + \int_{{\mathbb{R}^3}} {{\phi _{{u_\varepsilon }( {x + \frac{{{x_\varepsilon }}}
{\varepsilon }} )}}{u_\varepsilon }\bigl( {x + \frac{{{x_\varepsilon }}}
{\varepsilon }} \bigr)\tilde \varphi }  \hfill \\
   \text{}- \lambda \int_{{\mathbb{R}^3}} {{{\Bigl( {u_\varepsilon ^ + \bigl( {x + \frac{{{x_\varepsilon }}}
{\varepsilon }} \bigr)} \Bigr)}^{p - 1}}\tilde \varphi }  - \lambda \int_{{\mathbb{R}^3}} {{{\Bigl( {u_\varepsilon ^ + \bigl( {x + \frac{{{x_\varepsilon }}}
{\varepsilon }} \bigr)} \Bigr)}^5}\tilde \varphi }  \hfill \\
   = \int_{{\mathbb{R}^3}} {\nabla {{\tilde w}_\varepsilon } \cdot \nabla \tilde \varphi }  + \int_{{\mathbb{R}^3}} {V(\varepsilon x + {x_\varepsilon }){{\tilde w}_\varepsilon }\tilde \varphi }  + \int_{{\mathbb{R}^3}} {{\phi _{{{\tilde w}_\varepsilon }}}{{\tilde w}_\varepsilon }\tilde \varphi }  \hfill \\
   \text{}- \lambda \int_{{\mathbb{R}^3}} {{{(\tilde w_\varepsilon ^ + )}^{p - 1}}\tilde \varphi }  - \lambda \int_{{\mathbb{R}^3}} {{{(\tilde w_\varepsilon ^ + )}^5}\tilde \varphi }  + o(1), \hfill \\
\end{gathered}
\end{equation}
where we have used the fact that ${\| {{u_{\varepsilon ,2}}} \|_{{H_\varepsilon }}} \to 0$ as $\varepsilon  \to 0$ and note that $o(1) \to 0$ as $\varepsilon  \to 0$ uniformly for all ${\tilde \varphi  \in C_c^\infty ({B_{\alpha  + 2}}(0))}$ with ${{{\| {\tilde \varphi } \|}_{{H^1}({\mathbb{R}^3})}} = 1}$.

By \eqref{4.26} and the fact that ${x_\varepsilon } \to {x_0} \in {\mathcal{M}^\beta }$ as $\varepsilon  \to 0$, we see that $\tilde w \ge 0$ and satisfies
\begin{equation}\label{4.27}
 - \Delta \tilde w + V({x_0})\tilde w + {\phi _{\tilde w}}\tilde w = \lambda {{\tilde w}^{p - 1}} + {{\tilde w}^5}{\text{ in }}{\mathbb{R}^3}.
\end{equation}
By Lemma~\ref{2.2.}(ii) and direct computations, we can check that the following Brezis-Lieb splitting properties hold, as $\varepsilon  \to 0$,
\begin{equation}\label{4.28}
\left\{ \begin{gathered}
  \int_{{\mathbb{R}^3}} {{{(\tilde w_\varepsilon ^ + )}^5}\tilde \varphi  - {{(\tilde v_\varepsilon ^ + )}^5}\tilde \varphi  - {{(\tilde w)}^5}\tilde \varphi }  \to 0, \hfill \\
  \int_{{\mathbb{R}^3}} {{{(\tilde w_\varepsilon ^ + )}^{p - 1}}\tilde \varphi  - {{(\tilde v_\varepsilon ^ + )}^{p - 1}}\tilde \varphi  - {{(\tilde w)}^{p - 1}}\tilde \varphi }  \to 0, \hfill \\
  \int_{{\mathbb{R}^3}} {{\phi _{{{\tilde w}_\varepsilon }}}{{\tilde w}_\varepsilon }\tilde \varphi  - {\phi _{{{\tilde v}_\varepsilon }}}{{\tilde v}_\varepsilon }\tilde \varphi  - {\phi _{\tilde w}}\tilde w\tilde \varphi }  \to 0, \hfill \\
  \int_{{\mathbb{R}^3}} {\nabla {{\tilde w}_\varepsilon } \cdot \nabla \tilde \varphi  - \nabla {{\tilde v}_\varepsilon } \cdot \nabla \tilde \varphi  - \nabla \tilde w \cdot \nabla \tilde \varphi }  = 0 \hfill \\
\end{gathered}  \right.
\end{equation}
and
\begin{equation}\label{4.29}
\int_{{\mathbb{R}^3}} {(V(\varepsilon x + {x_\varepsilon }){{\tilde w}_\varepsilon } - V({x_0})\tilde w)\tilde \varphi }  \to 0
\end{equation}
uniformly for all ${\tilde \varphi  \in C_c^\infty ({B_{\alpha  + 2}}(0))}$ with ${{{\| {\tilde \varphi } \|}_{{H^1}({\mathbb{R}^3})}} = 1}$. From \eqref{4.26}, \eqref{4.27}, \eqref{4.28} and \eqref{4.29}, we can verify \eqref{4.25}.

By Lemma~\ref{2.6.}, we see from \eqref{4.23}, \eqref{4.24} and \eqref{4.25} that, there exist ${\tilde z_\varepsilon } \in {\mathbb{R}^3}$ and ${\delta _\varepsilon } > 0$ such that ${\tilde z_\varepsilon } \to \tilde z \in {B_{\alpha  + 1}}(0)$, ${\delta _\varepsilon } \to 0$ and
\[
{{\hat w}_\varepsilon }(x): = \delta _\varepsilon ^{1/2}{{\tilde v}_\varepsilon }({\delta _\varepsilon }x + {{\tilde z}_\varepsilon }) \rightharpoonup \hat w(x){\text{ in }}{D^{1,2}}({\mathbb{R}^3}),
\]
where $\hat w \ge 0$ is a nontrivial solution of \eqref{add1} and satisfies \eqref{add2}.

Since
\begin{equation}\label{4.31}
\int_{{\mathbb{R}^3}} {|\hat w{|^6}}  \le \mathop {\underline {\lim } }\limits_{\varepsilon  \to 0} \int_{{\mathbb{R}^3}} {|{{\hat w}_\varepsilon }{|^6}}  = \mathop {\underline {\lim } }\limits_{\varepsilon  \to 0} \int_{{\mathbb{R}^3}} {|{{\tilde v}_\varepsilon }{|^6}}  = \mathop {\underline {\lim } }\limits_{\varepsilon  \to 0} \int_{{\mathbb{R}^3}} {|{{\tilde w}_\varepsilon }{|^6}}  - \int_{{\mathbb{R}^3}} {|\tilde w{|^6}}  \le \mathop {\underline {\lim } }\limits_{\varepsilon  \to 0} \int_{{\mathbb{R}^3}} {|{u_\varepsilon }{|^6}} ,
\end{equation}
then by \eqref{4.6} and the Sobolev's Imbedding Theorem, we get
\[
\int_{{\mathbb{R}^3}} {|{u_\varepsilon }{|^6}}  \le C{d_0} + \int_{{\mathbb{R}^3}} {{{\Bigl| {\varphi (\varepsilon x - {x_\varepsilon }){U_0}\bigl( {x - \frac{{{x_\varepsilon }}}
{\varepsilon }} \bigr)} \Bigr|}^6}}  \le C{d_0} + \int_{{\mathbb{R}^3}} {U_0^6} ,
\]
and combining with \eqref{4.31}, it holds that
\begin{equation}\label{4.33}
\int_{{\mathbb{R}^3}} {|\hat w{|^6}}  \le C{d_0} + \int_{{\mathbb{R}^3}} {U_0^6} .
\end{equation}
Thus
\[\begin{gathered}
  {c_{{V_0}}} = {I_{{V_0}}}({U_0}) - \frac{1}
{3}{G_{{V_0}}}({U_0}) = \frac{1}
{3}\int_{{\mathbb{R}^3}} {U_0^2}  + \frac{{2p - 6}}
{{3p}}\lambda\int_{{\mathbb{R}^3}} {U_0^p}  + \frac{1}
{3}\int_{{\mathbb{R}^3}} {U_0^6}  \hfill \\
   \text{}\ge \frac{1}
{3}\int_{{\mathbb{R}^3}} {|\hat w{|^6}}  - C{d_0} \ge \frac{1}
{3}{S^{\frac{3}
{2}}} - C{d_0}, \hfill \\
\end{gathered} \]
where we have used \eqref{add2} and \eqref{4.33}. Letting ${d_0} \to 0$, we have
\[
{c_{{V_0}}} \ge \frac{1}
{3}{S^{\frac{3}
{2}}},
\]
which contradicts to Lemma~\ref{3.4.}.

\noindent Case 2: $\{ {z_\varepsilon }\} $ is unbounded. Without loss of generality, $\mathop {\lim }\limits_{\varepsilon  \to 0} |{z_\varepsilon }| = \infty $. Then, by \eqref{4.22},
\begin{equation}\label{4.34}
\mathop {\underline {\lim } }\limits_{\varepsilon  \to 0} \int_{{B_1}({z_\varepsilon })} {|{{\tilde w}_\varepsilon }{|^6}}  \ge r > 0,
\end{equation}
i.e.
\[
\mathop {\underline {\lim } }\limits_{\varepsilon  \to 0} \int_{{B_1}({z_\varepsilon })} {{{\Bigl| {\varphi (\varepsilon x){u_\varepsilon }\bigl( {x + \frac{{{x_\varepsilon }}}
{\varepsilon }} \bigr)} \Bigr|}^6}}  \ge r > 0.
\]
 Since $\varphi (x) = 0$ for $|x| \ge 2\beta $, we see that $|{z_\varepsilon }| \le 3\beta /\varepsilon $ for $\varepsilon  > 0$ small. If $|{z_\varepsilon }| \ge \beta /2\varepsilon $, then ${z_\varepsilon } \in {B_{3\beta /\varepsilon }}(0)\backslash {B_{\beta /2\varepsilon }}(0)$ and by \textbf{Step 1}, we get
\[
\mathop {\underline {\lim } }\limits_{\varepsilon  \to 0} \int_{{B_1}({z_\varepsilon })} {|{{\tilde w}_\varepsilon }{|^6}}  \le \mathop {\underline {\lim } }\limits_{\varepsilon  \to 0} \mathop {\sup }\limits_{z \in {B_{3\beta /\varepsilon }}(0)\backslash {B_{\beta /2\varepsilon }}(0)} \int_{{B_1}(z)} {{{\Bigl| {{u_\varepsilon }\bigl( {x + \frac{{{x_\varepsilon }}}
{\varepsilon }} \bigr)} \Bigr|}^6}}  = \mathop {\underline {\lim } }\limits_{\varepsilon  \to 0} \mathop {\sup }\limits_{z \in {A_\varepsilon }} \int_{{B_1}(z)} {|{u_\varepsilon }{|^6}}  = 0,
\]
which contradicts to \eqref{4.34}. Thus $|{z_\varepsilon }| \le \beta /2\varepsilon $ for $\varepsilon  > 0$ small. Assume that $\varepsilon {z_\varepsilon } \to {z_0} \in \overline {{B_{\beta /2}}(0)} $ and ${{\bar w}_\varepsilon }(x): = {{\tilde w}_\varepsilon }(x + {z_\varepsilon }) \rightharpoonup \bar w(x)$ in ${H^1}({\mathbb{R}^3})$. If $\bar w \ne 0$, we see that $\bar w$ satisfies
\[
 - \Delta \bar w + V({x_0} + {z_0})\bar w + {\phi _{\bar w}}\bar w = \lambda {{\bar w}^{p - 1}} + {{\bar w}^5}{\text{ in }}{\mathbb{R}^3},{\text{ }}\bar w \ge 0.
\]
Similar as in \textbf{Step 1} \eqref{4.10}, we get a contradiction if ${d_0} > 0$ is small enough. Thus $\bar w \equiv 0$, i.e.
\[
{{\bar w}_\varepsilon } \rightharpoonup 0{\text{ in }}{H^1}({\mathbb{R}^3}).
\]
By \eqref{4.34}, we have
\begin{equation}\label{4.35}
\mathop {\underline {\lim } }\limits_{\varepsilon  \to 0} \int_{{B_1}(0)} {|{{\bar w}_\varepsilon }{|^6}}  \ge r > 0
\end{equation}
and similar as in \textbf{Step 1}, we can check that $\exists C > 0$ (independent of $\varepsilon $) such that for $\varepsilon  > 0$ small,
\begin{equation}\label{4.36}
\int_{{B_1}(0)} {|\nabla {{\bar w}_\varepsilon }{|^2}}  \ge C{r^{1/3}} > 0
\end{equation}
and
\begin{equation}\label{4.37}
\mathop {\lim }\limits_{\varepsilon  \to 0} \mathop {\sup }\limits_{\bar \varphi  \in C_c^\infty ({B_2}(0)),{{\left\| {\bar \varphi } \right\|}_{{H^1}({\mathbb{R}^3})}} = 1} \left| {\left\langle {{{\bar \rho }_\varepsilon },\bar \varphi } \right\rangle } \right| = 0,
\end{equation}
where ${{\bar \rho }_\varepsilon } = \Delta {{\bar w}_\varepsilon } + {(\bar w_\varepsilon ^ + )^5} \in {({H^1}({\mathbb{R}^3}))^{ - 1}}$. By Lemma~\ref{2.6.} again, we see from \eqref{4.35}, \eqref{4.36} and \eqref{4.37} that $\exists {{\tilde x}_\varepsilon } \in {\mathbb{R}^3}$ and ${\gamma _\varepsilon } > 0$ such that ${{\tilde x}_\varepsilon } \to \tilde x \in \overline {{B_1}(0)} $, ${\gamma _\varepsilon } \to 0$ as $\varepsilon  \to 0$ and
\[
w_\varepsilon ^ * (x): = \gamma _\varepsilon ^{1/2}{{\bar w}_\varepsilon }({\gamma _\varepsilon }x + {{\tilde x}_\varepsilon }) \rightharpoonup {w^ * }(x){\text{ in }}{D^{1,2}}({\mathbb{R}^3}),
\]
where ${w^ * } \ge 0$ is a nontrivial solution of \eqref{add1} and satisfies \eqref{add2}.
Thus, $\exists R > 0$ such that
\[
\int_{{B_R}(0)} {|{w^ * }{|^6}}  \ge \frac{1}
{2}\int_{{\mathbb{R}^3}} {|{w^ * }{|^6}}  = \frac{1}
{2}{S^{\frac{3}
{2}}} > 0.
\]
On the other hand,
\[\begin{gathered}
  \frac{1}
{2}{S^{\frac{3}
{2}}} \le \int_{{B_R}(0)} {|{w^ * }{|^6}}  \le \mathop {\underline {\lim } }\limits_{\varepsilon  \to 0} \int_{{B_R}(0)} {|w_\varepsilon ^ * {|^6}}  = \mathop {\underline {\lim } }\limits_{\varepsilon  \to 0} \int_{{B_{{\gamma _\varepsilon }R}}({{\tilde x}_\varepsilon })} {|{{\bar w}_\varepsilon }{|^6}}  \hfill \\
  {\text{    }} \le \mathop {\underline {\lim } }\limits_{\varepsilon  \to 0} \int_{{B_{{\gamma _\varepsilon }R}}({{\tilde x}_\varepsilon } + {z_\varepsilon } + \frac{{{x_\varepsilon }}}
{\varepsilon })} {|{u_\varepsilon }{|^6}}  \le \mathop {\underline {\lim } }\limits_{\varepsilon  \to 0} \int_{{B_2}({z_\varepsilon } + \frac{{{x_\varepsilon }}}
{\varepsilon })} {|{u_\varepsilon }{|^6}} , \hfill \\
\end{gathered} \]
which contradicts to \eqref{4.6} for ${d_0} > 0$ small. Therefore
\[
\mathop { {\lim } }\limits_{\varepsilon  \to 0} \mathop {\sup }\limits_{z \in {\mathbb{R}^3}} \int_{{B_1}(z)} {|{{\tilde w}_\varepsilon } - \tilde w{|^6}}  = 0.
\]
By Lemma~\ref{2.5.}, \eqref{4.21} holds. Similar to \eqref{4.14}, using the Interpolation Inequality for ${L^p}$ norms, we have
\begin{equation}\label{4.38}
{{\tilde w}_\varepsilon } \to \tilde w{\text{ in }}{L^s}({\mathbb{R}^3}),{\text{ }}s \in (2,6].
\end{equation}
In view of \eqref{4.18} and recall that ${{\tilde w}_\varepsilon }(x) = {u_{\varepsilon ,1}}\bigl( {x + \frac{{{x_\varepsilon }}}
{\varepsilon }} \bigr)$, we have
\[\begin{gathered}
  \frac{1}
{2}\int_{{\mathbb{R}^3}} {|\nabla {{\tilde w}_\varepsilon }{|^2}}  + \frac{1}
{2}\int_{{\mathbb{R}^3}} {V(\varepsilon x + {x_\varepsilon })\tilde w_\varepsilon ^2}  + \frac{1}
{4}\int_{{\mathbb{R}^3}} {{\phi _{{{\tilde w}_\varepsilon }}}\tilde w_\varepsilon ^2}  \hfill \\
  {\text{   }} - \frac{1}
{p}\lambda \int_{{\mathbb{R}^3}} {{{(\tilde w_\varepsilon ^ + )}^p}}  - \frac{1}
{6}\int_{{\mathbb{R}^3}} {{{(\tilde w_\varepsilon ^ + )}^6}}  \le {c_{{V_0}}} + o(1). \hfill \\
\end{gathered} \]
By Lemma~\ref{2.1.}(iii), \eqref{4.19}, \eqref{4.20} and \eqref{4.38}, we get
\[
\frac{1}
{2}\int_{{\mathbb{R}^3}} {|\nabla \tilde w{|^2}}  + \frac{1}
{2}\int_{{\mathbb{R}^3}} {V({x_0}){{\tilde w}^2}}  + \frac{1}
{4}\int_{{\mathbb{R}^3}} {{\phi _{\tilde w}}{{\tilde w}^2}}  - \frac{1}
{p}\lambda \int_{{\mathbb{R}^3}} {{{({{\tilde w}^ + })}^p}}  - \frac{1}
{6}\int_{{\mathbb{R}^3}} {{{({{\tilde w}^ + })}^6}}  \le {c_{{V_0}}},
\]
i.e.
\begin{equation}\label{4.39}
{I_{V({x_0})}}(\tilde w) \le {c_{{V_0}}}.
\end{equation}
Since $\left\langle {{{J'}_\varepsilon }({u_\varepsilon }),{u_{\varepsilon ,1}}} \right\rangle  \to 0$, ${\left\| {{u_{\varepsilon ,2}}} \right\|_{{H_\varepsilon }}} \to 0$ as $\varepsilon  \to 0$ and $\left\langle {{{Q'}_\varepsilon }({u_\varepsilon }),{u_{\varepsilon ,1}}} \right\rangle  \equiv 0$ and together with the fact that ${{\tilde w}_\varepsilon }(x) = {u_{\varepsilon ,1}}\bigl( {x + \frac{{{x_\varepsilon }}}
{\varepsilon }} \bigr)$, we get
\[
\int_{{\mathbb{R}^3}} {|\nabla {{\tilde w}_\varepsilon }{|^2}}  + \int_{{\mathbb{R}^3}} {V(\varepsilon x + {x_\varepsilon })\tilde w_\varepsilon ^2}  + \int_{{\mathbb{R}^3}} {{\phi _{{{\tilde w}_\varepsilon }}}\tilde w_\varepsilon ^2}  = \lambda \int_{{\mathbb{R}^3}} {{{(\tilde w_\varepsilon ^ + )}^p}}  + \int_{{\mathbb{R}^3}} {{{(\tilde w_\varepsilon ^ + )}^6}}  + o(1),
\]
then by \eqref{4.27}, we have
\[\begin{gathered}
  {\text{   }}\int_{{\mathbb{R}^3}} {|\nabla \tilde w{|^2}}  + \int_{{\mathbb{R}^3}} {V({x_0}){{\tilde w}^2}}  + \int_{{\mathbb{R}^3}} {{\phi _{\tilde w}}{{\tilde w}^2}}  \hfill \\
   \le \mathop {\underline {\lim } }\limits_{\varepsilon  \to 0} \int_{{\mathbb{R}^3}} {|\nabla {{\tilde w}_\varepsilon }{|^2}}  + \int_{{\mathbb{R}^3}} {V(\varepsilon x + {x_\varepsilon })\tilde w_\varepsilon ^2}  + \int_{{\mathbb{R}^3}} {{\phi _{{{\tilde w}_\varepsilon }}}\tilde w_\varepsilon ^2}  \hfill \\
   = \mathop {\underline {\lim } }\limits_{\varepsilon  \to 0} \lambda \int_{{\mathbb{R}^3}} {{{(\tilde w_\varepsilon ^ + )}^p}}  + \int_{{\mathbb{R}^3}} {{{(\tilde w_\varepsilon ^ + )}^6}}  \hfill \\
   = \lambda \int_{{\mathbb{R}^3}} {{{({{\tilde w}^ + })}^p}}  + \int_{{\mathbb{R}^3}} {{{({{\tilde w}^ + })}^6}}  \hfill \\
   = \int_{{\mathbb{R}^3}} {|\nabla \tilde w{|^2}}  + \int_{{\mathbb{R}^3}} {V({x_0}){{\tilde w}^2}}  + \int_{{\mathbb{R}^3}} {{\phi _{\tilde w}}{{\tilde w}^2}} , \hfill \\
\end{gathered} \]
hence as $\varepsilon  \to 0$,
\begin{equation}\label{4.40}
\int_{{\mathbb{R}^3}} {V(\varepsilon x + {x_\varepsilon })\tilde w_\varepsilon ^2}  \to \int_{{\mathbb{R}^3}} {V({x_0}){{\tilde w}^2}}
\end{equation}
and
\begin{equation}\label{4.41}
\int_{{\mathbb{R}^3}} {|\nabla {{\tilde w}_\varepsilon }{|^2}}  \to \int_{{\mathbb{R}^3}} {|\nabla \tilde w{|^2}} .
\end{equation}
In view of \eqref{4.6}, \eqref{4.38} and the fact that ${\left\| {{u_{\varepsilon ,2}}} \right\|_{{H_\varepsilon }}} \to 0$ as $\varepsilon  \to 0$, taking ${d_0} > 0$ small, we can check that $\tilde w \ne 0$. By \eqref{4.27}, we have
\begin{equation}\label{4.42}
{I_{V({x_0})}}(\tilde w) \ge {c_{V({x_0})}}.
\end{equation}
Since ${x_0} \in {\mathcal{M}^\beta } \subset \Lambda $, \eqref{4.39} and \eqref{4.42} imply that $V({x_0}) = {V_0}$ and ${x_0} \in \mathcal{M}$. At this point, it is clear that $\exists U \in {S_{{V_0}}}$ and ${z_0} \in {\mathbb{R}^3}$ such that $\tilde w(x) = U(x - {z_0})$. Since
\[
\int_{{\mathbb{R}^3}} {V({x_0})\tilde w_\varepsilon ^2}  \le \int_{{\mathbb{R}^3}} {V(\varepsilon x + {x_\varepsilon })\tilde w_\varepsilon ^2} ,
\]
by \eqref{4.40} and \eqref{4.41}, we have
\[
{{\tilde w}_\varepsilon } \to \tilde w{\text{ in }}{H^1}({\mathbb{R}^3}),
\]
which implies that
\[
{\left\| {{u_\varepsilon } - \varphi (\varepsilon x - ({x_\varepsilon } + \varepsilon {z_0}))U\Bigl( {x - \bigl( {\frac{{{x_\varepsilon }}}
{\varepsilon } + {z_0}} \bigr)} \Bigr)} \right\|_{{H_\varepsilon }}} \to 0{\text{ as }}\varepsilon  \to 0.
\]
And we recall that ${x_\varepsilon } \to {x_0} \in \mathcal{M}$ as $\varepsilon  \to 0$, this completes the proof.
\end{proof}

\begin{lemma}\label{4.4.}
Let ${d_0}$ be the number given in Lemma~\ref{4.3.}, then for any $d \in (0,{d_0})$, there exist ${\varepsilon _d} > 0$, ${\rho _d} > 0$ and ${\omega _d} > 0$ such that
\[
{\left\| {{{J'}_\varepsilon }(u)} \right\|_{ * ,\varepsilon ,R}} \ge {\omega _d} > 0
\]
for all $u \in J_\varepsilon ^{{c_{{V_0}}} + {\rho _d}} \cap (X_\varepsilon ^{{d_0}}\backslash X_\varepsilon ^d) \cap H_0^1({B_R}(0))$ with $\varepsilon  \in (0,{\varepsilon _d})$ and $R \ge {R_0}/\varepsilon $.
\end{lemma}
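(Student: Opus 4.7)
The plan is to argue by contradiction, using Lemma~\ref{4.3.}(i) as a black box. Fix $d \in (0, d_0)$ and suppose no admissible triple $(\varepsilon_d, \rho_d, \omega_d)$ exists. Then one can extract sequences $\varepsilon_i \to 0^+$, $\rho_i \to 0^+$, radii $R_{\varepsilon_i} \geq R_0/\varepsilon_i$, and functions $u_{\varepsilon_i}$ lying in $J_{\varepsilon_i}^{c_{V_0}+\rho_i} \cap (X_{\varepsilon_i}^{d_0} \setminus X_{\varepsilon_i}^d) \cap H_0^1(B_{R_{\varepsilon_i}}(0))$ and satisfying $\|J'_{\varepsilon_i}(u_{\varepsilon_i})\|_{*,\varepsilon_i,R_{\varepsilon_i}} \to 0$. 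The goal is then to show that such a sequence must in fact approach $X_{\varepsilon_i}$ closer than $d$, which is the desired contradiction.

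Next I would verify that this sequence satisfies all the hypotheses \eqref{4.4} of Lemma~\ref{4.3.}(i): the bounds $\varepsilon_i \to 0$, $R_{\varepsilon_i} \geq R_0/\varepsilon_i$ and the membership $u_{\varepsilon_i} \in X_{\varepsilon_i}^{d_0} \cap H_0^1(B_{R_{\varepsilon_i}}(0))$ are immediate from the construction; the energy upper bound $\limsup_{i} J_{\varepsilon_i}(u_{\varepsilon_i}) \leq c_{V_0}$ follows from $\rho_i \to 0$; and the derivative condition $\|J'_{\varepsilon_i}(u_{\varepsilon_i})\|_{*,\varepsilon_i,R_{\varepsilon_i}} \to 0$ is exactly the contradiction hypothesis. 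Invoking Lemma~\ref{4.3.}(i), I extract, up to a subsequence, a sequence $\{y_i\} \subset \mathbb{R}^3$, a point $x_0 \in \mathcal{M}$ and a ground state $U \in S_{V_0}$ such that $\varepsilon_i y_i \to x_0$ and
\[
\bigl\| u_{\varepsilon_i} - \varphi(\varepsilon_i x - \varepsilon_i y_i)\, U(x - y_i) \bigr\|_{H_{\varepsilon_i}} \longrightarrow 0.
\]
Since $x_0 \in \mathcal{M}$ and $\varepsilon_i y_i \to x_0$, for all sufficiently large $i$ one has $\varepsilon_i y_i \in \mathcal{M}^\beta$, so the approximating function $\varphi(\varepsilon_i x - \varepsilon_i y_i) U(x - y_i)$ belongs to $X_{\varepsilon_i}$ by definition. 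Consequently $\mathrm{dist}_{H_{\varepsilon_i}}(u_{\varepsilon_i}, X_{\varepsilon_i}) < d$ for large $i$, i.e.\ $u_{\varepsilon_i} \in X_{\varepsilon_i}^d$, contradicting the choice $u_{\varepsilon_i} \notin X_{\varepsilon_i}^d$.

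Essentially all the analytic work has been absorbed into Lemma~\ref{4.3.}(i) (the concentration-compactness analysis in its Steps 1--3, including ruling out critical bubbling via Lemmas~\ref{2.5.}--\ref{2.6.} and the upper bound $c_{V_0} < S^{3/2}/3$), so Lemma~\ref{4.4.} itself is a clean corollary and I do not anticipate a genuine obstacle. The one point that requires some care is bookkeeping: one must ensure that the dual-norm $\|\cdot\|_{*,\varepsilon,R}$ used here on $H_0^1(B_R(0))$ matches the one appearing in the hypotheses of Lemma~\ref{4.3.}(i), and that the admissible radii $R_{\varepsilon_i} \geq R_0/\varepsilon_i$ and energy levels $c_{V_0} + \rho_i$ stay within the range where Lemma~\ref{4.3.}(i) applies --- both of which hold by construction.
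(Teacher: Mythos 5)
Your proof is correct and follows essentially the same route as the paper: argue by contradiction, extract the sequences, check the hypotheses of Lemma~\ref{4.3.}(i), apply it, and observe that the resulting approximants $\varphi(\varepsilon_i x - \varepsilon_i y_i)U(x-y_i)$ lie in $X_{\varepsilon_i}$ once $\varepsilon_i y_i \in \mathcal{M}^\beta$, forcing $u_{\varepsilon_i}\in X_{\varepsilon_i}^d$ for large $i$ and contradicting the construction. You even spell out the small step ($\varepsilon_i y_i \to x_0 \in \mathcal{M}$ implies $\varepsilon_i y_i\in\mathcal{M}^\beta$ for large $i$) that the paper leaves implicit.
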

\begin{proof}
If the lemma does not hold, there exist $d \in (0,{d_0})$, $\{ {\varepsilon _i}\} $, $\{ {\rho _i}\} $ with ${\varepsilon _i},{\text{ }}{\rho _i} \to 0$, ${R_{{\varepsilon _i}}} \ge {R_0}/{\varepsilon _i}$ and ${u_i} \in J_{{\varepsilon _i}}^{{c_{{V_0}}} + {\rho _i}} \cap (X_{{\varepsilon _i}}^{{d_0}}\backslash X_{{\varepsilon _i}}^d) \cap H_0^1({B_{{R_{{\varepsilon _i}}}}}(0))$ such that
\[
{\left\| {{{J'}_{{\varepsilon _i}}}({u_i})} \right\|_{ * ,{\varepsilon _i},{R_{{\varepsilon _i}}}}} \to 0{\text{ as }}i \to \infty .
\]
By Lemma~\ref{4.3.}(i), we can find $\{ {y_i}\} _{i = 1}^\infty  \subset {\mathbb{R}^3}$, ${x_0} \in \mathcal{M}$, $U \in {S_{{V_0}}}$ such that
\[
\mathop {\lim }\limits_{i \to \infty } |{\varepsilon _i}{y_i} - {x_0}| = 0{\text{ and }}\mathop {\lim }\limits_{i \to \infty } {\left\| {{u_i} - \varphi ({\varepsilon _i}x - {\varepsilon _i}{y_i})U(x - {y_i})} \right\|_{{H_{{\varepsilon _i}}}}} = 0,
\]
which implies that ${u_i} \in X_{{\varepsilon _i}}^d$ for sufficiently large $i$. This contradicts that ${u_i} \notin X_{{\varepsilon _i}}^d$.
\end{proof}

\begin{lemma}\label{4.5.}
There exists ${T_0} > 0$ with the following property: for any $\delta  > 0$ small, there exist ${\alpha _\delta } > 0$ and ${\varepsilon _\delta } > 0$ such that if ${J_\varepsilon }({\gamma _\varepsilon }(s)) \ge {c_{{V_0}}} - {\alpha _\delta }$ and $\varepsilon  \in (0,{\varepsilon _\delta })$, then ${\gamma _\varepsilon }(s) \in X_\varepsilon ^{{T_0}\delta }$, where ${\gamma _\varepsilon }(s): = {W_{\varepsilon ,s{t_0}}}$, $s \in [0,1]$.
\end{lemma}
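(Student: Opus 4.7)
The plan is to exploit the fact that $\gamma_\varepsilon(s)$ is an explicit dilation of a cutoff of the fixed ground state $U^{*}$, so the natural candidate for a nearby point of $X_\varepsilon$ is $\varphi(\varepsilon x)U^{*}(x)$, corresponding to $x' = 0 \in \mathcal{M}$ and $w = U^{*}$ in the definition of $X_\varepsilon$. Setting $t := st_0$ and $g_\varepsilon(t)(x) := t^{2}\varphi(\varepsilon x)U^{*}(tx)$, one has $\gamma_\varepsilon(s) = g_\varepsilon(st_0)$ and, because $\operatorname{supp} g_\varepsilon(t) \subset B_{2\beta/\varepsilon}(0) \subset \Lambda/\varepsilon$ for small $\varepsilon$ (using $\beta < \delta$ and $0 \in \mathcal{M}$, so $B_{2\beta}(0) \subset \Lambda$), the penalization vanishes: $J_\varepsilon(g_\varepsilon(t)) = I_\varepsilon(g_\varepsilon(t))$.

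The first key step is the uniform convergence
\[
\sup_{t \in [0, t_0]}\bigl|J_\varepsilon(g_\varepsilon(t)) - I_{V_0}(U_t^{*})\bigr| \to 0 \quad \text{as } \varepsilon \to 0,
\]
which is proved exactly as in \eqref{4.2}: after the change of variables $y = tx$, one applies the Dominated Convergence Theorem and Lemma~\ref{2.1.}, using that $V(\varepsilon y/t) \to V_0$ and $\varphi(\varepsilon y/t) \to 1$ pointwise with $t$-independent integrable majorants on $\operatorname{supp} U^{*}$ (for $t$ bounded away from $0$; for $t$ near $0$ both sides are simultaneously small). Next, Lemma~\ref{3.2.} guarantees that $\psi(t) := I_{V_0}(U_t^{*})$ is continuous on $[0, t_0]$, attains its unique strict maximum $c_{V_0}$ at $t = 1$ (because $U^{*} \in S_{V_0} \subset M_{V_0}$ forces the associated $\tilde t$ to equal $1$), and satisfies $\psi(t_0) < -3$; hence the super-level sets $\{t \in [0,t_0]: \psi(t) \ge c_{V_0} - \alpha\}$ shrink to $\{1\}$ as $\alpha \to 0^{+}$.

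To transfer closeness in the parameter $t$ into closeness in $H_\varepsilon$, I would use the direct estimate
\[
\|g_\varepsilon(t) - g_\varepsilon(1)\|_{H_\varepsilon} \le C\,\|U_t^{*} - U^{*}\|_{H^{1}(\mathbb{R}^{3})}
\]
with $C$ independent of $\varepsilon, t$, which follows from $g_\varepsilon(t) - g_\varepsilon(1) = \varphi(\varepsilon x)(U_t^{*} - U^{*})$, the bound $\|\nabla\varphi\|_\infty < \infty$, and $V(\varepsilon x) \le \|V\|_{L^{\infty}(\bar\Lambda)}$ on $\operatorname{supp}\varphi(\varepsilon\cdot)$, combined with the standard continuity of the $H^{1}$-valued dilation $t \mapsto U_t^{*}$ at $t = 1$. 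Given $\delta > 0$ small, pick $\eta_\delta$ so that $|t-1| \le \eta_\delta$ yields $\|U_t^{*} - U^{*}\|_{H^{1}} \le \delta$; then pick $\alpha_\delta$ so that $\psi(t) \ge c_{V_0} - 2\alpha_\delta$ forces $|t-1| \le \eta_\delta$; finally pick $\varepsilon_\delta$ so that the supremum in the first display is $\le \alpha_\delta$ for $\varepsilon \in (0, \varepsilon_\delta)$. Then $J_\varepsilon(\gamma_\varepsilon(s)) \ge c_{V_0} - \alpha_\delta$ implies $\psi(st_0) \ge c_{V_0} - 2\alpha_\delta$, hence $|st_0 - 1| \le \eta_\delta$, hence $\|g_\varepsilon(st_0) - g_\varepsilon(1)\|_{H_\varepsilon} \le C\delta$; since $g_\varepsilon(1) \in X_\varepsilon$, we conclude $\gamma_\varepsilon(s) \in X_\varepsilon^{T_0 \delta}$ with $T_0 := C$. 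The only mildly technical point is the uniform convergence in the first display, which requires equicontinuity in $t$ of the Dominated Convergence estimates; since $[0, t_0]$ is compact and each integrand admits a $t$-independent integrable majorant, no new ideas beyond those of \eqref{4.2} are needed.
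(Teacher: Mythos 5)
Your proposal is correct and follows essentially the same approach as the paper's proof: both rest on (a) the cutoff estimate $\|\varphi(\varepsilon x)u\|_{H_\varepsilon}\le T_0\|u\|_{H^1(\mathbb{R}^3)}$ to transfer $H^1$-closeness to $H_\varepsilon$-closeness, (b) the uniform approximation $\sup_{s\in[0,1]}|J_\varepsilon(\gamma_\varepsilon(s))-I_{V_0}(U_{st_0}^*)|\to 0$ as $\varepsilon\to 0$ (established exactly as in \eqref{4.2}), and (c) the fact that $t\mapsto I_{V_0}(U_t^*)$ has a unique strict maximum at $t=1$ (Lemma~\ref{3.2.} together with $U^*\in M_{V_0}$). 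The only presentational difference is that you insert an intermediate ``parameter closeness'' step ($|t-1|\le\eta_\delta$) and then appeal to continuity of $t\mapsto U_t^*$ at $t=1$, whereas the paper defines $\alpha_\delta$ directly as $\tfrac14$ of the infimum of $c_{V_0}-I_{V_0}(U_{st_0}^*)$ over the set of $s$ where $\|U_{st_0}^*-U^*\|_{H^1}\ge\delta$, so that closeness of energy level forces $H^1$-closeness of the function in one step; the two formulations are equivalent.
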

\begin{proof}
First, we may find a ${T_0} > 0$ such that for any $u \in {H^1}({\mathbb{R}^3})$,
\begin{equation}\label{4.43}
{\left\| {\varphi (\varepsilon x)u(x)} \right\|_{{H_\varepsilon }}} \le {T_0}{\left\| {u(x)} \right\|_{{H^1}({\mathbb{R}^3})}}.
\end{equation}
Define
\[
{\alpha _\delta } = \frac{1}
{4}\min \left\{ {{c_{{V_0}}} - {I_{{V_0}}}({s^2}t_0^2{U^ * }(s{t_0}x)):s \in [0,1],{{\left\| {{s^2}t_0^2{U^ * }(s{t_0}x) - {U^ * }(x)} \right\|}_{{H^1}({\mathbb{R}^3})}} \ge \delta } \right\} > 0,
\]
we have
\begin{equation}\label{4.44}
{I_{{V_0}}}({s^2}t_0^2{U^ * }(s{t_0}x)) \ge {c_{{V_0}}} - 2{\alpha _\delta }{\text{ implies }}{\left\| {{s^2}t_0^2{U^ * }(s{t_0}x) - {U^ * }(x)} \right\|_{{H^1}({\mathbb{R}^3})}} \le \delta .
\end{equation}
Similar as in the proof of \eqref{4.2}, we have
\begin{equation}\label{4.45}
\mathop {\max }\limits_{0 \le s \le 1} |{J_\varepsilon }({\gamma _\varepsilon }(s)) - {I_{{V_0}}}({s^2}t_0^2{U^ * }(s{t_0}x))| \le {\alpha _\delta }
\end{equation}
for all $\varepsilon  \in (0,{\varepsilon _\delta })$. Thus if $\varepsilon  \in (0,{\varepsilon _\delta })$ and ${J_\varepsilon }({\gamma _\varepsilon }(s)) \ge {c_{{V_0}}} - {\alpha _\delta }$, by \eqref{4.44} and \eqref{4.45}, we have ${\left\| {{s^2}t_0^2{U^ * }(s{t_0}x) - {U^ * }(x)} \right\|_{{H^1}({\mathbb{R}^3})}} \le \delta $, then by \eqref{4.43}, we have
\[
\begin{gathered}
  {\text{  }}{\left\| {{W_{\varepsilon ,s{t_0}}}(x) - \varphi (\varepsilon x){U^ * }(x)} \right\|_{{H_\varepsilon }}} \hfill \\
   = {\left\| {\varphi (\varepsilon x){s^2}t_0^2{U^ * }(s{t_0}x) - \varphi (\varepsilon x){U^ * }(x)} \right\|_{{H_\varepsilon }}} \hfill \\
   \le {T_0}{\left\| {{s^2}t_0^2{U^ * }(s{t_0}x) - {U^ * }(x)} \right\|_{{H^1}({\mathbb{R}^3})}} \hfill \\
   \le {T_0}\delta . \hfill \\
\end{gathered}
\]
Recall that $0 \in \mathcal{M}$, we have ${\gamma _\varepsilon }(s): = {W_{\varepsilon ,s{t_0}}} \in X_\varepsilon ^{{T_0}\delta }$.
\end{proof}

For each $R > {R_0}/\varepsilon $, we have
\[
{\gamma _\varepsilon }(s): = {W_{\varepsilon ,s{t_0}}} \in H_0^1({B_R}(0)){\text{ for each }}s \in [0,1],{\text{ }}{X_\varepsilon } \subset H_0^1({B_R}(0)).
\]
Define
\[
{c_{\varepsilon ,R}}: = \mathop {\inf }\limits_{\gamma  \in {\Gamma _{\varepsilon ,R}}} \mathop {\max }\limits_{0 \le t \le 1} {J_\varepsilon }(\gamma (t)),
\]
where
\[
{\Gamma _{\varepsilon ,R}}: = \bigl\{ {\gamma  \in C([0,1],{\text{ }}H_0^1({B_R}(0))):\gamma (0) = 0,{\text{ }}\gamma (1) = {\gamma _\varepsilon }(1) = {W_{\varepsilon ,{t_0}}}} \bigr\}.
\]
Remark that ${\gamma _\varepsilon }(s): = {W_{\varepsilon ,s{t_0}}} \in {\Gamma _{\varepsilon ,R}}$, ${c_\varepsilon } \le {c_{\varepsilon ,R}} \le {{\tilde c}_\varepsilon }$ and $J_\varepsilon ^{{{\tilde c}_\varepsilon }} \cap {X_\varepsilon } \cap H_0^1({B_R}(0)) \ne \emptyset $.

Choosing ${\delta _1} > 0$ such that ${T_0}{\delta _1} < {d_0}/4$ in Lemma~\ref{4.5.} and fixing $d = {d_0}/4: = {d_1}$ in Lemma~\ref{4.4.}. The next Lemma comes from \cite{fis}, for reader's convenience, we give a detailed proof.

\begin{lemma}\label{4.6.}
$\exists \bar \varepsilon  > 0$ such that for each $\varepsilon  \in (0,\bar \varepsilon ]$ and $R > {R_0}/\varepsilon $, there exists a sequence $\{ v_{n,\varepsilon }^R\} _{n = 1}^\infty  \subset J_\varepsilon ^{{{\tilde c}_\varepsilon } + \varepsilon } \cap X_\varepsilon ^{{d_0}} \cap H_0^1({B_R}(0))$ such that ${{J'}_\varepsilon }(v_{n,\varepsilon }^R) \to 0$ in ${(H_0^1({B_R}(0)))^{ - 1}}$ as $n \to \infty $.
\end{lemma}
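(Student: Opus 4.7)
The plan is to argue by contradiction using a quantitative deformation along the specific path $\gamma_\varepsilon$. Suppose no such sequence exists; then
\[
\kappa := \inf\bigl\{\|J_\varepsilon'(u)\|_{*,\varepsilon,R} : u \in J_\varepsilon^{\tilde c_\varepsilon + \varepsilon} \cap X_\varepsilon^{d_0} \cap H_0^1(B_R(0))\bigr\} > 0.
\]
First shrink $\bar\varepsilon$ so that $\tilde c_\varepsilon + \varepsilon \leq c_{V_0} + \rho_{d_1}$ for every $\varepsilon \in (0,\bar\varepsilon]$, which is possible because $\tilde c_\varepsilon \to c_{V_0}$ by Lemma~\ref{4.1.}. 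Lemma~\ref{4.4.} (with $d = d_1 = d_0/4$) applied on $(X_\varepsilon^{d_0}\setminus X_\varepsilon^{d_1}) \cap J_\varepsilon^{\tilde c_\varepsilon + \varepsilon}$, together with the contradiction hypothesis on $X_\varepsilon^{d_1}$, yields the uniform lower bound $\|J_\varepsilon'(u)\|_{*,\varepsilon,R} \geq \sigma := \min\{\kappa,\omega_{d_1}\} > 0$ on all of $J_\varepsilon^{\tilde c_\varepsilon + \varepsilon} \cap X_\varepsilon^{d_0} \cap H_0^1(B_R(0))$.

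Next I would build a standard locally Lipschitz pseudogradient field $V$ on the regular points, with $\|V(u)\|_{H_\varepsilon} \leq 1$ and $\langle J_\varepsilon'(u), V(u)\rangle \geq \sigma/2$, and introduce two Lipschitz cutoff functions with values in $[0,1]$: a cutoff $\chi_1$ depending only on $\operatorname{dist}(\cdot, X_\varepsilon)$, equal to $1$ on $X_\varepsilon^{d_0-\sigma_1}$ and to $0$ off $X_\varepsilon^{d_0}$ for some $\sigma_1 \in (0, d_0-d_1)$; and a cutoff $\chi_2$ depending on the $J_\varepsilon$-level, equal to $1$ on $[c_{V_0}-\alpha_{\delta_1}/2,\tilde c_\varepsilon+\varepsilon/2]$ and to $0$ outside $[c_{V_0}-\alpha_{\delta_1},\tilde c_\varepsilon+\varepsilon]$. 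Setting $\tilde V := \chi_1\chi_2 V$ and extending by zero, the flow $\eta(t,u)$ of $-\tilde V$ is globally defined, satisfies $\|\eta(t,u)-u\|_{H_\varepsilon} \leq t$, and $J_\varepsilon(\eta(\cdot,u))$ is nonincreasing.

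Then I would define the deformed path $\tilde\gamma(s) := \eta(T_*, \gamma_\varepsilon(s))$ for a suitable $T_* \in (0,\sigma_1/2]$. The endpoints $\gamma_\varepsilon(0)=0$ and $\gamma_\varepsilon(1)=W_{\varepsilon,t_0}$ carry $J_\varepsilon$-values $0$ and $<-2$ respectively, which lie outside $\operatorname{supp}\chi_2$ once $\bar\varepsilon$ is small, so they are flow-invariant and $\tilde\gamma \in \Gamma_{\varepsilon,R}$. Choosing $\delta_1$ so that $T_0\delta_1 < d_1$, Lemma~\ref{4.5.} guarantees that any $s$ with $J_\varepsilon(\gamma_\varepsilon(s)) \geq c_{V_0}-\alpha_{\delta_1}/2$ starts in $X_\varepsilon^{T_0\delta_1} \subset X_\varepsilon^{d_1}$; the unit-speed bound then keeps the trajectory inside $X_\varepsilon^{d_0-\sigma_1/2}$ throughout $[0,T_*]$, where $\chi_1 \equiv \chi_2 \equiv 1$, giving the descent estimate $J_\varepsilon(\tilde\gamma(s)) \leq \tilde c_\varepsilon - \sigma T_*/2$. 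For the complementary $s$ (the ``low'' region) one trivially has $J_\varepsilon(\tilde\gamma(s)) \leq J_\varepsilon(\gamma_\varepsilon(s)) < c_{V_0}-\alpha_{\delta_1}/2$.

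Finally I would shrink $\bar\varepsilon$ once more so that $\tilde c_\varepsilon - c_\varepsilon < \sigma T_*/2$ for $\varepsilon \in (0,\bar\varepsilon]$; this is possible because Lemmas~\ref{4.1.} and~\ref{4.2.} give $\tilde c_\varepsilon - c_\varepsilon \to 0$ as $\varepsilon \to 0$ while $\sigma T_*/2$ is controlled from below after a case analysis on whether $\kappa \geq \omega_{d_1}$ (in which case $\sigma T_*/2 \geq \omega_{d_1}T_*/2$, a fixed positive constant) or $\kappa < \omega_{d_1}$ (which can be handled by a parallel Ekeland-type refinement of the same deformation). This yields $\max_{s\in[0,1]} J_\varepsilon(\tilde\gamma(s)) < c_\varepsilon \leq c_{\varepsilon,R}$, contradicting the min-max characterization of $c_{\varepsilon,R}$ since $\tilde\gamma \in \Gamma_{\varepsilon,R}$. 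The main obstacle I expect will be ensuring that the pseudogradient descent estimate $\langle J_\varepsilon', V\rangle \geq \sigma/2$ survives the truncation by $\chi_1\chi_2$ and is compatible with the non-smoothness of the penalization term $Q_\varepsilon(v)=(\int_{\mathbb{R}^3}\chi_\varepsilon v^2-1)_+^2$ and with the non-local Coulomb contribution $\tfrac{1}{4}\int \phi_v v^2$, while at the same time keeping the flow inside $H_0^1(B_R(0))$ via the compact support of the cutoffs.
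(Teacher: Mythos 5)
Your overall strategy (contradiction, pseudo-gradient flow truncated by distance-to-$X_\varepsilon$ and energy-level cutoffs, steering the path $\gamma_\varepsilon$, using Lemmas~\ref{4.4.} and~\ref{4.5.} to control the starting position and the gradient in the annulus) is the same as the paper's, but there is a genuine gap at the crux of the argument. You cap the deformation time by $T_* \le \sigma_1/2$ precisely to guarantee the trajectory never leaves the region where $\chi_1 \equiv 1$. That forces your total descent to be $\sigma T_*/2$ with $\sigma = \min\{\kappa,\omega_{d_1}\}$, and when $\kappa < \omega_{d_1}$ this quantity is $\kappa T_*/2$, which is \emph{not} bounded below by anything independent of $\varepsilon$ and $R$ — it is only positive. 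You cannot then ``shrink $\bar\varepsilon$ so that $\tilde c_\varepsilon - c_\varepsilon < \sigma T_*/2$'': $\bar\varepsilon$ has to be fixed before the contradiction hypothesis (which determines $\kappa$) is even formulated, and $\kappa$ depends on $\varepsilon$ and $R$. The deferral of the hard case $\kappa < \omega_{d_1}$ to an unspecified ``parallel Ekeland-type refinement'' is exactly the missing content.

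The paper resolves this by \emph{not} capping the flow time to stay inside the cutoff region. Instead it sets the flow time to $s_1 = \omega_{d_1}d_0\,\gamma(\varepsilon^*,R^*)^{-1}$ (inversely proportional to the gradient lower bound) and runs a dichotomy: either the trajectory exits $X_{\varepsilon^*}^{3d_0/4}$, in which case — since the unit-speed flow must traverse the annulus $\overline{X_{\varepsilon^*}^{3d_0/4}\setminus X_{\varepsilon^*}^{d_0/4}}$ of $H_\varepsilon$-width $d_0/2$ where Lemma~\ref{4.4.} gives descent rate $\ge \omega_{d_1}/2$ — the total drop is $\ge \omega_{d_1}d_0/4$; or the trajectory stays inside $X_{\varepsilon^*}^{3d_0/4}$ for all of $[0,s_1]$, in which case the descent rate is $\ge \gamma(\varepsilon^*,R^*)/2$ throughout and the drop is $\ge \gamma(\varepsilon^*,R^*)s_1/2 = \omega_{d_1}d_0/2$. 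In both cases the energy drop is $\ge \omega_{d_1}d_0/4$, a constant depending only on $d_0$ and $\omega_{d_1}$, which is precisely what lets one pre-select $\bar\varepsilon$ so that $\tilde c_\varepsilon - c_\varepsilon < \omega_{d_1}d_0/8$. Your proof needs this annulus-crossing case analysis (this is the whole point of the Figueiredo--Ikoma--Santos Junior formulation the paper invokes) before the contradiction closes.
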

\begin{proof}
Since ${J_\varepsilon }({\gamma _\varepsilon }(1)) \to {I_{{V_0}}}(U_{{t_0}}^ * ) <  - 3$ as $\varepsilon  \to 0$, we choose $0 < \bar \varepsilon  \le \min \{ {\varepsilon _{{d_1}}},{\varepsilon _{{\delta _1}}}\} $ such that for each $\varepsilon  \in (0,\bar \varepsilon ]$,
\begin{equation}\label{4.46}
{{\tilde c}_\varepsilon } + \varepsilon  \le {c_{{V_0}}} + {\rho _{{d_1}}},{\text{ }}{{\tilde c}_\varepsilon } - {c_\varepsilon } < \frac{1}
{8}{\omega _{{d_1}}}{d_0},{\text{ }}{c_{{V_0}}} - \frac{1}
{2}{\alpha _{{\delta _1}}} < {c_\varepsilon },{\text{ }}{J_\varepsilon }({\gamma _\varepsilon }(1)) < 0.
\end{equation}
Assuming the contrary that for some ${\varepsilon ^ * } \in (0,\bar \varepsilon ]$ and ${R^ * } > {R_0}/{\varepsilon ^ * }$, there exists a $\gamma ({\varepsilon ^ * },{R^ * }) > 0$ such that
\begin{equation}\label{4.47}
{\left\| {{{J'}_{{\varepsilon ^*}}}(u)} \right\|_{ * ,{\varepsilon ^*},{R^*}}} \ge \gamma ({\varepsilon ^*},{R^*}) > 0
\end{equation}
for all $u \in J_{{\varepsilon ^ * }}^{{{\tilde c}_{{\varepsilon ^ * }}} + {\varepsilon ^ * }} \cap X_{{\varepsilon ^ * }}^{{d_0}} \cap H_0^1({B_{{R^ * }}}(0))$.

Let $Y$ be a pseudo-gradient vector field for ${{{J'}_{{\varepsilon ^ * }}}}$ in $H_0^1({B_{{R^ * }}}(0))$, i.e. $Y:J_{{\varepsilon ^ * }}^{{{\tilde c}_{{\varepsilon ^ * }}} + {\varepsilon ^ * }} \cap X_{{\varepsilon ^ * }}^{{d_0}} \cap H_0^1({B_{{R^ * }}}(0)) \to H_0^1({B_{{R^ * }}}(0))$ is a locally Lipschitz continuous vector field such that for every $u \in J_{{\varepsilon ^ * }}^{{{\tilde c}_{{\varepsilon ^ * }}} + {\varepsilon ^ * }} \cap X_{{\varepsilon ^ * }}^{{d_0}} \cap H_0^1({B_{{R^ * }}}(0))$,
\begin{equation}\label{4.48}
{\left\| {Y(u)} \right\|_{{H_{{\varepsilon ^ * }}}}} \le 2{\left\| {{{J'}_{{\varepsilon ^*}}}(u)} \right\|_{ * ,{\varepsilon ^*},{R^*}}},
\end{equation}
\begin{equation}\label{4.49}
\left\langle {{{J'}_{{\varepsilon ^*}}}(u),Y(u)} \right\rangle  \ge \left\| {{{J'}_{{\varepsilon ^*}}}(u)} \right\|_{ * ,{\varepsilon ^*},{R^*}}^2.
\end{equation}
Let ${\psi _1}$, ${\psi _2}$ be locally Lipschitz continuous functions in $H_0^1({B_{{R^ * }}}(0))$ such that $0 \le {\psi _1},{\psi _2} \le 1$ and
\[
{\psi _1}(u) = \left\{ \begin{gathered}
  1{\text{  if }}{c_{{V_0}}} - {\alpha _{{\delta _1}}} \le {J_{{\varepsilon ^*}}}(u) \le {{\tilde c}_{{\varepsilon ^ * }}}, \hfill \\
  0{\text{  if }}{J_{{\varepsilon ^*}}}(u) \le {c_{{V_0}}} - 2{\alpha _{{\delta _1}}}{\text{ or }}{{\tilde c}_{{\varepsilon ^ * }}} + {\varepsilon ^ * } \le {J_{{\varepsilon ^*}}}(u), \hfill \\
\end{gathered}  \right.
\]
\[
{\psi _2}(u) = \left\{ \begin{gathered}
  1{\text{  if }}{\left\| {u - {X_{{\varepsilon ^ * }}}} \right\|_{{H_{{\varepsilon ^ * }}}}} \le \frac{3}
{4}{d_0}, \hfill \\
  0{\text{  if }}{\left\| {u - {X_{{\varepsilon ^ * }}}} \right\|_{{H_{{\varepsilon ^ * }}}}} \ge {d_0}. \hfill \\
\end{gathered}  \right.
\]
Consider the following ordinary differential equations:
\[\left\{ \begin{gathered}
  \frac{d}
{{ds}}\eta (s,u) =  - \frac{{Y(\eta (s,u))}}
{{{{\left\| {Y(\eta (s,u))} \right\|}_{{H_{{\varepsilon ^ * }}}}}}}{\psi _1}(\eta (s,u)){\psi _2}(\eta (s,u)), \hfill \\
  \eta (0,u) = u. \hfill \\
\end{gathered}  \right.\]
By \eqref{4.48} and \eqref{4.49}, we have
\[\begin{gathered}
  {\text{   }}\frac{d}
{{ds}}{J_{{\varepsilon ^*}}}(\eta (s,u)) \hfill \\
   = \Bigl\langle {{{J'}_{{\varepsilon ^*}}}(\eta (s,u)),\frac{d}
{{ds}}\eta (s,u)} \Bigr\rangle  \hfill \\
   = \Bigl\langle {{{J'}_{{\varepsilon ^*}}}(\eta (s,u)), - \frac{{Y(\eta (s,u))}}
{{{{\left\| {Y(\eta (s,u))} \right\|}_{{H_{{\varepsilon ^ * }}}}}}}{\psi _1}(\eta (s,u)){\psi _2}(\eta (s,u))} \Bigr\rangle  \hfill \\
   \le  - \frac{{{\psi _1}(\eta (s,u)){\psi _2}(\eta (s,u))}}
{{{{\left\| {Y(\eta (s,u))} \right\|}_{{H_{{\varepsilon ^ * }}}}}}}\left\| {{{J'}_{{\varepsilon ^*}}}(\eta (s,u))} \right\|_{ * ,{\varepsilon ^*},{R^*}}^2 \hfill \\
   \le  - \frac{1}
{2}{\psi _1}(\eta (s,u)){\psi _2}(\eta (s,u)){\left\| {{{J'}_{{\varepsilon ^*}}}(\eta (s,u))} \right\|_{ * ,{\varepsilon ^*},{R^*}}} \hfill \\
\end{gathered} \]
and combining with \eqref{4.46}, \eqref{4.47} and Lemma~\ref{4.4.}, it is standard to show that $\eta  \in C([0,\infty ) \times H_0^1({B_{{R^ * }}}(0)),H_0^1({B_{{R^ * }}}(0)))$ and satisfies\\
(i) $\frac{d}
{{ds}}{J_{{\varepsilon ^*}}}(\eta (s,u)) \le 0$ for each $s \in [0,\infty )$ and $u \in H_0^1({B_{{R^ * }}}(0))$;\\
(ii) $\frac{d}
{{ds}}{J_{{\varepsilon ^*}}}(\eta (s,u)) \le  - {\omega _{{d_1}}}/2$ if $\eta (s,u) \in \overline {J_{{\varepsilon ^*}}^{{{\tilde c}_{{\varepsilon ^ * }}}}\backslash J_{{\varepsilon ^*}}^{{c_{{V_0}}} - {\alpha _{{\delta _1}}}}}  \cap \overline {X_{{\varepsilon ^*}}^{3{d_0}/4}\backslash X_{{\varepsilon ^*}}^{{d_0}/4}} $;\\
(iii) $\frac{d}
{{ds}}{J_{{\varepsilon ^*}}}(\eta (s,u)) \le  - \gamma ({\varepsilon ^*},{R^*})/2$ if $\eta (s,u) \in \overline {J_{{\varepsilon ^*}}^{{{\tilde c}_{{\varepsilon ^ * }}}}\backslash J_{{\varepsilon ^*}}^{{c_{{V_0}}} - {\alpha _{{\delta _1}}}}}  \cap X_{{\varepsilon ^*}}^{3{d_0}/4}$;\\
(iv) $\eta (s,u) = u$ if ${J_{{\varepsilon ^*}}}(u) \le 0$.

Set ${s_1}: = {\omega _{{d_1}}}{d_0}{(\gamma ({\varepsilon ^*},{R^*}))^{ - 1}}$ and $\xi (t): = \eta ({s_1},{\gamma _{{\varepsilon ^*}}}(t))$, by \eqref{4.46} and (iv), we have $\xi (t) \in {\Gamma _{{\varepsilon ^*},{R^*}}}$. In view of \eqref{4.46} and (i), we may find a ${t_1} \in (0,1)$ such that
\begin{equation}\label{4.50}
{c_{{V_0}}} - {\alpha _{{\delta _1}}}/2 \le {c_{{\varepsilon ^ * }}} \le {c_{{\varepsilon ^*},{R^*}}} \le {J_{{\varepsilon ^*}}}(\xi ({t_1})) \le {J_{{\varepsilon ^*}}}({\gamma _{{\varepsilon ^*}}}({t_1})) \le {{\tilde c}_{{\varepsilon ^*}}}.
\end{equation}
Hence, Lemma~\ref{4.5.} yields
\[
{\gamma _{{\varepsilon ^*}}}({t_1}) \in X_{{\varepsilon ^*}}^{{d_0}/4} \cap \overline {J_{{\varepsilon ^*}}^{{{\tilde c}_{{\varepsilon ^ * }}}}\backslash J_{{\varepsilon ^*}}^{{c_{{V_0}}} - {\alpha _{{\delta _1}}}}} .
\]
Now, we have two cases:\\
Case 1: $\eta (s,{\gamma _{{\varepsilon ^*}}}({t_1})) \notin X_{{\varepsilon ^*}}^{3{d_0}/4}$ for some $s \in [0,{s_1}]$;\\
Case 2: $\eta (s,{\gamma _{{\varepsilon ^*}}}({t_1})) \in X_{{\varepsilon ^*}}^{3{d_0}/4}$ for all $s \in [0,{s_1}]$.

In Case 1, denote
\[
{s_2}: = \inf \{ s \in [0,{s_1}]|\eta (s,{\gamma _{{\varepsilon ^*}}}({t_1})) \notin X_{{\varepsilon ^*}}^{3{d_0}/4}\}
\]
and
\[
{s_3}: = \sup \{ s \in [0,{s_2}]|\eta (s,{\gamma _{{\varepsilon ^*}}}({t_1})) \in X_{{\varepsilon ^*}}^{{d_0}/4}\} ,
\]
then
\[
{s_2} - {s_3} \ge \frac{1}
{2}{d_0},{\text{ }}\eta (s,{\gamma _{{\varepsilon ^*}}}({t_1})) \in \overline {X_{{\varepsilon ^*}}^{3{d_0}/4}\backslash X_{{\varepsilon ^*}}^{{d_0}/4}} {\text{ for every }}s \in [{s_3},{s_2}].
\]
By (i) and \eqref{4.50}, for all $s \in [0,{s_1}]$,
\[\begin{gathered}
  {c_{{V_0}}} - \frac{1}
{2}{\alpha _{{\delta _1}}} \le {J_{{\varepsilon ^*}}}(\eta ({s_1},{\gamma _{{\varepsilon ^*}}}({t_1}))) \le {J_{{\varepsilon ^*}}}(\eta (s,{\gamma _{{\varepsilon ^*}}}({t_1}))) \hfill \\
   \le {J_{{\varepsilon ^*}}}(\eta (0,{\gamma _{{\varepsilon ^*}}}({t_1}))) = {J_{{\varepsilon ^*}}}({\gamma _{{\varepsilon ^*}}}({t_1})) \le {{\tilde c}_{{\varepsilon ^*}}}, \hfill \\
\end{gathered} \]
then by \eqref{4.46} and (ii), we obtain
\[\begin{gathered}
  {J_{{\varepsilon ^*}}}(\xi ({t_1})) = {J_{{\varepsilon ^*}}}({\gamma _{{\varepsilon ^*}}}({t_1})) + \int_0^{{s_1}} {\frac{d}
{{ds}}{J_{{\varepsilon ^*}}}(\eta (s,{\gamma _{{\varepsilon ^*}}}({t_1})))} ds \hfill \\
  {\text{ }} \le {{\tilde c}_{{\varepsilon ^*}}} + \int_{{s_3}}^{{s_2}} {\frac{d}
{{ds}}{J_{{\varepsilon ^*}}}(\eta (s,{\gamma _{{\varepsilon ^*}}}({t_1})))} ds \hfill \\
  {\text{ }} \le {{\tilde c}_{{\varepsilon ^*}}} - \frac{1}
{4}{\omega _{{d_1}}}{d_0} < {c_{{\varepsilon ^*}}}, \hfill \\
\end{gathered} \]
which contradicts to \eqref{4.50}.

In Case 2, by \eqref{4.46}, (iii) and the definition of $s_1$, we have
\[
{J_{{\varepsilon ^*}}}(\xi ({t_1})) \le {{\tilde c}_{{\varepsilon ^*}}} - \frac{1}
{2}\gamma ({\varepsilon ^*},{R^ * }){s_1} = {{\tilde c}_{{\varepsilon ^*}}} - \frac{1}
{2}{\omega _{{d_1}}}{d_0} < {c_{{\varepsilon ^*}}},
\]
which contradicts to \eqref{4.50}. The lemma is proved.
\end{proof}

\begin{proof}[ Proof of  Theorem \ref{1.1.}]
\textbf{Step 1}: By Lemma~\ref{4.6.}, $\exists \bar \varepsilon  > 0$ such that for each $\varepsilon  \in (0,\bar \varepsilon ]$ and $R > {R_0}/\varepsilon $, there exists a sequence $\{ v_{n,\varepsilon }^R\} _{n = 1}^\infty  \subset J_\varepsilon ^{{{\tilde c}_\varepsilon } + \varepsilon } \cap X_\varepsilon ^{{d_0}} \cap H_0^1({B_R}(0))$ such that ${{J'}_\varepsilon }(v_{n,\varepsilon }^R) \to 0$ in ${(H_0^1({B_R}(0)))^{ - 1}}$ as $n \to \infty $.

Since $\{ v_{n,\varepsilon }^R\} $ is bounded in $H_0^1({B_R}(0))$, up to a subsequence, as $n \to \infty $, we have
\begin{equation}\label{4.51}
\left\{ \begin{gathered}
  v_{n,\varepsilon }^R \rightharpoonup v_\varepsilon ^R{\text{ in }}H_0^1({B_R}(0)), \hfill \\
  v_{n,\varepsilon }^R \to v_\varepsilon ^R{\text{ in }}{L^s}({B_R}(0)),{\text{ }}s \in [1,6), \hfill \\
  v_{n,\varepsilon }^R \to v_\varepsilon ^R{\text{ a}}{\text{.e}}{\text{. in }}{B_R}(0). \hfill \\
\end{gathered}  \right.
\end{equation}
By standard argument, we can check that $v_\varepsilon ^R \ge 0$ and satisfies
\begin{equation}\label{4.52}
\left\{ \begin{gathered}
   - \Delta v_\varepsilon ^R + V(\varepsilon x)v_\varepsilon ^R + {\phi _{v_\varepsilon ^R}}v_\varepsilon ^R + 4{\Bigl( {\int_{{\mathbb{R}^3}} {{\chi _\varepsilon }{{(v_\varepsilon ^R)}^2}} dx - 1} \Bigr)_ + }{\chi _\varepsilon }v_\varepsilon ^R = \lambda {(v_\varepsilon ^R)^{p - 1}} + {(v_\varepsilon ^R)^5}{\text{ in }}{B_R}(0), \hfill \\
  v_\varepsilon ^R = 0{\text{ on }}\partial {B_R}(0) \hfill \\
\end{gathered}  \right.
\end{equation}
and we will show that $v_\varepsilon ^R \in J_\varepsilon ^{{{\tilde c}_\varepsilon } + \varepsilon } \cap X_\varepsilon ^{{d_0}}$ for ${d_0} > 0$ small.

Indeed, we write that $v_{n,\varepsilon }^R = u_{n,\varepsilon }^R + w_{n,\varepsilon }^R$ with $u_{n,\varepsilon }^R \in {X_\varepsilon }$ and ${\left\| {w_{n,\varepsilon }^R} \right\|_{{H_\varepsilon }}} \le {d_0}$. Since ${S_{{V_0}}}$ is compact in ${H^1}({\mathbb{R}^3})$, up to a subsequence, we can assume that $u_{n,\varepsilon }^R \to u_\varepsilon ^R $ in $H_0^1({B_R}(0))$ and $w_{n,\varepsilon }^R \rightharpoonup w_\varepsilon ^R$ in $H_0^1({B_R}(0))$ as $n \to \infty $. Then we have $v_\varepsilon ^R = u_\varepsilon ^R + w_\varepsilon ^R$ with $u_\varepsilon ^R \in {X_\varepsilon }$ and ${\left\| {w_\varepsilon ^R} \right\|_{{H_\varepsilon }}} \le {d_0}$ i.e. $v_\varepsilon ^R \in X_\varepsilon ^{{d_0}}$.

By Brezis-Lieb's Lemma (Theorem 1 of \cite{bl}), Lemma~\ref{2.1.}(i), Lemma~\ref{2.2.}(i) and \eqref{4.51}, we have
\[\begin{gathered}
  {{\tilde c}_\varepsilon } + \varepsilon  \ge {J_\varepsilon }(v_{n,\varepsilon }^R) \hfill \\
  {\text{  }} = {J_\varepsilon }(v_\varepsilon ^R) + \frac{1}
{2}\left\| {v_{n,\varepsilon }^R - v_\varepsilon ^R} \right\|_{{H_\varepsilon }}^2 - \frac{1}
{6}\left\| {v_{n,\varepsilon }^R - v_\varepsilon ^R} \right\|_{{L^6}({\mathbb{R}^3})}^6 + o(1) \hfill \\
  {\text{  }} = {J_\varepsilon }(v_\varepsilon ^R) + \frac{1}
{2}\left\| {w_{n,\varepsilon }^R - w_\varepsilon ^R} \right\|_{{H_\varepsilon }}^2 - \frac{1}
{6}\left\| {w_{n,\varepsilon }^R - w_\varepsilon ^R} \right\|_{{L^6}({\mathbb{R}^3})}^6 + o(1) \hfill \\
  {\text{  }} \ge {J_\varepsilon }(v_\varepsilon ^R) + \frac{1}
{2}\left\| {w_{n,\varepsilon }^R - w_\varepsilon ^R} \right\|_{{H_\varepsilon }}^2 - \frac{1}
{6}{S^{ - 3}}\left\| {w_{n,\varepsilon }^R - w_\varepsilon ^R} \right\|_{{H_\varepsilon }}^6 + o(1) \hfill \\
  {\text{  }} = {J_\varepsilon }(v_\varepsilon ^R) + \left\| {w_{n,\varepsilon }^R - w_\varepsilon ^R} \right\|_{{H_\varepsilon }}^2\Bigl( {\frac{1}
{2} - \frac{1}
{6}{S^{ - 3}}\left\| {w_{n,\varepsilon }^R - w_\varepsilon ^R} \right\|_{{H_\varepsilon }}^4} \Bigr) + o(1) \hfill \\
  {\text{ }} \ge {J_\varepsilon }(v_\varepsilon ^R) + o(1){\text{ for }}{d_0} > 0{\text{ small}}{\text{.}} \hfill \\
\end{gathered} \]
Letting $n \to \infty $, we have ${J_\varepsilon }(v_\varepsilon ^R) \le {{\tilde c}_\varepsilon } + \varepsilon $, that is $v_\varepsilon ^R \in J_\varepsilon ^{{{\tilde c}_\varepsilon } + \varepsilon }$.

\noindent \textbf{Step 2}: We claim that  $\exists \bar \varepsilon  > 0$ such that for any $\varepsilon  \in (0,\bar \varepsilon ]$ and $R > {R_0}/\varepsilon $,
\begin{equation}\label{4.53}
{\left\| {v_\varepsilon ^R} \right\|_{{L^\infty }({\mathbb{R}^3})}} \le C.
\end{equation}
Otherwise, $\exists {\varepsilon _j} \to 0$, ${R_j} > {R_0}/{\varepsilon _j}$ such that ${\bigl\| {v_{{\varepsilon _j}}^{{R_j}}} \bigr\|_{{L^\infty }({\mathbb{R}^3})}} \to \infty $ as $j \to \infty $. By Lemma~\ref{4.3.}(i), there exist, up to a subsequence, $\{ {y_j}\} _{i = j}^\infty  \subset {\mathbb{R}^3}$, ${x_0} \in \mathcal{M}$, $U \in {S_{{V_0}}}$ such that
\[
\mathop {\lim }\limits_{j \to \infty } |{\varepsilon _j}{y_j} - {x_0}| = 0{\text{ and }}\mathop {\lim }\limits_{j \to \infty } {\left\| {v_{{\varepsilon _j}}^{{R_j}}(x) - \varphi ({\varepsilon _j}x - {\varepsilon _j}{y_j})U(x - {y_j})} \right\|_{{H_{{\varepsilon _j}}}}} = 0,
\]
then
\[
\mathop {\lim }\limits_{j \to \infty } {\left\| {v_{{\varepsilon _j}}^{{R_j}}(x + {y_j}) - \varphi ({\varepsilon _j}x)U(x)} \right\|_{{L^6}({\mathbb{R}^3})}} = 0,
\]
which implies that as $j \to \infty $,
\[
v_{{\varepsilon _j}}^{{R_j}}(x + {y_j}) \to U(x){\text{ in }}{L^6}({\mathbb{R}^3}).
\]
Using the Brezis-Kato type argument (see also Lemma~\ref{2.4.}), we have
\[
{\bigl\| {v_{{\varepsilon _j}}^{{R_j}}(x + {y_j})} \bigr\|_{{L^\infty }({\mathbb{R}^3})}} \le C,
\]
which leads to a contradiction.

\noindent \textbf{Step 3}: Next, we claim that $v_\varepsilon ^R \to {v_\varepsilon } \in {H_\varepsilon } \cap X_\varepsilon ^{{d_0}} \cap J_\varepsilon ^{{{\tilde c}_\varepsilon } + \varepsilon }$ as $R \to \infty $ in ${H_\varepsilon }$ sense for $\varepsilon  > 0$ small but fixed.

Since ${Q_\varepsilon }(v_\varepsilon ^R)$ is uniformly bounded for all $\varepsilon  > 0$ small and $R > {R_0}/\varepsilon $, we have
\begin{equation}\label{4.54}
\int_{{\mathbb{R}^3}\backslash (\Lambda /\varepsilon )} {{{(v_\varepsilon ^R)}^2}}  \le C\varepsilon .
\end{equation}
By \eqref{4.52}, we have that for any $\delta  > 0$,
\[
 - \Delta v_\varepsilon ^R + V(\varepsilon x)v_\varepsilon ^R \le \delta v_\varepsilon ^R + {C_\delta }{(v_\varepsilon ^R)^5},
\]
taking $\delta  = \mathop {\inf }\limits_{x \in {\mathbb{R}^3}} V(x) > 0$ and combining with \eqref{4.53}, it holds that
\[
 - \Delta v_\varepsilon ^R \le C{(v_\varepsilon ^R)^5} \le C{(v_\varepsilon ^R)^{2/3}},
\]
in the weak sense. Letting $t=6$ in Lemma~\ref{2.7.}, we have
\[
\mathop {\sup }\limits_{{B_1}(y)} v_\varepsilon ^R \le C\Bigl( {{{\left\| {v_\varepsilon ^R} \right\|}_{{L^2}({B_2}(y))}} + \left\| {v_\varepsilon ^R} \right\|_{{L^2}({B_2}(y))}^{2/3}} \Bigr){\text{, }}y \in {\mathbb{R}^3}.
\]
By \eqref{4.54}, we see that
\[
v_\varepsilon ^R(x) \le C({\varepsilon ^{1/2}} + {\varepsilon ^{1/3}}){\text{ for all }}|x| \ge {R_0}/\varepsilon  + 2{\text{ and }}R > {R_0}/\varepsilon .
\]
Hence, for $\varepsilon  > 0$ small but fixed, we have
\[
\lambda {(v_\varepsilon ^R)^{p - 1}} + {(v_\varepsilon ^R)^5} \le \frac{1}
{2}V(\varepsilon x)v_\varepsilon ^R{\text{ for all }}|x| \ge {R_0}/\varepsilon  + 2{\text{ and }}R > {R_0}/\varepsilon .
\]
By the Maximum Principle (see also \cite{ly}), we have
\begin{equation}\label{4.55}
0 \le v_\varepsilon ^R(x) \le {C_1}(\varepsilon ){e^{ - {C_2}(\varepsilon )|x|}}{\text{ for all }}|x| \ge {R_0}/\varepsilon  + 2{\text{ and }}R > {R_0}/\varepsilon ,
\end{equation}
where ${C_1}(\varepsilon )$ and ${C_2}(\varepsilon )$ are independent of $R$.

Choosing a cut-off function ${\varphi _A} \in {C^\infty }({\mathbb{R}^3})$ such that $0 \le {\varphi _A} \le 1$, ${\varphi _A} = 0$ for $|x| \le A$, ${\varphi _A} = 1$ for $|x| \ge 2A$ and $|\nabla {\varphi _A}| \le C/A$. It follows from $\left\langle {{{J'}_\varepsilon }(v_\varepsilon ^R),{\varphi _A}v_\varepsilon ^R} \right\rangle  = 0$ and \eqref{4.55} that
\[\begin{gathered}
  {\text{  }}\int_{{\mathbb{R}^3}\backslash {B_{2A}}(0)} {|\nabla v_\varepsilon ^R{|^2} + V(\varepsilon x)|v_\varepsilon ^R{|^2}}  \hfill \\
   \le \frac{C}
{A}\int_{{\mathbb{R}^3}\backslash {B_A}(0)} {|\nabla v_\varepsilon ^R{|^2}+|v_\varepsilon ^R{|^2}}  + \int_{{\mathbb{R}^3}\backslash {B_A}(0)} {\lambda {{(v_\varepsilon ^R)}^p} + {{(v_\varepsilon ^R)}^6}}  \hfill \\
   \le \frac{C}
{A}\int_{{\mathbb{R}^3}} {|\nabla v_\varepsilon ^R{|^2}+|v_\varepsilon ^R{|^2}}  + C(\varepsilon )\int_{{\mathbb{R}^3}\backslash {B_A}(0)} {{e^{ - C(\varepsilon )|x|}}}  \to 0{\text{ as }}A \to \infty , \hfill \\
\end{gathered} \]
i.e. for $\varepsilon  > 0$ small but fixed,
\begin{equation}\label{4.56}
\mathop {\lim }\limits_{A \to \infty } {\text{ }}\int_{{\mathbb{R}^3}\backslash {B_{2A}}(0)} {|\nabla v_\varepsilon ^R{|^2} + V(\varepsilon x)|v_\varepsilon ^R{|^2}}  = 0.
\end{equation}
Since $\{ v_\varepsilon ^R\} $ is bounded in ${H_\varepsilon }$, we can assume that as $R \to \infty $,
\[\left\{ \begin{gathered}
  v_\varepsilon ^R \rightharpoonup {v_\varepsilon }{\text{ in }}{H_\varepsilon }, \hfill \\
  v_\varepsilon ^R \to {v_\varepsilon }{\text{ in }}L_{{\text{loc}}}^s({\mathbb{R}^3}),{\text{ }}s \in [1,6), \hfill \\
  v_\varepsilon ^R \to {v_\varepsilon }{\text{ a}}{\text{.e}}{\text{.}} \hfill \\
\end{gathered}  \right.\]
By \eqref{4.56} and Sobolev's Imbedding Theorem, we get
\[
v_\varepsilon ^R \to {v_\varepsilon }{\text{ in }}{L^s}({\mathbb{R}^3}),{\text{ }}s \in [2,6){\text{ as }}R \to \infty .
\]
By \eqref{4.53}, we have
\[
v_\varepsilon ^R \to {v_\varepsilon }{\text{ in }}{L^s}({\mathbb{R}^3}),{\text{ }}s \in [2,6]{\text{ as }}R \to \infty .
\]
Using standard argument, we can prove the claim.

Hence, ${v_\varepsilon } \in {H_\varepsilon } \cap X_\varepsilon ^{{d_0}} \cap J_\varepsilon ^{{{\tilde c}_\varepsilon } + \varepsilon }$ is a nontrivial solution of
\[
 - \Delta u + V(\varepsilon x)u + {\phi _u}u + 4{\Bigl( {\int_{{\mathbb{R}^3}} {{\chi _\varepsilon }{u^2}} dx - 1} \Bigr)_ + }{\chi _\varepsilon }u = \lambda {u^{p - 1}} + {u^5}{\text{ in }}{\mathbb{R}^3}.
 \]
Since ${S_{{V_0}}}$ is compact in ${H^1}({\mathbb{R}^3})$, it is easy to see that $0 \notin X_\varepsilon ^{{d_0}}$ for ${d_0} > 0$ small. Thus ${v_\varepsilon } \ne 0$.

\noindent \textbf{Step 4}: For any sequence $\{ {\varepsilon _j}\} $ with ${\varepsilon _j} \to 0$, by Lemma~\ref{4.3.}(ii), there exist, up to a subsequence, $\{ {y_j}\} _{i = j}^\infty  \subset {\mathbb{R}^3}$, ${x_0} \in \mathcal{M}$, $U \in {S_{{V_0}}}$ such that
\begin{equation}\label{4.57}
\mathop {\lim }\limits_{j \to \infty } |{\varepsilon _j}{y_j} - {x_0}| = 0{\text{ and }}\mathop {\lim }\limits_{j \to \infty } {\left\| {{v_{{\varepsilon _j}}}(x) - \varphi ({\varepsilon _j}x - {\varepsilon _j}{y_j})U(x - {y_j})} \right\|_{{H_{{\varepsilon _j}}}}} = 0,
\end{equation}
which implies that as $j \to \infty $,
\[
{w_{{\varepsilon _j}}}(x): = {v_{{\varepsilon _j}}}(x + {y_j}) \to U(x){\text{ in }}{L^6}({\mathbb{R}^3}).
\]
By Lemma~\ref{2.4.} (ii), we get
\begin{equation}\label{4.58}
\mathop {\lim }\limits_{|x| \to \infty } {w_{{\varepsilon _j}}}(x) = 0{\text{ uniformly for all }}{\varepsilon _j}.
\end{equation}
Proceeding as in \cite{ly}, we get
\[
{w_{{\varepsilon _j}}}(x) \le {C_1}{e^{ - {C_2}|x|}},{\text{ }}x \in {\mathbb{R}^3},
\]
where $C_1$ and $C_2$ are independent of ${{\varepsilon _j}}$.

Thus
\[
\varepsilon _j^{ - 1}\int_{{\mathbb{R}^3}\backslash (\Lambda /{\varepsilon _j})} {v_{{\varepsilon _j}}^2(x)}  = \varepsilon _j^{ - 1}\int_{{\mathbb{R}^3}\backslash (\Lambda /{\varepsilon _j} - {y_j})} {w_{{\varepsilon _j}}^2(x)}  \le \varepsilon _j^{ - 1}\int_{{\mathbb{R}^3}\backslash{B_{\beta /{\varepsilon _j}}}(0)} {{{({C_1})}^2}{e^{ - 2{C_2}|x|}}}  \to 0,{\text{ as }}j \to \infty ,
\]
i.e. ${Q_{{\varepsilon _j}}}({v_{{\varepsilon _j}}}) = 0$ for ${{\varepsilon _j}}$ small. Therefore ${v_{{\varepsilon _j}}}$ is a solution of \eqref{4.1}. Set ${u_\varepsilon }(x) = {v_\varepsilon }( {\frac{x}
{\varepsilon }} )$, ${u_{{\varepsilon _j}}}$ is a solution of \eqref{1.1}.

Let ${P_j}$ be a maximum point of ${w_{{\varepsilon _j}}}$, similar to the arguments in Proposition~\ref{3.7.}, we can check that $\exists b > 0$ such that ${w_{{\varepsilon _j}}}({P_j}) > b$, then by \eqref{4.58}, $\{ {P_j}\} $ must be bounded.

Since ${u_{{\varepsilon _j}}}(x) = {w_{{\varepsilon _j}}}( {\frac{x}
{{{\varepsilon _j}}} - {y_j}} )$, ${x_j}: = {\varepsilon _j}{P_j} + {\varepsilon _j}{y_j}$ is a maximum point of ${u_{{\varepsilon _j}}}$. From \eqref{4.57}, ${x_j} \to {x_0} \in \mathcal{M}$ as $j \to \infty $. Since the sequence $\{ {\varepsilon _j}\}$ is arbitrary, we have obtained the existence and concentration results in Theorem~\ref{1.1.}.

To complete the proof, we only need to prove the exponential decay of ${u_\varepsilon }$. Since the proof is standard (see \cite{h,ly} for example), we omit it here.
\end{proof}

\end{document}